\tikzset{snakeit/.style={decorate, decoration={snake, amplitude=.2mm,segment length=1mm}}}
\tikzset{ext/.style={circle, draw,inner sep=1pt},int/.style={circle,draw,fill,inner sep=1.4pt},nil/.style={inner sep=1pt}}
\tikzset{cy/.style={circle,draw,fill,inner sep=2pt},scy/.style={circle,draw,inner sep=2pt},scyx/.style={draw,cross out,inner sep=2pt},scyt/.style={draw,regular polygon,regular polygon sides=3,inner sep=0.95pt}}
\tikzset{exte/.style={circle, draw,inner sep=3pt},inte/.style={circle,draw,fill,inner sep=3pt}}
\tikzset{diagram/.style={matrix of math nodes, row sep=3em, column sep=2.5em, text height=1.5ex, text depth=0.25ex}}
\tikzset{diagram2/.style={matrix of math nodes, row sep=0.5em, column sep=0.5em, text height=1.5ex, text depth=0.25ex}}
\newcommand{\snak}{
{\,
\begin{tikzpicture}[baseline=-.65ex,scale=.2]
 \draw (0,-.5) edge[snakeit] (0,.5);
\end{tikzpicture}
}
}
\newcommand{\lin}{
{\,
\begin{tikzpicture}[baseline=-.65ex,scale=.2]
 \draw (0,-.5) edge (0,.5);
\end{tikzpicture}
}
}
\newcommand{\dotte}{
{\,
\begin{tikzpicture}[baseline=-.65ex,scale=.2]
 \draw (0,-.5) edge[dotted] (0,.5);
\end{tikzpicture}
}
}
\newcommand{\nomul}{{
\begin{tikzpicture}[baseline=-.55ex,scale=.2]
 \node[circle,draw,fill,inner sep=.5pt] (a) at (0,0) {};
 \node[circle,draw,fill,inner sep=.5pt] (b) at (1,0) {};
 \draw (a) edge[bend left=25] (b);
 \draw (a) edge[bend right=25] (b);
 \draw (.2,-.5) -- (.8,.6);
\end{tikzpicture}}}
\newcommand{\notadp}{{
\begin{tikzpicture}[baseline=-.55ex,scale=.2, every loop/.style={}]
 \node[circle,draw,fill,inner sep=.5pt] (a) at (0,0) {};
 \draw (a) edge[loop] (a);
 \draw (-.2,-.2) -- (.2,.5);
\end{tikzpicture}}}
\theoremstyle{plain}
  \newtheorem{thm}{Theorem}
  \newtheorem{prop}{Proposition}
  \newtheorem{cor}[prop]{Corollary}
  \newtheorem{conjecture}{Conjecture}
  \newtheorem{lemma}{Lemma}
\theoremstyle{definition}
  \newtheorem{rem}{Remark}
\newcommand{\alg}[1]{\mathfrak{{#1}}}
\newcommand{\co}[2]{\left[{#1},{#2}\right]} 
\newcommand{\Hom}{\mathrm{Hom}}
\newcommand{\K}{{\mathbb{K}}}
\newcommand{\Z}{{\mathbb{Z}}}
\newcommand{\N}{{\mathbb{N}}}
\newcommand{\Graphs}{{\mathsf{Graphs}}}
\newcommand{\Gra}{{\mathsf{Gra}}}
\newcommand{\mF}{\mathcal{F}}
\newcommand{\mG}{\mathcal{G}}
\newcommand{\Tw}{\mathit{Tw}}
\newcommand{\Def}{\mathrm{Def}}
\newcommand{\FreeLie}{\mathrm{FreeLie}}
\newcommand{\Id}{\mathrm{Id}}
\newcommand{\Lie}{\mathsf{Lie}}
\newcommand{\hoLie}{\mathsf{hoLie}}
\newcommand{\fGC}{\mathrm{fGC}}
\newcommand{\Ass}{\mathsf{Assoc}}
\newcommand{\Com}{\mathsf{Com}}
\newcommand{\bpm}{\begin{pmatrix}}
\newcommand{\epm}{\end{pmatrix}}
\newcommand{\GC}{\mathrm{GC}}
\newcommand{\fGCcdu}{{}^*\fGCc}
\newcommand{\fGCc}{\mathrm{fGCc}}
\newcommand{\WG}{WG}
\newcommand{\bbS}{\mathbb{S}}
\DeclareMathOperator{\gr}{gr}
\newcommand{\gra}{\mathit{gra}}
\newcommand{\grt}{\alg {grt}}
\newcommand{\Poiss}{\mathsf{Poiss}}
\newcommand{\hoPoiss}{\Poiss_{\infty}}
\newcommand{\vdim}{\mathit {dim}}
\begin{document}
\title{Differentials on graph complexes}

\author{Anton Khoroshkin}
\address{1004 Faculty of mathematics HSE, 7 Vavilova street, Moscow, Russia, 115280}
\email{akhoroshkin@hse.ru}

\author{Thomas Willwacher}
\address{Institute of Mathematics\\ University of Zurich\\ Winterthurerstrasse 190 \\ 8057 Zurich, Switzerland}
\email{thomas.willwacher@math.uzh.ch}

\author{Marko \v Zivkovi\' c}
\address{Institute of Mathematics\\ University of Zurich\\ Winterthurerstrasse 190 \\ 8057 Zurich, Switzerland}
\email{the.zivac@gmail.com}

\thanks{
A.K. has been partially supported by RFBR grants 13-02-00478, 13-01-12401, 
by "The National Research University--Higher School of Economics" Academic Fund Program in 2013-2014,
research grant 14-01-0124, by Dynasty foundation and Simons-IUM fellowship.
T.W. and M.\v Z. have been partially supported by the Swiss National Science foundation, grant 200021\_150012, and the SwissMAP NCCR funded by the Swiss National Science foundation.
}


\begin{abstract}
We study the cohomology of complexes of ordinary (non-decorated) graphs, introduced by M. Kontsevich.
We construct spectral sequences converging to zero whose first page contains the graph cohomology. 
In particular, these series may be used to show the existence of an infinite series of previously unknown and provably non-trivial cohomology classes, and put constraints on the structure of the graph cohomology as a whole.
\end{abstract}

\maketitle

\section{Introduction}

Graph complexes are graded vector spaces of formal linear combinations of isomorphism classes of graphs, with the differential defined by edge contraction (or, dually, vertex splitting).
The graph cohomology is the cohomology of these complexes. 
Various versions of graph complexes exist, for various types of graphs: ribbon graphs \cite{penner}, ordinary graphs \cite{K3,K4,K5}, directed acyclic graphs \cite{gcor}, graphs with external legs \cite{AroneLV,Turchin2,Turchin3} etc. 
The various graph cohomology theories are arguably some of the most fascinating objects in homological algebra. 
They have an elementary and simple combinatorial definition, and each of them plays a central role in a subfield of homological algebra or algebraic topology.
Yet, we know almost nothing about what the graph cohomology actually is, and even conjectures about its global structure are currently out of reach. 

In this paper, we will consider the most basic graph complexes, formed by ordinary graphs, without external legs or other decoration, introduced by M. Kontsevich.
Due to sign and degree conventions these complexes come in versions $\GC_d$ (a precise definition will be given below), where $d$ ranges over integers. Physically, $\GC_d$ is formed by vacuum Feynman diagrams of a topological field theory in dimension $d$. Alternatively, $\GC_d$ governs the deformation theory of the $E_d$ operads in algebraic topology \cite{grt}.

Due to an additional grading by loop order, the graph complexes $\GC_d$ for various $d$ of the same parity are isomorphic, up to degree shifts. Hence it suffices to study one of these complexes for $d$ even and one for $d$ odd, most often one takes $\GC_2$ and $\GC_3$.
Our current knowledge about the cohomology $H(\GC_2)$ and $H(\GC_3)$ is summarized as follows.
\begin{itemize}
\item It has been shown \cite{grt} that $H^{\leq -1}(\GC_2)=0$, while $H^{0}(\GC_2)=\grt_1$ is the Grothendieck-Teichm\"uller Lie algebra. It has recently been shown by F. Brown \cite{brown} that $\grt_1$ contains a free Lie algebra generated by generators $\sigma_3,\sigma_5,\sigma_7,\dots$. The Deligne-Drinfeld-Ihara conjecture in fact states that $\grt_1$ is equal to this free Lie algebra, up to completion. It is furthermore another famous conjecture due to Kontsevich and Drinfeld that $H^1(\GC_2)=0$. 
Apart from the cohomology in degree 0, computer experiments \cite{barnatanmk} have shown that there are sporadic classes in degrees $\geq 3$.  The computer generated data for $H^{\bullet}(\GC_2)$ may be found in Table \ref{tbl:evencanceling} on page~\pageref{tbl:evencanceling} below.
\item The odd version of the graph cohomology $H(\GC_3)$ has also been widely studied for its role in knot theory and finite type invariants. The best understood degree is the top degree $H^{-3}(\GC_3)$, see, e.g., \cite{barnatan}.\footnote{In some other conventions the top degree is placed in degree 0, while we prefer -3 for reasons explained below.} $H^{-3}(\GC_3)$ can be shown to be a commutative algebra. It is known that many non-trivial classes exist in $H^{-3}(\GC_3)$, provided by Chern-Simons theory.
It is a conjecture due to P. Vogel \cite{vogel} building upon work by J. Kneissler \cite{kneissler2,kneissler} that $H^{-3}(\GC_3)$ is generated as a commutative algebra by certain elements $t,\omega_0, \omega_1, \dots$, with relations $\omega_p\omega_q=\omega_0\omega_{p+q}$ and $P=0$, for a single (explicitly known) polynomial $P$.
Apart from the "dominant" degree $-3$ computer experiments \cite{barnatanmk} have shown that there are sporadic classes in degree $-6$.
The computer generated data for $H^{\bullet}(\GC_3)$ may be found in Table \ref{tbl:oddcanceling} on page~\pageref{tbl:oddcanceling} below.
\item There are upper and lower degree bounds on the graph cohomology in each loop order. Concretely, one has $H^{\leq -1}(\GC_2)=0$, and in loop order $l$ the cohomology of degrees $\geq l-2$ vanishes. Similarly, one can check that $H^{\geq -2}(\GC_3)=0$ and that in loop order $l$ the cohomology of degrees $\leq -l-2$ vanishes. 
\end{itemize}

The main result of the present paper is as follows. We choose to formulate it for $\GC_0$ and $\GC_1$ instead of $\GC_2$ and $\GC_3$ for degree issues. However, note that since $\GC_d$ is isomorphic to $\GC_{d+2}$ up to degree shifts, the result applies equally well to each even or odd $d$, again up to some degree shifts.
\begin{thm}\label{thm:main}
For each $d=0,1$, there is a spectral sequence, whose first page is
\[
E^1 = H(\GC_d) \oplus \prod_{\substack{k\geq 3 \\ k\equiv 2d+1 \text{ mod }4}} L_k
\]
with $L_k$ (the "loop classes") living in degree $k-d$, converging to $0$ for $d=0$ and to $\K[-1]$ for $d=1$. 
\end{thm}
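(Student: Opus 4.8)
The plan is to realize the spectral sequence as one of the two spectral sequences of a bicomplex built on the \emph{full} graph complex $\fGC_d$, in which vertices of valence $1$ and $2$ are permitted. The first differential is the usual one, $\delta$ (vertex splitting), which preserves the loop order; the second, $\nabla$, is an edge-creating operator raising the loop order by one. The restriction to $d=0,1$ is exactly what makes $\nabla$ homogeneous of degree $+1$: for $d=0$ the degree of a graph equals its number of edges, so ``add an edge'' has degree $+1$, while for $d=1$ the degree equals the number of vertices minus one, so the degree-raising loop operator must simultaneously introduce a bivalent vertex (one new vertex with two new edges). First I would verify the relations $\delta^2=\nabla^2=\delta\nabla+\nabla\delta=0$, so that $(\fGC_d,\delta+\nabla)$ is a genuine complex, and then pass to the completion $\prod_g$ over loop orders $g$, so that the increasing loop-order filtration is complete and its spectral sequence converges; this completion is the source of the direct \emph{product} in the statement.

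The $E^1$ page is the cohomology with respect to $\delta$ alone, and here I would invoke the known computation of $H(\fGC_d,\delta)$. The valence-$1$ part is acyclic via the standard homotopy removing antennae; chains of bivalent vertices between trivalent-or-higher vertices contract to recover the genuine graph cohomology $H(\GC_d)$; and the free-standing all-bivalent cycles $C_k$ contribute the remaining classes. A direct analysis of the cyclic-rotation and reflection symmetries of $C_k$ shows it is nonzero precisely when $k\equiv 2d+1 \pmod 4$, with degree $k-d$. This yields
\[
E^1 = H(\fGC_d,\delta) = H(\GC_d)\oplus \prod_{\substack{k\geq 3\\ k\equiv 2d+1\ (4)}} L_k ,
\]
with $L_k$ one-dimensional, spanned by the loop graph $C_k$.

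It remains to identify the abutment $H(\fGC_d,\delta+\nabla)$, and this is the heart of the argument. The idea is to compute the \emph{same} total cohomology through the complementary filtration, whose first step is $H(\fGC_d,\nabla)$. For $d=0$ the operator $\nabla$ fixes the vertex set, so I would filter by the number of vertices and, on each fixed vertex set, recognize $\nabla$ as a Koszul differential: the edges are odd variables and $\nabla$ is the wedge product with the sum $\omega=\sum_{i<j}e_{ij}$ of all edges, whose Koszul complex is acyclic whenever $\omega\neq0$. After accounting for tadpoles, connectedness, and the passage to $\bbS_n$-(co)invariants, the only potential survivor is a single residual class; a sign computation shows it vanishes for $d=0$ and persists in degree $-1$ for $d=1$, giving the abutments $0$ and $\K[-1]$. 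Since the two spectral sequences of the bicomplex converge to the same $H(\fGC_d,\delta+\nabla)$, this fixes the abutment of the spectral sequence in the statement.

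The main obstacle is the abutment computation of the last paragraph. The Koszul acyclicity is transparent on a fixed vertex set, but three points need genuine care: isolating the residual class and computing its sign (this is where the clean $0$ versus $\K[-1]$ dichotomy between the two parities is produced); handling the connectedness constraint, so that the Koszul argument survives the restriction to connected graphs; and treating $d=1$, where $\nabla$ is not a pure edge-addition and the relevant operator is not literally a Koszul differential, so it must be analyzed by an analogous, parity-twisted homotopy. Verifying that $\delta$ and $\nabla$ anticommute, together with the degree bookkeeping that forces $d\in\{0,1\}$, is routine but must be carried out carefully to license the whole bicomplex setup.
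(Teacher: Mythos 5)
Your even case ($d=0$) is essentially the paper's argument (Section \ref{sec:extradiffev}): $\nabla$ is the bracket with the tadpole, $\delta$ and $\nabla$ form a bicomplex, and the two filtrations (by vertices and by Betti number) give the acyclicity of the total complex and the spectral sequence of the statement; the "Koszul" contraction you describe is exactly the homotopy $\tilde h$ in Proposition \ref{prop:evenD}. One correction there: on the connected complex the $\nabla$-cohomology is \emph{two}-dimensional (the point and the single edge, Corollary \ref{cor:evenex} -- and proving this from the disconnected Koszul computation requires a genuine induction on components), and the abutment vanishes because $\delta$ cancels these two classes against each other on the next page, not because a sign kills a single residual class.

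The odd case is where your proposal breaks down. There is no homogeneous, square-zero operator on $\fGC_1$ that raises the loop order by exactly one and anticommutes with $\delta$. Your candidate (attach a new bivalent vertex to a pair of existing vertices) fails to anticommute with $\delta$: splitting the newly created bivalent vertex produces chains of two bivalent vertices and antenna terms that have no counterpart in $\nabla\delta$. Structurally, a bracket-type $\nabla=[\gamma,\cdot]$ with these properties would require a degree-one, one-loop, $\delta$-closed element of $\fGC_1$, i.e.\ a two-vertex double edge, which vanishes by the edge-reversal symmetry. What the paper actually does is twist $\delta$ by the Maurer--Cartan element $m=\sum_{j\ge1}\tfrac{1}{(2j+1)!}\,X_{2j+1}$ built from odd multiple edges (graphically, the Moyal product). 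This extra term raises the loop order only by \emph{even} amounts $2j$, it is not itself a differential ($[m,\cdot]^2\neq0$; only the Maurer--Cartan equation $\delta m+\tfrac12[m,m]=0$ holds, so there is no bicomplex), and consequently all odd-page differentials of the resulting spectral sequence vanish -- a feature visibly incompatible with a loop-degree-one $\nabla$. Finally, the abutment is not reachable by a "parity-twisted Koszul homotopy": computing $H(\fGCc_1,\delta+[m,\cdot])=\K c$, with $c=\sum_j\tfrac{j}{(2j+1)!}X_{2j+1}$ an infinite series of multiple-edge graphs sitting in degree $+1$ (not $-1$), occupies all of Section \ref{sec:extradiffodd} via the auxiliary dotted and waved complexes (equivalently, the rigidity of the associative operad, Appendix \ref{app:altproof}). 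As the paper emphasizes, the two parities require genuinely different constructions, and the uniform bicomplex you propose does not exist for $d=1$.
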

In an enlarged complex $\GC_d^2$, the loop classes $L_k$ may be represented by graphs 
\[
L_k
=
\begin{tikzpicture}[baseline=-.65ex, scale=.75]
\node[int] (v1) at (0:1) {};
\node[int] (v2) at (72:1) {};
\node[int] (v3) at (144:1) {};
\node[int] (v4) at (216:1) {};
\node (v5) at (-72:1) {$\cdots$};
\draw (v1) edge (v2) edge (v5) (v3) edge (v2) edge (v4) (v4) edge (v5);
\end{tikzpicture}
\quad \quad \quad \quad \quad \quad \text{($k$ vertices and $k$ edges)}
\]

Theorem \ref{thm:main} in particular implies that classes in the graph cohomology come ``in pairs''\footnote{Note that we are a little sloppy here. Of course, there are not actually pairs in the sense of two well defined elements, but rather pairs of degree/loop order combinations between which a non-trivial differential exists on some page of the spectral sequence.}, with one class killing the other on some page of the spectral sequence. 
Curiously, we show in Proposition \ref{prop:wheelkills} below that the "partners" of the loop graphs $L_{4k+1}$ in $H(\GC_2)$ are exactly the classes $\sigma_{2k+1}$ conjecturally generating $\grt_1=H^0(\GC_2)$.
We conjecture that the "partners" of the loop graphs $L_{4k-1}$ in $H(\GC_3)$ are similarly variants of the conjectural generators of $H^{-3}(\GC_3)$ from above, cf. Conjecture \ref{conj:wheelkills}.
Degree considerations show that the "partners" of the remaining classes in the dominant degrees $H^0(\GC_2)$ and $H^{-3}(\GC_3)$ have to live in degrees $\geq 1$ or, respectively, degrees $\leq -4$.
We hence arrive at the following corollary, which is the first result to establish the existence of an infinite family of provably non-trivial graph cohomology classes in degrees other then the dominant ones.
 
\begin{cor}
The classes in $H^{\geq 1}(\GC_2)$ span a vector space of infinite dimension. More concretely, the "partners"\footnote{notation as above} of a basis of 
\[
\FreeLie(\sigma_3, \sigma_5,\sigma_7,\dots) / \mathit{span}(\sigma_3, \sigma_5,\dots) \subset H^0(\GC_2) / \mathit{span}(\sigma_3, \sigma_5,\dots)
\]
form a linearly independent set of infinite cardinality in $H^{\geq 1}(\GC_2)$.

Similarly, the classes in $H^{\leq -4}(\GC_3)$ span an infinite dimensional vector space.
\end{cor}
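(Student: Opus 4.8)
The plan is to read the corollary off the spectral sequence of Theorem \ref{thm:main}, using Proposition \ref{prop:wheelkills} to pin down exactly which classes the loop classes annihilate, and then a parity-and-dimension bookkeeping to produce infinitely many independent survivors. I treat $\GC_2$ first. The key gradings are as follows. Since the identification $\GC_d\cong\GC_{d+2}$ shifts the degree of a loop-order-$b$ graph by an even amount, the total degree of the complex underlying the spectral sequence agrees, modulo $2$, with the $\GC_2$-degree; as the differentials $d_r$ raise the total degree by $1$, they flip this parity, and hence can only pair an even-degree class with an odd-degree one. Combined with $H^{\leq -1}(\GC_2)=0$, this forces the "partner" of any class in the even part $H^0(\GC_2)$ to be either one of the (odd-degree) loop classes or a class in odd $\GC_2$-degree $\geq 1$, i.e. in $H^{\geq 1}(\GC_2)$.

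Next I feed in the two structural inputs. By F.~Brown's theorem \cite{brown} we have $\FreeLie(\sigma_3,\sigma_5,\dots)\subseteq\grt_1=H^0(\GC_2)$, and the commutator subalgebra of a free Lie algebra on more than one generator is infinite-dimensional, so $Q:=\FreeLie(\sigma_3,\sigma_5,\dots)/\mathit{span}(\sigma_3,\sigma_5,\dots)$ is infinite-dimensional. By Proposition \ref{prop:wheelkills} each loop class $L_{4k+1}$ is paired in the spectral sequence precisely with the generator $\sigma_{2k+1}$; since these assignments exhaust both the loop classes and the generators, the loop classes annihilate exactly $S:=\mathit{span}(\sigma_3,\sigma_5,\dots)$ and touch nothing else of $H^0(\GC_2)$. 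In particular no representative of an element of $Q$ is paired with a loop class.

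The step I expect to be the main obstacle is to upgrade "every bracket has a partner in $H^{\geq 1}(\GC_2)$" to "infinitely many linearly independent such partners": because different classes may die on different pages, the partner assignment is not literally one linear map, so I would argue by a dimension balance instead. Since the spectral sequence converges to $0$, its even and odd parts cancel completely. The loop classes lie in the odd part, so removing the perfectly matched pair $(\text{loop classes})\leftrightarrow S$ leaves the even part $H^{\mathrm{even}}(\GC_2)/S$ (which contains $Q$) cancelling entirely against the odd part $H^{\mathrm{odd}}(\GC_2)=H^{\geq 1}(\GC_2)\cap H^{\mathrm{odd}}$. Tracking the ranks of the $d_r$ across the pages then yields $\dim H^{\mathrm{odd}}(\GC_2)\geq\dim\bigl(H^{\mathrm{even}}(\GC_2)/S\bigr)\geq\dim Q=\infty$, which is the asserted infinitude. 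Carrying out this bookkeeping cleanly in the graded, a priori infinite-dimensional setting — and verifying that the partners genuinely assemble into a family indexed by a basis of $Q$ — is the delicate point.

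For $\GC_3$ the same scheme applies with $H^{-3}(\GC_3)$ replacing $H^0(\GC_2)$, with $H^{\leq -4}(\GC_3)$ replacing $H^{\geq 1}(\GC_2)$, with $H^{\geq -2}(\GC_3)=0$ playing the role of $H^{\leq -1}(\GC_2)=0$, and with the limit $\K[-1]$ a harmless one-dimensional correction. The difference is that the analogue of Proposition \ref{prop:wheelkills} is here only Conjecture \ref{conj:wheelkills}, so I cannot subtract the loop-class pairings on the nose. Instead I would exploit that all loop classes have loop order $1$ and sit in distinct degrees: the degree/loop-order matching shows that a degree-$(-3)$ class at an \emph{odd} loop order can never be the partner of a loop class (the required partner degree is $\equiv 2\bmod 4$, where no loop class exists). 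Since $H^{-3}(\GC_3)$ is provably infinite-dimensional and nonzero at infinitely many loop orders — including odd ones, by the known Chern--Simons / trivalent-diagram classes — infinitely many degree-$(-3)$ classes survive any loop-class cancellation and, by the parity argument above, must pair into $H^{\leq -4}(\GC_3)$, which is therefore infinite-dimensional.
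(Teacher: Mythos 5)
Your treatment of the even case is correct and is essentially the paper's own argument: the parity observation is a repackaging of the degree count showing that a class of $H^0(\GC_2)$ hit on page $E^r$ must be hit from degree $2r-1\geq 1$, Proposition \ref{prop:wheelkills} accounts for the loop classes, and the page-by-page rank bookkeeping is exactly how the paper arrives at Corollary \ref{cor:evendimcor}.

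The odd case, however, has a genuine gap in its final step. First, the parity analysis is inverted. A degree-$(-3)$ class with $n$ vertices is trivalent, so $b=e-v=n/2$ and its loop order is $n/2+1$; the only loop class that could hit it in the spectral sequence is $L_{n-1}$ (it must come from $b=0$), which exists precisely when $n-1\equiv 3\bmod 4$, i.e.\ when $n\equiv 0\bmod 4$, i.e.\ when the loop order $n/2+1$ is \emph{odd}. Hence it is the degree-$(-3)$ classes of \emph{even} loop order ($n\equiv 2\bmod 4$) that are immune to loop-class cancellation; your parenthetical computation (required partner degree $\equiv 2\bmod 4$) is consistent with this, but the sentence containing it asserts the opposite parity. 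Second, and more seriously, the classes you invoke as provably nontrivial at the safe loop orders are exactly the wrong ones: Vogel's $t,\omega_0,\omega_1,\dots$ sit at $n=4,8,12,\dots\equiv 0\bmod 4$, precisely the loop orders where cancellation by $L_3,L_7,\dots$ is possible --- and the paper points out that $L_3$ really does kill $t$, and conjectures the same for all $L_{4j+3}$. So the claim that infinitely many degree-$(-3)$ classes survive any loop-class cancellation is not established by the input you cite. To repair this, either observe that each loop class spans a one-dimensional bidegree and can therefore absorb at most one dimension of $H^{-3}(\GC_3)_n$ for each $n$, so that it suffices to know $h_n:=\dim H^{-3}(\GC_3)_n\geq 2$ for infinitely many $n$, or exhibit provably nontrivial classes at $n\equiv 2\bmod 4$ (e.g.\ suitable powers of $t$, nontrivial by Lie-algebra weight systems); either way an explicit nontriviality input beyond the bare list $t,\omega_0,\omega_1,\dots$ must be supplied. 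This is in substance what the paper's Corollary \ref{cor:odddimcor} does: it reduces the statement to a lower bound on $h_n$ rather than attempting to sidestep the loop classes by a parity selection.
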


{\bf Remark:} The mechanism sketched above, i.e., one dominant degree of conjecturally known form, plus the "partners" in the spectral sequence yields, for the first time, a picture of the graph cohomology explaining all known classes in the computer accessible regime.
We note however that at high loop orders this mechanism does not provide enough classes to account for the Euler characteristic computations of \cite{eulerchar}. Very interestingly, and to a complete surprise of the authors, these Euler characteristics ``grow too fast'' in high loop order and, to an even larger surprise, are (again at high loop order) almost identical in the even and odd $d$ cases. This suggests that on top of the mechanism above, there is an additional source of classes, and this source is the same in the even and odd $d$ cases. 
This is completely unexpected, since there is otherwise no known link between $H(\GC_2)$ and $H(\GC_3)$.

Finally, let us remark that even though Theorem \ref{thm:main} states the cases of even and odd $d$ together, the nature and construction of the spectral sequences leading to the result are drastically different in the two cases. Hence, the treatment of even and odd $d$ is done in separate sections each, and completely separate proofs have to be given.

\begin{rem}
This paper is the first in a series of two papers.
 The second paper will discuss the extension of the additional differentials introduced in the present work to the ``hairy'' graph complexes studied in \cite{LambrechtsTurchin, Turchin1}.
\end{rem}

\subsection{Structure of the paper}
In section \ref{sec:grcomplexes} we recall the definitions of the graph complexes we study in this paper.
Sections \ref{sec:extradiffev} and \ref{sec:extradiffodd} contain a derivation of the spectral sequences mentioned in the introduction and the proof of Theorem \ref{thm:main}, separately for the cases of even and odd $d$.

In Appendix \ref{app:altproof} we sketch an alternative and more conceptual argument for the main Theorem in the case of odd $d$, using the rigidity of the associative operad.
Appendix \ref{app:7loop} contains an explicit calculation of the graph cohomology class cancelled by the loop graph of length 7 in the spectral sequence. Finally, in Appendix \ref{app:specseq} we collect some remarks and auxiliary results about the convergence of spectral sequences used elsewhere in the paper.

\subsection*{Acknowledgements}
We thank B. Feigin, N. Markarian and S. Merkulov for stimulating discussions.

\section*{Notation}
We work over a fixed ground field $\K$ of characteristic zero. All vector spaces or graded vector spaces are taken over this ground field.
The vector space freely generated by a set $S$ we denote by $\K\langle S\rangle$.
The phrase \emph{differential graded} will be abbreviated to \emph{dg} as usual.
In general, we work in cohomological conventions, i.e., the differentials will have degree 1 unless otherwise stated.
The symmetric groups will be denoted by $S_n$.

\section{Graph complexes}\label{sec:grcomplexes}

Let $\gra_{N,k}$ be the set of directed graphs with vertex set $\{1,\dots,N\}$ and directed edge set $\{1,\dots, k\}$. This set carries a natural right action of the group $S_N\times S_k\ltimes (S_2)^k$, with $S_N$ acting by relabeling the vertices, $S_k$ by relabeling the edges and the $S_2$ factors by reversing the direction of the edges.
One may build right $S_N$ modules
\[
 \Gra_n(N) = \prod_{k} \left( \K\langle \gra_{N,k} \rangle \otimes \K[n-1]^{\otimes k}\right)_{S_k\ltimes (S_2)^k}
\]
where the action of $S_k$ is induced by the action on $\gra_{N,k}$ if $n$ is odd, and defined with an extra sign if $n$ is even, and the $S_2$ actions are induced by the actions on $\gra_{N,k}$ if $n$ is even and defined with an extra sign if $n$ is odd.
The spaces $\Gra_n(N)$ assemble into an operad $\Gra_n$. The operadic composition is by inserting one graph at a vertex of another and reconnecting the dangling edges in all possible ways, cf. the introductory sections of \cite{grt}. 

The total space of any operad, and the invariants or coinvariants under the symmetric group actions thereof form a dg Lie algebra, cf. \cite{lodayval}. In particular, the spaces 
\[
 \fGC_n := \prod_N \Gra_n\{-n\}(N)_{S_N}
\]
form a graded Lie algebra. Its elements are series of graphs with unidentifiable vertices, i.e., of isomorphism classes of graphs.
The Lie bracket of two homogeneous elements $\gamma_1\in \Gra_n\{-n\}(N_1)_{S_{N_1}}\subset \fGC_n$ and $\gamma_2\in \Gra_n\{-n\}(N_2)_{S_{N_2}}\subset \fGC_n$ is 
\[
 [\gamma_1,\gamma_2] = \gamma_1\bullet\gamma_2 - (-1)^{|\gamma_1||\gamma_2|}\gamma_2\bullet \gamma_1
\]
where the pre-Lie product $\bullet$ is defined such that 
\[
 \gamma_1\bullet\gamma_2 = \sum_{j=1}^{N_1} \pm \gamma_1 \circ_j \gamma_2
  = \sum_{x\in V(\gamma_1)}
\begin{tikzpicture}[baseline=-.65ex]
 \node (v) at (-.4,0) {$\gamma_1$};
 \node[ext] (w) at (.4,0) {$\gamma_2$};
 \node at (.6,-.2) {$\scriptstyle x$};
 \draw (v) edge (w);
 \draw (-.2,-.1) edge (w);
 \draw (-.2,.1) edge (w);
\end{tikzpicture}.
\]
Here $V(\gamma_1)$ is the set of vertices in graphs in $\gamma_1$, and every vertex $x\in V(\gamma_1)$ is replaced by the whole graph $\gamma_2$ and 
\begin{tikzpicture}[baseline=-.65ex]
 \node (v) at (-.4,0) {$\gamma_1$};
 \node[ext] (w) at (.4,0) {$\gamma_2$};
 \node at (.6,-.2) {$\scriptstyle x$};
 \draw (v) edge (w);
 \draw (-.2,-.1) edge (w);
 \draw (-.2,.1) edge (w);
\end{tikzpicture}
denotes the sum of all possible ways edges from $\gamma_1$ connected to $x$ can be reconnected to vertices of $\gamma_2$. 

In pictures we will denote elements of $\fGC_n$ by graphs with black (unidentifiable) vertices.
There is a Maurer-Cartan element 
\[
m=
 \begin{tikzpicture}[baseline=-.65ex]
  \node[int] (v) at (0,0) {};
  \node[int] (w) at (0.5,0) {};
  \draw (v) edge (w);
 \end{tikzpicture},
\]
i.e., $[m,m]=0$ and $|m|=1$. (It stems from the operad map $\Lie\{n-1\}\to \Gra_n$.) We will consider $\fGC_n$ as a complex with differential 
\begin{equation}\label{equ:deltadef}
 \delta=[m, \cdot].
\end{equation}


We also define several dg Lie subalgebra of $\fGC_n$. The linear span of the connected graphs we denote by $\fGCc_n\subset \fGC_n$.
The graph complex $\GC_n$, as originally defined by Kontsevich, is the dg Lie subalgebra $\GC_n\subset \fGCc_n$ spanned by the graphs all of whose vertices are at least trivalent.

Since the signs in the above definition only depend on the parity of $n$, the graph complexes $\fGC_n$ for various $n$ of the same parity are isomorphic, up to unimportant shifts of degrees. Hence, to obtain a complete understanding, it suffices to consider $\fGC_n$ for one even and one odd $n$.
In the following sections we will consider the cases $n=0,1$. In the first (even) case the (cohomological) degree of vertices is 0, and that of the edges is -1. In the second (odd) case the degree of the vertices is 1, while that of the edges is 0.

%

\section{The extra differential (even case)}\label{sec:extradiffev}

We define an additional differential on $\fGC_0$ by the Lie bracket with the ``tadpole'' graph
\[
 \nabla=[
\begin{tikzpicture}[baseline=-.65ex, scale=.5, every loop/.style={}]
 \node[int] (v) at (0,0){};
\draw (v) edge[loop] (v);
\end{tikzpicture}
,\cdot].
\]
The effect of $\nabla$ is to add an additional edge, in all possible ways. One easily checks that $\delta+\nabla$ is also a differential, because $\delta\nabla+\nabla\delta=0$ and $\nabla^2=0$.

In this section we consider the sub-complex $\fGC_0^\notadp\subset\fGC_0$ spanned by graphs without tadpoles. It is a sub-complex with respect to both differentials $\delta$ and $\delta+\nabla$. On the level of cohomology (with respect to $\delta$) $H(\fGC_0^\notadp)$ and $H(\fGC_0)$ differ by only one class, the single vertex tadpole (see \cite[Proposition 3.4]{grt}). We will also consider the connected parts $\fGCc_0$ and $\fGCc_0^\notadp$.


\subsection{Cohomology of the extra differential}
\begin{prop}\label{prop:evenD}
 $H(\fGC_0^\notadp, \nabla)$ is one-dimensional, the class being represented by the single vertex graph.
\end{prop}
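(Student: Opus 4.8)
The plan is to compute the cohomology separately for each fixed number of vertices and then assemble the answer. First I would make the operator $\nabla$ fully explicit: up to an overall sign, and modulo tadpoles, $\nabla$ sends a graph $\gamma$ to the sum over all pairs of distinct vertices $\{u,v\}$ of the graph obtained by joining $u$ and $v$ with a new edge. Since for $n=0$ the edges are odd (antisymmetric under reordering), any term in which the added edge would be parallel to an already-present edge vanishes, so $\nabla$ really adds an edge only between pairs of currently non-adjacent vertices. Because $\nabla$ preserves the number of vertices $N$ and only raises the edge number, the complex $(\fGC_0^\notadp,\nabla)$ decomposes as a product over $N\ge 1$ of subcomplexes supported on graphs with exactly $N$ vertices.

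Next I would identify each fixed-$N$ piece, before passing to $S_N$-coinvariants, with a Koszul complex. Let $W_N = \K\langle e_{ij}\mid 1\le i<j\le N\rangle$ be the $\binom{N}{2}$-dimensional space spanned by the potential edges of the complete graph on the labelled vertex set $\{1,\dots,N\}$. Because edges are odd and double edges vanish, tadpole-free graphs on $N$ labelled vertices are in sign-respecting bijection with the monomials of the exterior algebra $\Lambda^\bullet W_N$, a graph with edge set $E$ corresponding to $\bigwedge_{e\in E} e$. Under this identification the description of $\nabla$ above is exactly left multiplication by the fixed element $\omega_N := \sum_{1\le i<j\le N} e_{ij}\in W_N = \Lambda^1 W_N$, that is $\nabla = \omega_N \wedge (-)$; the vanishing of double edges is automatic from $e\wedge e = 0$.

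The cohomology is then read off from a standard contraction homotopy. For $N\ge 2$ one has $\omega_N\neq 0$; choosing the $S_N$-invariant functional $\phi\in W_N^*$ with $\phi(e_{ij})=1/\binom{N}{2}$ for all $i<j$, so that $\phi(\omega_N)=1$, the contraction $\iota_\phi$ satisfies the Clifford-type identity $\omega_N\wedge\iota_\phi(-) + \iota_\phi\bigl(\omega_N\wedge (-)\bigr) = \phi(\omega_N)\,\mathrm{id} = \mathrm{id}$. Hence the identity is null-homotopic and $(\Lambda^\bullet W_N,\nabla)$ is acyclic. For $N=1$ there are no edges, $\Lambda^\bullet W_1 = \K$, $\nabla = 0$, and the cohomology is the one-dimensional space spanned by the single vertex graph. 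Finally, since $\phi$ and hence $\iota_\phi$ are $S_N$-equivariant, the homotopy descends to the $S_N$-coinvariants, so each $N\ge 2$ summand of $\fGC_0^\notadp$ remains acyclic; equivalently one may simply invoke the exactness of $(-)_{S_N}$ in characteristic zero to commute cohomology with coinvariants. Collecting contributions over all $N\ge 1$ leaves only the $N=1$ class, which proves $H(\fGC_0^\notadp,\nabla)=\K$.

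The one place where an error could hide is the sign bookkeeping in the identification of graphs with exterior monomials and the verification that $\nabla$ is \emph{precisely} $\omega_N\wedge(-)$ with the correct overall sign; once that normalization is pinned down, the Koszul acyclicity and the passage to coinvariants are routine.
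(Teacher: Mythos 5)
Your argument is correct and is essentially the paper's own (second) proof: your contraction $\iota_\phi$ is exactly $\tfrac{1}{\binom{N}{2}}$ times the homotopy $\tilde h\Gamma=\sum_e\pm(\Gamma-\{e\})$ given there, and the reduction to a fixed number of vertices together with the passage through $S_N$-coinvariants in characteristic zero is identical. The Koszul/exterior-algebra packaging is a clean reformulation of that symmetric homotopy (the paper's primary homotopy, removing the edge between vertices $1$ and $2$ when present, is even shorter), and your flagged sign issue is harmless since the contraction identity holds for either normalization of where the new edge is placed in the edge ordering.
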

\begin{proof}
We have (by definition) $\fGC_0^\notadp=\prod_n (V_n)_{S_n}$, where $V_n$ is a graded vector space of linear combinations of graphs with $n$ numbered vertices.
The space $V_n$ is finite dimensional and a $\nabla$ acts on each $V_n$ separately.
Since taking (co)invariants with respect to a finite group action commutes with taking cohomology, it suffices to 
calculate $H(V_n, \nabla)$. For $n=1$ it is 1-dimensional. For $n\geq 2$ there is an explicit homotopy $h:V_n\to V_n$:
\[
 h \Gamma =
\begin{cases}
 \pm(\Gamma - \{1,2\}) & \text{if there is an edge between vertices 1 and 2} \\
0 & \text{otherwise},
\end{cases}
\]
where the sign is chosen $+$ if $(1,2)$ is the first edge in the implicit ordering. We find that $\nabla \circ h+h\circ\nabla=\mathit{id}_{V_n}$, and hence $H(V_n, \nabla)=0$.

We also remark that an alternative, "more symmetric" proof may be obtained by defining a homotopy $\tilde h$, such that 
\[
\tilde h\Gamma = \sum_e \pm (\Gamma - \{e\})
\] 
is the sum over all ways of removing one edge from the graph $\Gamma$. Then $(\nabla\tilde h+\tilde h\nabla)\Gamma=\frac {n(n-1)}{2}\Gamma$ where $n$ is the number of vertices, and hence we arrive at the same conclusion as before.
\end{proof}

\begin{cor}\label{cor:evenex}
 $H(\fGCc_0^\notadp, \nabla)$ is two-dimensional, the classes being represented by the single vertex graph and the graph with two vertices and one edge.
\end{cor}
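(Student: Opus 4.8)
The plan is to exploit that $\nabla$ preserves the number of vertices, so that $(\fGCc_0^\notadp,\nabla)$ splits as a product over the vertex number $N$, with the internal grading given by the number of edges. Since $\K$ has characteristic zero, taking $S_N$-coinvariants is exact and commutes with cohomology, so it suffices to compute, for each $N\geq 1$, the cohomology of the complex $C_N$ of connected graphs on the labelled set $\{1,\dots,N\}$ with differential ``add an edge'', and then pass to $S_N$-coinvariants. The two asserted classes live in $N=1$ (the single vertex, where $C_1=\K$ and $\nabla=0$) and $N=2$ (the single edge, where the only possible target is a double edge, which vanishes in the even case, so again $\nabla=0$). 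Both are visibly cocycles and, being of minimal edge number in their pieces, cannot be coboundaries. I would stress here that in the \emph{full} complex the single edge \emph{is} exact, being $\nabla$ of the two-vertex graph with no edges; but that graph is disconnected and hence absent from $\fGCc_0^\notadp$. This is exactly why the connected cohomology is strictly larger than the one-dimensional cohomology of the full complex furnished by Proposition~\ref{prop:evenD}.

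It then remains to show that the $S_N$-coinvariants of $H(C_N,\nabla)$ vanish for all $N\geq 3$. I would first identify $C_N$ with the subcomplex of the Koszul complex $\big(\Lambda^\bullet(\K^{E_N}),\ \omega\wedge(-)\big)$, where $E_N$ is the set of all $\binom{N}{2}$ possible edges and $\omega=\sum_{e\in E_N}e$, spanned by the connected edge-subsets; this is a subcomplex because adding an edge to a connected graph keeps it connected. The crux is the classical computation of the cohomology of the complex of connected graphs: $H(C_N,\nabla)$ is concentrated in the lowest (spanning-tree) degree and is isomorphic, as an $S_N$-module, to the sign-twist $\Lie(N)\otimes\sgn$ (equivalently, to the top Whitney homology of the partition lattice of $\{1,\dots,N\}$), of dimension $(N-1)!$. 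For a sanity check one can do $N=3$ by hand: $C_3$ is $\big(\text{three trees}\big)\xrightarrow{\ \nabla\ }\big(\text{triangle}\big)$, the differential is surjective, and the kernel is the two-dimensional standard representation of $S_3$, whose coinvariants vanish.

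Granting this identification, the coinvariants are computed by Frobenius reciprocity. Writing $\Lie(N)=\mathrm{Ind}_{\Z/N}^{S_N}\chi$ for a primitive character $\chi$ of the cyclic group $\Z/N$, the multiplicity in $\Lie(N)$ of both the trivial and the sign representation is zero once $N\geq 3$: the restriction to $\Z/N$ of the trivial, respectively the sign, character sends a generator to $1$, respectively $(-1)^{N-1}$, neither of which is a primitive $N$-th root of unity for $N\geq 3$. Hence the $S_N$-coinvariants of $H(C_N,\nabla)$ vanish for $N\geq 3$, while for $N=1,2$ they are one-dimensional, producing exactly the two advertised classes. The main obstacle is precisely this step for $N\geq 3$: the explicit edge-removal homotopy of Proposition~\ref{prop:evenD} does \emph{not} restrict to the connected complex, since removing a bridge disconnects the graph, and it is the combinatorics of these bridges that the Lie-operad (partition-lattice) computation resolves. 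A more self-contained, but more delicate, alternative would be to run the spectral sequence of the filtration by the number of connected components on $\widehat{S}(\fGCc_0^\notadp)$, whose $E^1$-page is $\widehat{S}\big(H(\fGCc_0^\notadp,\nabla)\big)$ with differential induced by merging two components along a new edge; since by Proposition~\ref{prop:evenD} it converges to $\K$, one can in principle bootstrap the shape of $H(\fGCc_0^\notadp,\nabla)$, but inverting the merging differential is awkward, which is why I would favour the direct per-$N$ computation above.
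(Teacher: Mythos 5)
Your argument is correct, but it proceeds along a genuinely different route from the paper's. The paper keeps the decomposition by vertex number only implicitly and instead runs an induction on the number of vertices: given a $\nabla$-cocycle $\gamma$ in the $n$-vertex connected part, it invokes Proposition \ref{prop:evenD} to produce a possibly disconnected primitive $\nu$ in $\fGC_0^\notadp$, decomposes $\nu$ by number of connected components, and shows that a primitive with the minimal number of components must already be connected --- the induction hypothesis forces every component of the leading term to be an isolated vertex or an isolated edge, and these configurations are then excluded directly. That argument is elementary and entirely self-contained relative to Proposition \ref{prop:evenD}. You instead compute each $H(C_N,\nabla)$ on the nose before passing to coinvariants, importing the classical fact that the complex of connected graphs on $N$ labelled vertices (equivalently, the reduced homology of the complex of disconnected graphs, or of the partition lattice) is concentrated in the spanning-tree degree and is $\Lie(N)\otimes\sgn$ of dimension $(N-1)!$, and then killing the coinvariants for $N\geq 3$ by Frobenius reciprocity via Klyachko's description $\Lie(N)=\mathrm{Ind}_{\Z/N}^{S_N}\chi$. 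This buys strictly more information (the full $S_N$-equivariant answer and the concentration in tree degree), and your computation of the multiplicities of \emph{both} the trivial and the sign character makes the conclusion immune to any ambiguity in the sign-twist of the classical statement; the price is that the crux of the proof is outsourced to a nontrivial classical theorem whose proof is of comparable depth to the statement being established, whereas the paper's induction needs nothing beyond Proposition \ref{prop:evenD}. Your observations that the edge-removal homotopy fails on the connected subcomplex because of bridges, and that the single edge is exact only via a disconnected primitive, correctly identify why the connected case does not follow formally from the full one; both approaches are valid, and your $N=3$ sanity check (kernel equals the standard representation, whose coinvariants vanish) is accurate.
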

\begin{proof}

It suffices to consider the sub-complex $\fGCc_0^n$ spanned by graphs with $n$ vertices.
Clearly $\fGCc_0^1$ and $\fGCc_0^2$ are 1 dimensional and hence the differential acts trivially.
We assume inductively that for all $3\leq j\leq n-1$ we have $H(\fGCc_0^j)=0$. Let $\gamma$ be a cocycle in $\fGCc_0^n$, i.e.\ $\nabla\gamma=0$. Let $\nu$ be an element of $\fGC_0^n$, such that $\nabla\nu=\gamma$.
We decompose $\nu$ into
\[
 \nu =\nu_1+\nu_2+\cdots + \nu_k.
\]
where $\nu_l$ is the term of graphs with $l$ connected components. We chose $\nu$ such that $k$ is least possible. If $k=1$, $\nu$ is connected and $\gamma$ is a exact in $\fGCc_0^n$ what we need. So let us assume $k>1$.

We split $\nabla=\nabla_0+\nabla_1$ on $\fGC_0^n$ into a term that leaves the number of connected components invariant, and one that lowers the number of connected components (necessarily by one).
The leading term $\nu_k$ must be $\nabla_0$-closed. Since $\nabla_0$ does not change the number of connected components it commutes with the action of symmetry group that permutes the components, and we can distinguish them. So, every connected component in $\nu_k$ is $\nabla_0$- (that is now $\nabla$-) closed. If one of them is exact we may replace it with its preimage and get $\nu'_k$. Then $\nu_k=\nabla_0\nu'_k$ is a ($\nabla_0$-)coboundary and we may replace $\nu$ with the cohomologous cycle $\nu-\nabla \nu'_k$ that does not have a $\nu_k$ part, contradicting the minimality of $k$. So every connected component represents a nontrivial cohomology class, i.e., by the induction hypothesis, every connected component is an isolated vertex or an isolated edge between two vertices. Therefore, and because of symmetry reasons, $\nu_k$ is a linear combination of graphs which are a union of isolated vertices, and possibly at most one isolated edge.
If there is an isolated edge in $\nu_k$, for $\nu'$ being the union of all isolated vertices, $\nu$ can be replaced with $\nu-\nabla \nu'$, that again does not have a $\nu_k$ part contradicting the minimality of $k$. So there may only be $n$ isolated vertices and $k=n$. But the equality $\nabla_1\nu_n+\nabla_0\nu_{n-1}=0$ can be satisfied only if $\nu_n=0$. Hence again $\nu_k=0$, concluding the proof.

\end{proof}

\subsection{A picture of the even graph cohomology}
\begin{cor}\label{cor:even}
There is a spectral sequence converging to
\[
 H(\fGCc_0^\notadp, \delta+\nabla)=0
\]
whose $E^1$ term is 
\[
 (H(\fGCc_0^\notadp,\delta), \nabla).
\]
\end{cor}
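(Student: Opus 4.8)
The plan is to obtain the asserted spectral sequence from the filtration of $(\fGCc_0^\notadp,\delta+\nabla)$ by loop order, and to compute its limit by a complementary filtration together with Corollary~\ref{cor:evenex}. The two filtrations are spectral sequences of the \emph{same} complex, hence converge to the same intrinsic cohomology, which is what lets me read off the $E^1$-page from one and the limit from the other.

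First I would record how the two pieces of the differential interact with the natural gradings. The vertex-splitting differential $\delta=[m,\cdot]$ increases the number of vertices and the number of edges each by one, and so preserves the loop order $b_1=\#\text{edges}-\#\text{vertices}+1$; the extra differential $\nabla$ adds a single edge and fixes the vertex number, hence raises $b_1$ by exactly one. Filtering $\fGCc_0^\notadp$ by $F^p=\{b_1\ge p\}$ therefore makes $\delta+\nabla$ filtration preserving, with associated graded differential equal to $\delta$ and with the filtration-raising part equal to $\nabla$. The resulting first page is $E^1=H(\fGCc_0^\notadp,\delta)$ equipped with the differential $d_1$ induced by $\nabla$, which is precisely the $E^1$ claimed in the statement.

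Second I would settle convergence, which is the point requiring the most care. In the even case the cohomological degree of a graph equals $-\#\text{edges}$ up to a global shift, so fixing the total degree fixes the number of edges; since a connected tadpole-free graph with a given number of edges has at most one more vertex than it has edges, only finitely many such graphs exist. Thus each homogeneous component of $\fGCc_0^\notadp$ is finite dimensional and meets only finitely many filtration steps, the loop-order filtration is bounded in every degree, and the general results of Appendix~\ref{app:specseq} guarantee that the spectral sequence converges to $H(\fGCc_0^\notadp,\delta+\nabla)$.

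Finally I would show the limit vanishes by re-filtering the same complex by the number of vertices, which $\nabla$ preserves and $\delta$ raises by one. The same finiteness estimate gives convergence, and now the associated graded differential is $\nabla$, so the first page is $E^1=(H(\fGCc_0^\notadp,\nabla),\delta)$. By Corollary~\ref{cor:evenex} this page is the two-dimensional space spanned by the single vertex $v$ and the single edge $e$, and a direct evaluation of $\delta=[m,\cdot]$ gives $\delta v=e$ up to a nonzero scalar, while $\delta e$, being supported on three-vertex graphs, represents the zero class in $H(\fGCc_0^\notadp,\nabla)$ by Corollary~\ref{cor:evenex}; thus $d_1 v=e$, $d_1 e=0$, and the second page already vanishes. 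Consequently $H(\fGCc_0^\notadp,\delta+\nabla)=0$, the limit of the spectral sequence built above. I expect the algebra here to be routine; the genuine obstacle is the bookkeeping behind the two convergence claims, i.e.\ verifying that both filtrations are exhaustive and bounded in each fixed degree so that the machinery of Appendix~\ref{app:specseq} applies.
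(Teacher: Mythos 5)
Your proposal is correct and follows essentially the same route as the paper: the authors also use the two filtrations (by vertex number, whose $E^1$ is the two-dimensional $\nabla$-cohomology of Corollary~\ref{cor:evenex} with the induced $\delta$ cancelling it, and by Betti number $e-v$, whose $E^1$ is $H(\fGCc_0^\notadp,\delta)$ with induced differential $\nabla$), invoking Proposition~\ref{prop:App1} for convergence in both cases. The only difference is the order in which the two filtrations are presented, and your explicit finiteness check (fixed degree fixes the edge number, hence bounds the vertex number for connected graphs) is exactly what justifies the paper's appeal to the appendix.
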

\begin{proof}
First consider the descending exhaustive filtration of $(\fGCc_0, \delta+\nabla)$
\[
 \mF^1=\fGCc_0^\notadp \supset \mF^2 \supset \cdots
\]
where $\mF^p$ consists of graphs with $\geq p$ vertices, and set up a spectral sequence associated to that filtration. On the $E^1$ page we have $H(\fGCc_0^\notadp,\nabla)$, which is generated by two generators, the single vertex graph and the graph with two connected vertices (cf. Corollary \ref{cor:evenex}). The differential on this page is induced by $\delta$, mapping the single-vertex graph to the graph with two vertices. Hence the next page in the spectral sequence is $E^2=0$.
It follows by a standard spectral sequence argument (Proposition \ref{prop:App1} from the Appendix) that $H(\fGCc_0^\notadp, \delta+\nabla)=0$.

Next consider the exhaustive descending filtration
\[
 \mG^{0}=\fGCc_0^\notadp \supset \mG^{1}\supset \cdots
\]
where $\mG^{p}$ is spanned by graphs whose Betti number $e-v$ is at least $p-1$, and set up a spectral sequence associated to that filtration. On the $E^1$ page we have $H(\fGCc_0^\notadp,\delta)$. This spectral sequence indeed converges to the cohomology (again by Proposition \ref{prop:App1}), i.~e., to 0.
\end{proof}

The table \ref{tbl:evencanceling} represents the $E^1$ page of the spectral sequence from Corollary \ref{cor:even}, i.e.\ $H(\fGCc_0^\notadp,\delta)$. The column number represents the number of edges $e$ and the row number represents the Betti number minus one $b=e-v$, and the displayed numbers are the dimensions of the respective parts of the graph cohomology $H(\fGCc_0^\notadp,\delta)$.\footnote{The numbers in the table have been partially taken from \cite{barnatanmk}, and have partially been computed by the second author (unpublished). The latter computations have been performed in floating point arithmetic (due to limited computer power) and hence are not mathematically rigorous. }

\begin{table}
\begin{tikzpicture}
\matrix (mag) [matrix of nodes,ampersand replacement=\&]
{
{}\& 0 \& 1 \& 2 \& 3 \& 4 \& 5 \& 6 \& 7 \& 8 \& 9 \&10 \&11 \&12 \&13 \&14 \&15 \&16 \&17 \&18 \&19 \&20 \&21 \&22 \&23 \&24 \&25 \&26 \&27 \\
-1\& 0 \& 0 \& 0 \& 0 \& 0 \& 0 \& 0 \& 0 \& 0 \& 0 \& 0 \& 0 \& 0 \& 0 \& 0 \& 0 \& 0 \& 0 \& 0 \& 0 \& 0 \& 0 \& 0 \& 0 \& 0 \& 0 \& 0 \& 0 \\
0 \&   \& 0 \& 0 \& 0 \& 0 \& 1 \& 0 \& 0 \& 0 \& 1 \& 0 \& 0 \& 0 \& 1 \& 0 \& 0 \& 0 \& 1 \& 0 \& 0 \& 0 \& 1 \& 0 \& 0 \& 0 \& 1 \& 0 \& 0 \\
1 \&   \&   \& 0 \& 0 \& 0 \& 0 \& 0 \& 0 \& 0 \& 0 \& 0 \& 0 \& 0 \& 0 \& 0 \& 0 \& 0 \& 0 \& 0 \& 0 \& 0 \& 0 \& 0 \& 0 \& 0 \& 0 \& 0 \& 0 \\
2 \&   \&   \&   \& 0 \& 0 \& 0 \& 1 \& 0 \& 0 \& 0 \& 0 \& 0 \& 0 \& 0 \& 0 \& 0 \& 0 \& 0 \& 0 \& 0 \& 0 \& 0 \& 0 \& 0 \& 0 \& 0 \& 0 \& 0 \\
3 \&   \&   \&   \&   \& 0 \& 0 \& 0 \& 0 \& 0 \& 0 \& 0 \& 0 \& 0 \& 0 \& 0 \& 0 \& 0 \& 0 \& 0 \& 0 \& 0 \& 0 \& 0 \& 0 \& 0 \& 0 \& 0 \& 0 \\
4 \&   \&   \&   \&   \&   \& 0 \& 0 \& 0 \& 0 \& 0 \& 1 \& 0 \& 0 \& 0 \& 0 \& 0 \& 0 \& 0 \& 0 \& 0 \& 0 \& 0 \& 0 \& 0 \& 0 \& 0 \& 0 \& 0 \\
5 \&   \&   \&   \&   \&   \&   \& 0 \& 0 \& 0 \& 0 \& 0 \& 0 \& 0 \& 0 \& 0 \& 1 \& 0 \& 0 \& 0 \& 0 \& 0 \& 0 \& 0 \& 0 \& 0 \& 0 \& 0 \& 0 \\
6 \&   \&   \&   \&   \&   \&   \&   \& 0 \& 0 \& 0 \& 0 \& 0 \& 0 \& 0 \& 1 \& 0 \& 0 \& 0 \& 0 \& 0 \& 0 \& 0 \& 0 \& 0 \& 0 \& 0 \& 0 \& 0 \\
7 \&   \&   \&   \&   \&   \&   \&   \&   \& 0 \& 0 \& 0 \& 0 \& 0 \& 0 \& 0 \& 0 \& 1 \& 0 \& 0 \& 1 \& 0 \& 0 \& 0 \& 0 \& 0 \& 0 \& 0 \& 0 \\
8 \&   \&   \&   \&   \&   \&   \&   \&   \&   \& 0 \& 0 \& 0 \& 0 \& 0 \& 0 \& 0 \& 0 \& 0 \& 1 \& 0 \& 0 \& 1 \& 0 \& 0 \& 0 \& 0 \& 0 \& 0 \\
9 \&   \&   \&   \&   \&   \&   \&   \&   \&   \&   \& 0 \& 0 \& 0 \& 0 \& 0 \& 0 \& 0 \& 0 \& 0 \& 0 \& 1 \& 0 \& 0 \& 2 \& 0 \& 0 \& 0 \& 1 \\
10\&   \&   \&   \&   \&   \&   \&   \&   \&   \&   \&   \& 0 \& 0 \& 0 \& 0 \& 0 \& 0 \& 0 \& 0 \& 0 \& 0 \& 0 \& 2 \& 0 \& ? \& ? \& ? \& ? \\
11\&   \&   \&   \&   \&   \&   \&   \&   \&   \&   \&   \&   \& 0 \& 0 \& 0 \& 0 \& 0 \& 0 \& 0 \& 0 \& 0 \& 0 \& 0 \& 0 \& 2 \& ? \& ? \& ? \\
12\&   \&   \&   \&   \&   \&   \&   \&   \&   \&   \&   \&   \&   \& 0 \& 0 \& 0 \& 0 \& 0 \& 0 \& 0 \& 0 \& 0 \& 0 \& 0 \& 0 \& 0 \& 3 \& ? \\
};
\draw (mag-2-1.north west) -- (mag-1-29.south east);
\draw (mag-1-2.north west) -- (mag-15-1.south east);
\draw[-latex, thick] (mag-3-7) edge (mag-5-8);
\draw[-latex, thick] (mag-3-11) edge (mag-7-12);
\draw[-latex, thick] (mag-3-15) edge (mag-9-16);
\draw[-latex, thick] (mag-8-17) edge (mag-10-18);
\draw[-latex, thick] (mag-3-19) edge (mag-11-20);
\draw[-latex, thick] (mag-10-21) edge (mag-12-22);
\draw[-latex, thick] (mag-3-23) edge (mag-13-24);
\draw[-latex, thick] (mag-11-23) edge (mag-13-24);
\draw[-latex, thick] (mag-12-25) edge (mag-14-26);
\draw[-latex, thick] (mag-3-27) edge (mag-15-28);
\end{tikzpicture}
\caption{\label{tbl:evencanceling} Table of cohomology of $\fGCc_0$. The arrows indicate some cancellations in the spectral sequence.}
\end{table}

The classes for $b=0$ are represented by the loop graphs $L_k$ (see Figure \ref{fig:loops}). Only the elements $L_{4j+1}$ are non-zero in $\fGCc_0$ by symmetry reasons, and $L_1\notin\fGCc_0^\notadp$ by definition.
Corollary \ref{cor:even} implies that classes in the graph cohomology occur in pairs, with one partner cancelling the other on some page of the above spectral sequence. Some cancellations are indicated by arrows in the table above.

In particular, one may ask which classes are cancelled by the graph cohomology classes $L_{4j+1}$ on some page of the spectral sequence. Note that $H^0(\fGCc_2)\cong \grt_1$ is the Grothendieck-Teichm\"uller Lie algebra \cite{grt} and that this Lie algebra contains a free Lie subalgebra generated by elements $\sigma_3,\sigma_5,\cdots\in \grt_1$, see \cite{brown}. Conjecturally, $\grt_1$ is equal to this free Lie algebra. Explicit integral formulas for the graph cocycle corresponding to $\sigma_{2j+1}$, which we also call $\sigma_{2j+1}$ abusing notation, are derived in \cite{carlothomas}. We may identify $H(\fGCc_2)$ with $H(\fGCc_0)$ up to degree shifts, and by slight abuse of notation we will the $\sigma_{2j+1}$ both as elements of $H(\fGCc_2)$ and $H(\fGCc_0)$. We then have the following result.

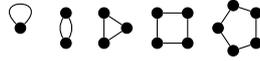
\begin{figure}
 \[
\begin{tikzpicture}
\node (v0) at (0.5,1) [int] {};
\draw (v0) to [out=45,in=135,loop] ();
\node (v1) at (1.1,0.8) [int] {};
\node (v2) at (1.1,1.2) [int] {};
\draw (v1) edge [bend left] (v2);
\draw (v1) edge [bend right] (v2);
\node (v3) at (1.6,0.8) [int] {};
\node (v4) at (1.6,1.2) [int] {};
\node (v5) at (1.9,1) [int] {};
\node (v6) at (2.3,0.8) [int] {};
\node (v7) at (2.3,1.2) [int] {};
\node (v8) at (2.7,1.2) [int] {};
\node (v9) at (2.7,0.8) [int] {};
\node (v10) at (3.1,1) [int] {};
\node (v11) at (3.3,1.3) [int] {};
\node (v12) at (3.3,0.7) [int] {};
\node (v13) at (3.6,0.8) [int] {};
\node (v14) at (3.6,1.2) [int] {};
\draw (v3)--(v4) (v4)--(v5) (v5)--(v3) (v6)--(v7) (v7)--(v8) (v8)--(v9) (v9)--(v6) (v10)--(v11) (v11)--(v14) (v14)--(v13) (v13)--(v12) (v12)--(v10) ;
\end{tikzpicture}
\]
\caption{\label{fig:loops} The loop graph cocycles $L_k$. Note that only the $L_{4j+1}$ are non-zero in the graph complex $\fGCc_0$.}
\end{figure}

\begin{prop}\label{prop:wheelkills}
 In the spectral sequence of Corollary \ref{cor:even} the graph cocycle $\sigma_{2j+1}$ and the loop cocycle $L_{4j+1}$ (see Figure \ref{fig:loops}) survive up to the $E^{2j}$ page, where a multiple of $L_{4j+1}$ cancels $\sigma_{2j+1}$.
\end{prop}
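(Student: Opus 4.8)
My plan is to locate both classes on the bigraded $E^1$-page of the $\nabla$-spectral sequence of Corollary \ref{cor:even} and then let the convergence to $0$ do the work, the only genuinely hard point being the identification of the target of the decisive differential. I first record the bidegrees. Grading the page by edge number $e$ and Betti degree $b=e-v$ (the columns and rows of Table \ref{tbl:evencanceling}), the vertex-splitting differential $\delta$ raises $e$ by $1$ and fixes $b$, while $\nabla$ raises both $e$ and $b$ by $1$; tracing the standard zig-zag one finds that the induced differential $d_r$ on the $r$-th page raises $e$ by exactly $1$ and $b$ by $r$. The loop class $L_{4j+1}$ sits at $(e,b)=(4j+1,0)$, and $\sigma_{2j+1}$, being the degree-$0$ generator of loop order $2j+1$, sits at $(e,b)=(4j+2,2j)$ (so $v=2j+2$, $e=4j+2$; for $j=1$ it is the tetrahedron). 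Consequently the unique page on which a differential can connect the two spots is $E^{2j}$, in agreement with the arrows drawn in Table \ref{tbl:evencanceling}.

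Next I would argue that a differential there must be present and nonzero. No differential can map into $(4j+1,0)$, since any source would have $b<0$; thus $L_{4j+1}$ is a cycle on every page, and since $E^\infty=0$ it must eventually support a nonzero outgoing differential. Its possible targets $d_r(L_{4j+1})$ lie at $(4j+2,r)$; for $r>2j$ these have $\GC_2$-degree $4j-2r<0$ and contain no loops, so they vanish by $H^{<0}(\GC_2)=0$. Dually, $\sigma_{2j+1}$ has degree $0$, so all of its outgoing differentials land in negative degree and vanish, making it a permanent cycle that must be hit. Hence, once one knows that $d_1,\dots,d_{2j-1}$ vanish on $L_{4j+1}$, the class can die only on page $E^{2j}$, forcing $d_{2j}(L_{4j+1})\neq0$ into the degree-$0$ part of loop order $2j+1$.

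The remaining work, which I expect to be the real obstacle, is twofold. First I must show that $d_1,\dots,d_{2j-1}$ indeed vanish on $L_{4j+1}$, i.e.\ that the zig-zag $\nabla y_i=\delta y_{i+1}$ with $y_0=L_{4j+1}$ can be continued for $2j$ steps. The degree bounds recalled in the introduction already kill the spots $(4j+2,r)$ for $r\le\lfloor(4j+1)/3\rfloor$; for the intermediate rows I would either invoke the concentration of the non-loop cohomology of $\fGCc_0^\notadp$ in degree $0$ and in odd degrees (which empties the even-degree spots $4j-2r$) or, lacking such an abstract statement, build the primitives $y_{i+1}$ by hand as in Appendix \ref{app:7loop}. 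Second, I must identify the image $d_{2j}(L_{4j+1})$ as a nonzero multiple of $\sigma_{2j+1}$ rather than of a bracket $[\sigma_a,\sigma_b]$ of the same loop order; for this I would use the explicit cocycles of \cite{carlothomas} and track a distinguished leading term---equivalently, the depth-$1$ component of $\grt_1$---through the zig-zag, since it is precisely the depth that singles out $\sigma_{2j+1}$ among the degree-$0$ classes. This last identification is where the genuine difficulty lies.
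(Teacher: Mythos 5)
Your bookkeeping is correct and matches the paper's: the bidegrees of $L_{4j+1}$ and $\sigma_{2j+1}$, the behaviour of $d_r$, and the soft observations that $L_{4j+1}$ cannot be hit and that $\sigma_{2j+1}$ cannot map out nontrivially are all fine, and they do show that page $E^{2j}$ is the \emph{only} page on which these two spots can interact. But the two steps you yourself flag as ``the real obstacle'' are exactly the content of the proposition, and the methods you propose for them do not close the gap. For the survival of $L_{4j+1}$ to page $2j$, the degree bound only empties the targets $(4j+2,r)$ for $r\le\lfloor(4j+1)/3\rfloor$; for the remaining range you appeal either to the concentration of $H(\GC_2)$ in degrees $0,3,7,11,\dots$ --- which is precisely the conjecture the paper raises and then immediately casts doubt on via the Euler characteristic computations of \cite{eulerchar}, so it cannot be invoked --- or to an explicit construction of primitives that you do not carry out. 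For the identification of $d_{2j}(L_{4j+1})$, ``tracking the depth-$1$ component through the zig-zag'' is not an argument: for larger $j$ the degree-$0$ part in loop order $2j+1$ contains classes other than $\sigma_{2j+1}$ (e.g.\ triple brackets of generators), and nothing in your setup rules out that the differential lands on those, or that $L_{4j+1}$ already dies on an earlier page against an unknown class in an intermediate row.

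The paper closes both gaps simultaneously by a dualization you do not consider. It passes to the predual complex $\fGCcdu_0$ with differential $d+\nabla$ ($d$ = edge contraction, $\nabla$ = edge deletion) and replaces $\sigma_{2j+1}$ by the single wheel graph $\WG_{2j+1}$, using the fact from \cite{grt} that every representative of $\sigma_{2j+1}$ contains $\WG_{2j+1}$ with nonzero coefficient. The key point is then that $\nabla\WG_{2j+1}$ lies in the subcomplex $C$ of graphs with at least one bivalent vertex, and since $\nabla$ only lowers valences, every candidate target of the differential out of $[\WG_{2j+1}]$ lives in a subquotient of $H(C,d)$, which in the relevant edge number is one-dimensional and spanned by $L_{4j+1}$. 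Combined with the degree constraint this forces $[\WG_{2j+1}]$ to kill $[L_{4j+1}]$ on page $2j$, and dualizing gives the statement. This is the mechanism your proposal is missing: without it, or an equally concrete substitute, the argument does not go through.
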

For $j=1$, the ``process'' is illustrated in Figure \ref{fig:nablaprocess}.

\begin{figure}
\[
\begin{tikzpicture}[baseline=-.65ex]
 \node (a) at (-.1,0) {$L_5$=
 \begin{tikzpicture}[baseline=-.65ex]
  \node[int] (v1) at (0:.5) {};
  \node[int] (v2) at (72:.5) {};
  \node[int] (v3) at (144:.5) {};
  \node[int] (v4) at (216:.5) {};
  \node[int] (v5) at (288:.5) {};
  \draw (v1) edge (v2) edge (v5) (v3) edge (v2) edge (v4) (v4) edge (v5);
 \end{tikzpicture}};
 \node (b) at (2.7,-2) {
\begin{tikzpicture}[baseline=-.65ex]
  \node[int] (v1) at (0:.5) {};
  \node[int] (v2) at (72:.5) {};
  \node[int] (v3) at (144:.5) {};
  \node[int] (v4) at (216:.5) {};
  \node[int] (v5) at (288:.5) {};
  \draw (v1) edge (v2) edge (v5) edge (v4) (v3) edge (v2) edge (v4) (v4) edge (v5);
 \end{tikzpicture}};
 \node (c) at (0.3,-2) {
 \begin{tikzpicture}[baseline=-.65ex]
  \node[int] (v1) at (0:.5) {};
  \node[int] (v2) at (90:.5) {};
  \node[int] (v3) at (180:.5) {};
  \node[int] (v4) at (270:.5) {};
  \draw (v1) edge (v2) edge (v3) edge (v4) (v3) edge (v2) edge (v4);
 \end{tikzpicture}};
 \node (d) at (3.1,-4) {
 \begin{tikzpicture}[baseline=-.65ex]
  \node[int] (v1) at (0:.5) {};
  \node[int] (v2) at (90:.5) {};
  \node[int] (v3) at (180:.5) {};
  \node[int] (v4) at (270:.5) {};
  \draw (v1) edge (v2) edge (v3) edge (v4) (v3) edge (v2) edge (v4) (v2) edge (v4);
 \end{tikzpicture}$\;=\sigma_3$};
 \draw (a) edge[|->] node[auto] {$\nabla$} (b);
 \draw (c) edge[|->] node[auto] {$\delta$} (b);
 \draw (c) edge[|->] node[auto] {$\nabla$} (d);
\end{tikzpicture}
\] 
\caption{\label{fig:nablaprocess} The loop cocyle $L_5$ cancels the graph cocyle $\sigma_{3}$ in the spectral sequence, cf. Proposition \ref{prop:wheelkills}. Prefactors are omitted.}
\end{figure}
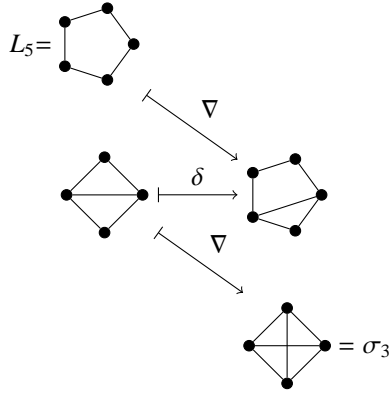

\begin{proof}
Consider the wheel graphs 
\[
\WG_{2j+1}
=
\begin{tikzpicture}[baseline=-.65ex, scale=.75]
\node[int] (c) at (0,0){};
\node[int] (v1) at (0:1) {};
\node[int] (v2) at (72:1) {};
\node[int] (v3) at (144:1) {};
\node[int] (v4) at (216:1) {};
\node (v5) at (-72:1) {$\cdots$};
\draw (v1) edge (v2) edge (v5) (v3) edge (v2) edge (v4) (v4) edge (v5)
      (c) edge (v1) edge (v2) edge (v3) edge (v4);
\end{tikzpicture}
\quad \quad \quad \quad \quad \quad \text{($2j+2$ vertices and $4j+2$ edges)}.
\]
It has been shown in \cite{grt} that any representative of the class $\sigma_{2j+1}$ contains the graph $\WG_{2j+1}$ with non-trivial coefficient.
We will show that any $2j+2$-vertex graph cocyle which has a non-zero coefficient in front of the wheel graph $\WG_{2j+1}$ will survive up to the $2j$-th page of the spectral sequence, where it is killed by a multiple of the loop graph $L_{4j+1}$.

More precisely, we will show the dual statement: 
Consider the complex $\fGCcdu_0$ (pre-)dual to $\fGCc_0$. It carries a differential $d+\nabla$, where $d$ acts as edge contraction and $\nabla$ deletes one edge.\footnote{In fact, this $\nabla$ is combinatorially the same operator as the homotopy $\tilde h$ occurring in the proof of Proposition \ref{prop:evenD}.} Dually to the above situation there is a spectral sequence on $\fGCcdu_0$ converging to $0$, with $H(\fGCcdu_0,d)$ on the first page.
We will show that the class of the graph cocyle $\WG_{2j+1}$ survives in this spectral sequence up to the $2j$-th page, where it kills (a multiple of) the class of the loop graph $L_{4j+1}$. We leave it to the reader to check that this dual statement implies the primal statement, i.~e., the statement of Proposition \ref{prop:wheelkills}.

Consider the degree 0 cocycle $X_1=\WG_{2j+1}$ as a class on the first page of the spectral sequence.
First note: (i) This class cannot be killed on some later page of the spectral sequence by one in lower degree since there is no cohomology of suitable degree and Betti number in the graph complex.\footnote{This follows from the fact that $H^{\leq -1}(\GC_2)=0$ shown in \cite{grt}.} (ii) Classes potentially killed by $X_1$ on the $k$-th page of the spectral sequence must be made of graphs with $2j+1+k$ vertices and $4j+1$ edges. (iii) The differential on the first page maps $X_1$ to (the class represented by) $X_1'=\nabla \WG_{2j+1}$ which lives in the subcomplex (say $C$) of $(\fGCcdu_0, d)$ spanned by graphs containing at least one bivalent vertex. Since $\nabla$ can only decrease the valence of edges, it follows that the class to be killed by $X_1$ will live in a subquotient of the cohomology $H(C,d)$. But the complex $C$ has only one dimensional cohomology with the relevent number of edges, represented by $L_{4j+1}$.
Hence the class of $L_{4j+1}$ must be the class killed by $X_1$, and by (i) this must happen on the $k=2j$-th page of the spectral sequence.
\end{proof}

There are many more elements in $\grt_1\cong H^0(\fGCc_2)$ than the generators of course, at least all commutators $[\grt_1,\grt_1]$. For each of these elements there has to be a ``partner'' cohomology class in the graph complex, that kills it at some stage of the spectral sequence. This partner must live in positive cohomological degrees, and can hence in particular not be another element of $[\grt_1,\grt_1]$. Hence we arrive at the following Corollary of Corollary \ref{cor:even}.

\begin{cor}\label{cor:evendimcor}
There are infinitely many graph cohomology classes of positive degrees in $H(\GC_2)$. There are inequalities
 \[
  \sum_{j\geq 1} \vdim H^{2j-1}(\fGCc_2)_{\beta+j} \geq \vdim_{\beta} \FreeLie(\sigma_3,\sigma_5,\cdots)
 \]
 where $H^{2j-1}(\fGCc_2)_{\beta+j}$ is the piece of the graph cohomology of Betti number $\beta+j$ and $\vdim_{\beta} \FreeLie(\sigma_3,\sigma_5,\cdots)$ is the dimension of the degree $\beta$ subspace of the free Lie algebra, where the generator $\sigma_{2j+1}$ carries degree $2j+1$.
\end{cor}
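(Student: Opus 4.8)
The plan is to read off everything from the single fact, proved in Corollary~\ref{cor:even}, that the Betti-number spectral sequence \emph{converges to zero}. Its $E^1$-page is $H(\fGCc_0^\notadp,\delta)$, which in the relevant (non-negative) degrees agrees with $H(\fGCc_2)$ up to the loop-order-dependent shift relating $\GC_0$ and $\GC_2$; in particular its degree-zero part is $H^0(\fGCc_2)=\grt_1$, which by Brown's theorem contains $\FreeLie(\sigma_3,\sigma_5,\dots)$. Since $E^\infty=0$, every class is eventually cancelled. The heart of the argument is to show that each degree-zero class must be cancelled \emph{as a target}, by a ``partner'' of strictly positive degree, and then to count the partners.

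First I would fix the bookkeeping. For a connected graph with $v$ vertices, $e$ edges and Betti number $g=e-v+1$, the cohomological degree is $e$ in $\fGCc_0$ and $2v-e-2=e-2g$ in $\fGCc_2$; thus the two conventions differ by $-2g$. Filtering by Betti number, the differential $d_r$ raises $e$ (hence the $\GC_0$-degree) by exactly $1$ and raises $g$ by exactly $r$, so in the $\GC_2$-convention it shifts the degree by $1-2r$ and the Betti number by $r$. Denote by $E^r_{\,p,g}$ the summand of $\GC_2$-degree $p$ and Betti number $g$. Now take $x\in H^0(\fGCc_2)$ of Betti number $\beta$. Were $x$ a source, $d_r x$ would be a nonzero class of $\GC_2$-degree $1-2r<0$; but $H^{<0}(\fGCc_2)=0$, since $H^{\le -1}(\GC_2)=0$ by \cite{grt} and the extra classes distinguishing $\fGCc_2$ from $\GC_2$ are the loop classes, which lie in degrees $\ge 1$. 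Hence $x$ is a target: it is cancelled on some page $E^{r}$ by a partner lying in $H^{2r-1}(\fGCc_2)$ of Betti number $\beta-r$. For the generators $\sigma_{2j+1}$ (Betti number $2j+1$) this is exactly Proposition~\ref{prop:wheelkills}, the partner being $L_{4j+1}$ (Betti number $1$) on page $E^{2j}$.

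Next comes the count, at a fixed Betti number $\beta$. Since $x$ is never a source, convergence to zero forces $\vdim E^1_{\,0,\beta}$ to equal the sum over $r\ge 1$ of the ranks of the incoming differentials $d_r\colon E^r_{\,2r-1,\,\beta-r}\to E^r_{\,0,\beta}$. Each such rank is at most $\vdim E^r_{\,2r-1,\,\beta-r}\le \vdim E^1_{\,2r-1,\,\beta-r}=\vdim H^{2r-1}(\fGCc_2)_{\beta-r}$, because $E^r$ is a subquotient of $E^1$. Therefore $\vdim H^0(\fGCc_2)_\beta\le\sum_{r\ge 1}\vdim H^{2r-1}(\fGCc_2)_{\beta-r}$. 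Brown's embedding then supplies $\vdim_\beta\FreeLie\le\vdim H^0(\fGCc_2)_\beta$: it is grading-compatible because both the free Lie degree and the Betti number are additive under the bracket and agree on each generator $\sigma_{2j+1}$. This yields the stated inequality. Finiteness causes no trouble, as only finitely many bidegrees, all finite-dimensional, participate for a fixed $\beta$. The infinitude of positive-degree cohomology follows since $\FreeLie(\sigma_3,\sigma_5,\dots)$ is infinite-dimensional; moreover the classes of Betti number $1$ are precisely the loop classes (at most one-dimensional in each degree), so all but finitely many partners are genuine classes of $\GC_2$, not merely of $\fGCc_2$.

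The step I expect to be the main obstacle is the bidegree bookkeeping just used: one must carry the $\GC_0$-versus-$\GC_2$ degree shift $-2g$ correctly through each page, since it is precisely the sign of this shift that places the partner in positive degree and fixes which Betti number it sits in. One must also check that cancellations on distinct pages contribute independently, so that the page-by-page rank estimates assemble into the single dimension count above; this is the standard subquotient bookkeeping on the $E^r$, but it has to be performed with care because the underlying complexes are infinite products of graphs.
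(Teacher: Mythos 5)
Your argument is correct and is essentially the paper's: the paper gives no formal proof of this corollary, only the informal paragraph preceding it (every element of $\grt_1\cong H^0(\fGCc_2)$ must be cancelled in the spectral sequence of Corollary \ref{cor:even}, and its partner must sit in positive degree), and what you have written is exactly that argument with the bookkeeping made precise. One point, however, you should not have passed over silently. The bidegree count you correctly derive places the partner of a degree-zero class of Betti number $\beta$ in $H^{2r-1}(\fGCc_2)_{\beta-r}$, so your argument proves
\[
\sum_{j\geq 1}\vdim H^{2j-1}(\fGCc_2)_{\beta-j}\ \geq\ \vdim_\beta \FreeLie(\sigma_3,\sigma_5,\dots),
\]
whereas the statement as printed carries the subscript $\beta+j$. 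Your indexing is the right one --- test it on $\sigma_3$: its partner $L_5$ has degree $3=2\cdot 2-1$ and Betti number $1=3-2$, not $3+2=5$ --- so the printed subscript is a misprint; but asserting that your derivation ``yields the stated inequality'' without reconciling or correcting the indices is a genuine flaw in the write-up, and a referee would stop you there. The remaining steps are sound: the non-source property of degree-zero classes via $H^{<0}(\fGCc_2)=0$, the page-by-page rank estimate (legitimate because for fixed $\beta$ only finitely many finite-dimensional bidegrees participate, with convergence supplied by Proposition \ref{prop:App1}), the compatibility of the free-Lie grading with the loop order, and the passage from $\fGCc_2$ to $\GC_2$ by discarding the at-most-one-per-degree loop classes at Betti number one --- this last point being a detail the paper itself glosses over when asserting infinitely many classes in $H^{\geq 1}(\GC_2)$ rather than in $H^{\geq 1}(\fGCc_2)$.
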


The corollary indicates where to expect graph cohomology classes. However, it does not answer two questions:
\begin{itemize}
 \item In what bidegree (cohomological and Betti number) do the extra cohomology classes cancelling $\grt_1$ elements live? 
 \item Are there any other graph cohomology classes or not, i.~e., are the inequalities in Corollary \ref{cor:evendimcor} strict?
\end{itemize}

For the first question one may at least formulate a guess. Namely, since the graph complex is a dg Lie algebra, we may produce a lot of cocycles in degree $4j-1$ (with $j\geq 1$) by acting repeatedly with the $\grt_1\cong H^0(\fGCc)$ on the loop cocyle $L_{4j+1}$. 
Furthermore note that 
\begin{itemize}
 \item The extra differential $\nabla$ is a derivation with respect to the Lie bracket.
 \item The cocycles $\sigma_{2j+1}$ may be chosen $\nabla$-closed, see \cite{carlothomas}.
\end{itemize}
Hence one can check that cocycles obtained by taking brackets of $W_{4j+1}$ with some $\sigma_{2k+1}$'s repeatedly will (-if they survive long enough-) cancel elements of $\grt_1$ on the $2j$-th page of the spectral sequence. For example, $[\sigma_5, L_5]$ cancels $[\sigma_5,\sigma_3]$ on the second page. This sets an upper bound on when (i.e., on what page of the spectral sequence) certain Lie words in $\grt_1$ are killed. However, note that not every Lie bracket of $\grt_1$ elements with loop classes is a non-trivial class, for example $[\sigma_3, L_9]$ (which would kill $[\sigma_5, \sigma_3]$ on the $4$th page of the spectral sequence if it survived) is exact.

Nevertheless one can expect a lot of classes in degrees $4j-1$, $j=1,2,\dots$. In fact, all positive degree classes that have been found numerically up to the present day live in these degrees, with all other positive degrees unoccupied.
It hence seems tempting to raise the conjecture that the even graph cohomology is concentrated in cohomological degrees $0,3,7,11,15,\dots$. However, the Euler characteristic computations of \cite{eulerchar} together with new numerical experiments done by the authors seem to indicate that this is not true in high loop orders.

\section{The extra differential (odd case)}\label{sec:extradiffodd}

We consider the following degree 1 element of $\fGCc_1$
\begin{equation*}\label{equ:thetaMC}
 m = 
\sum_{j\geq 1}
\frac{1}{(2j+1)!} \,
\begin{tikzpicture}[baseline=-.65ex,scale=.8]
 \node[int] (v) at (0,.5) {};
 \node[int] (w) at (0,-0.5) {};
 \draw (v) edge[very thick] node[right] {$\scriptstyle 2j+1$} (w);
\end{tikzpicture},
\end{equation*}
where the thick line labeled by a number $2j+1$ represents a $2j+1$-fold edge, i.e., $2j+1$ edges connecting the same pair of vertices.

\begin{lemma}
 The element $m$ is a Maurer-Cartan element, i.~e., $\delta m + \frac{1}{2}\co{m}{m}=0$.
\end{lemma}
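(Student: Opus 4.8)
The plan is to verify the Maurer-Cartan equation $\delta m + \frac12[m,m]=0$ directly by computing the action of each operator on the multi-edge generators. Recall that $\delta=[\opm,\cdot]$ where $\opm$ is the single-edge graph, so $\delta$ attaches one additional edge in all possible ways to a pair of vertices (creating a new vertex and connecting it). First I would write $m=\sum_{j\geq 1}\frac{1}{(2j+1)!}\theta_{2j+1}$, where $\theta_{2j+1}$ denotes the two-vertex graph joined by a $(2j+1)$-fold edge, and carefully fix the signs: in $\fGCc_1$ (the odd case) vertices have degree $1$ and edges have degree $0$, and edges are symmetric while multiple parallel edges between the same pair must be checked not to vanish by symmetry — this is why only odd multiplicities $2j+1$ appear.

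The key computation is to expand both $\delta m$ and $[m,m]$ as sums of three-vertex graphs (the operations raise the vertex count by one). Acting with $\delta$ on $\theta_{2j+1}$ splits one of the two vertices into two, distributing the $2j+1$ edges in all ways between the old partner and among the two new vertices, plus adding the connecting edge. Acting with $[m,m]$, i.e.\ the pre-Lie self-composition, inserts one $\theta_{2a+1}$ into a vertex of another $\theta_{2b+1}$ and reconnects the dangling half-edges. Both produce graphs on three vertices that are ``linear chains'' or ``triangles'' carrying multi-edges with prescribed multiplicities on each of the three slots. The strategy is to collect, for each resulting three-vertex graph shape with edge-multiplicities $(p,q,r)$, the total coefficient coming from $\delta m$ and from $\frac12[m,m]$ and check they cancel.

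The crux is a combinatorial/binomial identity governing how a $(2j+1)$-fold edge redistributes under vertex-splitting versus how two odd multi-edges combine under the bracket. Concretely, the generating-series bookkeeping via the factors $\frac{1}{(2k+1)!}$ should turn the cancellation into an identity about the odd part of $\sinh$ or about $\binom{2j+1}{\cdot}$ sums over odd/even index splits; the signs from edge- and vertex-permutations in the odd convention must conspire so that terms pairing a $(2a+1)$- and $(2b+1)$-edge against the split of a $(2(a+b)+1)$-edge match up. The hard part will be precisely this sign and symmetry-factor accounting: ensuring that graphs with a bivalent vertex or with even total multiplicity vanish as they should, and that the surviving coefficients from the two operators are negatives of each other. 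I would organize this by treating separately the ``chain'' graphs (where the middle vertex carries two multi-edges) and the ``triangle'' graphs (where all three pairs are connected), verifying the identity slot by slot, and checking that any potentially-nonzero term not produced by $\delta$ is killed by an orientation symmetry.
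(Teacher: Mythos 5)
Your plan is essentially the paper's proof: both expand $\delta m$ and $\tfrac12\co{m}{m}$ into three-vertex multi-edge graphs and show the coefficients cancel by sign and symmetry accounting. The paper organizes this slightly more cleanly by setting $m'=m+{}$(single edge) so that the Maurer--Cartan equation becomes $m'\bullet m'=0$, and the cancellation is then a pure orientation-symmetry statement---writing $Y_{i,j,l}$ for the triangle with multiplicities $i,j$ at the apex and $l$ on the base, one has $Y_{i,j,l}=(-1)^{i+1}Y_{i,l,j}$, so for $i$ even and $j,l$ odd the terms cancel in pairs under $j\leftrightarrow l$---with no binomial or $\sinh$-type identity needed beyond the symmetry of $\frac{1}{i!\,j!\,l!}$ in $j$ and $l$.
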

\begin{proof}
Let
$$
m' := m +
\begin{tikzpicture}[baseline=-.65ex, scale=.5]
 \draw (0,.5) node[int] {} -- (0,-.5) node[int] {};
\end{tikzpicture}
=
\sum_{j\geq 0}
\frac{1}{(2j+1)!} \;
\begin{tikzpicture}[baseline=-.65ex,scale=.8]
 \node[int] (v) at (0,.5) {};
 \node[int] (w) at (0,-0.5) {};
 \draw (v) edge[very thick] node[right] {$\scriptstyle 2j+1$} (w);
\end{tikzpicture}
$$
We have to check that $m'\bullet m'=0$. One checks that
 \[
\begin{tikzpicture}[baseline=-.65ex,scale=.8]
 \node[int] (v) at (0,.5) {};
 \node[int] (w) at (0,-0.5) {};
 \draw (v) edge[very thick] node[right] {$m$} (w);
\end{tikzpicture}
\bullet
\begin{tikzpicture}[baseline=-.65ex,scale=.8]
 \node[int] (v) at (0,.5) {};
 \node[int] (w) at (0,-0.5) {};
 \draw (v) edge[very thick] node[right] {$n$} (w);
\end{tikzpicture}
=
2
\sum_{i+j=m}
\binom{m}{i}\,
\begin{tikzpicture}[baseline=-.65ex,scale=.8]
 \node[int] (v) at (0,.5) {};
 \node[int] (w1) at (-.5,-0.5) {};
 \node[int] (w2) at (.5,-0.5) {};
 \draw (v)  edge[very thick] node[left] {$i$} (w1)
            edge[very thick] node[right] {$j$} (w2)
       (w1) edge[very thick] node[below] {$n$} (w2);
\end{tikzpicture}
\]
 where a fat edge labeled by an index $k$ stands for a $k$-fold edge as before.
 Let us denote the graphs on the left by $X_m$ and $X_n$ and the graph on the right by $Y_{i,j,n}$.
 By symmetry, we have the following identities
 \begin{align*}
  X_m &= (-1)^{m+1} X_m
  &
  Y_{i,j,n} &= (-1)^{n+1} Y_{j, i,n} = (-1)^{i+1}Y_{i,n,j}.
 \end{align*}
In particular $X_m=0$ for $m$ even.
One computes:
 \begin{align*}
  m'\bullet m' &=
  \sum_{k, l\geq 1,odd} \frac{1}{k! l!}
  X_k \bullet X_l
  =
  2
  \sum_{k, l\geq 1,odd}
  \sum_{i+j=k} \frac{1}{i! j! l!}
  Y_{i,j,l}
  \\&=
  4
  \sum_{i\geq 0,even}\sum_{j, l\geq 1,odd} \frac{1}{i! j! l!}
  Y_{i,j,l}
  =
  2
  \sum_{i\geq 0,even}\sum_{j, l\geq 1,odd} \frac{1}{i! j! l!}
  (Y_{i,j,l}+Y_{i,l,j})
  \\ &=0.  
 \end{align*}
\end{proof}

Hence we may compute the cohomology of $\fGCc_1$ with respect to the differential $\delta + \co{m}{\cdot} = \co{m'}{\cdot}$.
It turns out that this cohomology can in fact be computed completely.

\begin{thm}\label{thm:2}
$H(\fGCc_1,\delta + \co{m}{\cdot})$ is one dimensional, the class being represented by
$$
c:=\sum_{j> 0}
\frac{j}{(2j+1)!} \;
\begin{tikzpicture}[baseline=-.65ex,scale=.8]
 \node[int] (v) at (0,.5) {};
 \node[int] (w) at (0,-0.5) {};
 \draw (v) edge[very thick] node[right] {$\scriptstyle 2j+1$} (w);
\end{tikzpicture}.
$$
\end{thm}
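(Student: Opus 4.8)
The plan is to first verify by a soft argument that $c$ is a cocycle, then to exploit the fact that the differential raises the number of vertices by exactly one in order to reduce everything to an explicit computation in vertex numbers $1$ and $2$, and finally to prove acyclicity in all higher vertex numbers, which is where the real work lies.

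For the cocycle property I would avoid a direct combinatorial check and instead deform. Writing $e$ for the single edge $X_1$ and viewing a $k$-fold edge as the $k$-th ``power'' of $e$, we have $m'=\sinh(e)$. Consider the family $m'_t:=\sinh(te)=\sum_{j\ge 0}\frac{t^{2j+1}}{(2j+1)!}X_{2j+1}$. The computation in the preceding Lemma is homogeneous and goes through verbatim for each $t$, so $m'_t\bullet m'_t=0$ and hence $[m'_t,m'_t]=0$ for all $t$. Since $m'_t$ has odd degree, differentiating $\tfrac12[m'_t,m'_t]=0$ in $t$ yields $[m'_t,\dot m'_t]=0$; evaluating at $t=1$ gives $[m',e\cosh(e)]=0$, where $e\cosh(e)=\frac{d}{dt}\big|_{t=1}\sinh(te)=\sum_{j\ge 0}\frac{1}{(2j)!}X_{2j+1}$. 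As $[m',m']=0$ already exhibits $m'$ as a cocycle, the identity $c=\tfrac12\big(e\cosh(e)-m'\big)$ shows that $c$ is closed.

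Next I would grade $\fGCc_1$ by the number of vertices $N$. Both terms of $[m',\gamma]=m'\bullet\gamma\pm\gamma\bullet m'$ --- vertex splitting and ``attach one new vertex by an odd number of edges'' --- add exactly one vertex, so $D:=\delta+[m,\cdot]=[m',\cdot]$ makes $\fGCc_1$ a cochain complex
\[
C_1\xrightarrow{\ D\ }C_2\xrightarrow{\ D\ }C_3\xrightarrow{\ D\ }\cdots,
\]
and the asserted class $c$ lives in $C_2$. In the odd case tadpoles and even multi-edges vanish, so $C_1=\K\langle *\rangle$ is the single vertex and $C_2=\K\langle X_{2j+1}\mid j\ge 0\rangle$. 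A one-line check gives $D*=m'$ (up to sign), so $H^1=0$ and the image of $D$ in $C_2$ is $\K\langle m'\rangle$. Using the triangle formula $X_a\bullet X_b=2\sum_{i+j=a}\binom{a}{i}Y_{i,j,b}$ from the Lemma together with the symmetries of $Y_{i,j,n}$, I would compute $\ker\big(D|_{C_2}\big)$ explicitly; the expected outcome is that this kernel is exactly $\K\langle m',c\rangle$, so that $H^2=\K\langle c\rangle$ after quotienting by the image $\K\langle m'\rangle$ (note $c$ and $m'$ are independent, their coefficients differing by the factor $j$).

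The crux, and the step I expect to be the main obstacle, is the vanishing $H^N=0$ for all $N\ge 3$. The cleanest route is conceptual: identify $(\fGCc_1,D)$ with the deformation complex $\Def(\Ass\to\Ass)$, the point being that $m'=\sinh(e)$ precisely encodes the associative product under the exp/log passage between the Lie and associative structures, and that under this dictionary $D$ becomes the operadic (Hochschild) differential. Rigidity of the associative operad then forces the deformation cohomology to be one-dimensional, spanned by the Euler rescaling derivation, which matches $c$ exactly in view of the generating-function identity $c=\tfrac12\big(e\partial_e-1\big)m'$, the operator $e\partial_e$ being precisely ``multiply by the number of edges''. Making this identification rigorous --- matching multi-edge graphs with operadic compositions and checking the two differentials agree on the nose, signs included --- is the hard part. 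A more hands-on alternative would be an induction on $N$ with an explicit contracting homotopy in the spirit of Proposition \ref{prop:evenD} and Corollary \ref{cor:evenex}; however the higher multi-edge pieces of $D$ make a single homotopy unwieldy, and filtering by edge number is unhelpful here since its leading term is $\delta$, whose cohomology is itself the content of the main theorem and would make the argument circular.
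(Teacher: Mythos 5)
Your first two steps are sound: the deformation argument $m'_t=\sinh(te)$ correctly exhibits $c=\tfrac12(e\cosh(e)-m')$ as a cocycle (this is in essence the paper's own construction of the representative, namely applying the grading generator to the Maurer--Cartan element), and since the cohomological degree in $\fGCc_1$ is a function of the vertex number alone, the reduction to a complex $C_1\to C_2\to C_3\to\cdots$ is legitimate. (The computation of $\ker(D|_{C_2})$ is only asserted as an ``expected outcome''; it is a genuine, if finite-looking, piece of work, since each $C_N$ is an infinite product over edge multiplicities.)

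The crucial step, however, rests on a false identification. The complex $(\fGCc_1,\delta+[m,\cdot])$ is \emph{not} $\Def(\Ass_\infty\to\Ass)$: the arity-$N$ part $\Hom_{S_N}(\Ass^\vee(N),\Ass(N))$ of the latter is $N!$-dimensional, whereas the $N$-vertex part of $\fGCc_1$ is an infinite-dimensional product over edge numbers, and the twisted complex is rather a stable Chevalley-type complex for the Moyal algebra (cf.\ Remark \ref{rem:MCMoyal}). What is true is that the graph complex acts by derivations on a graphical model $\Graphs_1^\Theta$ of $\Ass$ (Proposition \ref{prop:AssGraqiso}), giving a chain map $\alpha$ from (an extension of) $\fGCc_1$ into the acyclic complex $\Def(\Ass_\infty\to\Graphs_1^\Theta)$ (Lemma \ref{lem:assrigid}); but to conclude anything about $H(\fGCc_1)$ one must show $\alpha$ is injective on cohomology, which is a substantial theorem in its own right --- the paper only sketches this route in Appendix \ref{app:altproof} and explicitly leaves that injectivity unproved. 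So rigidity of $\Ass$ does not ``force'' the answer; the acyclicity of the target says nothing about the source without the injectivity statement. The paper's actual proof is combinatorial and avoids the circularity you worry about by a different device than filtering by edges: it enlarges the complex with odd ``dotted'' edges so that the twisted differential acquires an edge-adding term $[p,\cdot]$ analogous to $\nabla$ in the even case (Proposition \ref{prop:fGCcqiso}, Corollary \ref{cor:qi_dotted-extra}), filters by the number of vertices so that the associated graded differential acts pairwise on vertices and can be computed, reducing to the ``waved'' complex (Proposition \ref{prop:qi_waved-dotted}), which is then killed by an explicit maximum-vertex homotopy (Proposition \ref{prop:waved_acyclic}). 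Without either that enlargement trick or a proof of the injectivity of $\alpha$ on cohomology, your argument for $H^N=0$, $N\geq 3$, does not go through.
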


The proof of this theorem will occupy the rest of this section. Let us however give a brief overview of the strategy, for the reader's convenience. There are four steps.
\begin{enumerate}
\item We introduce an auxiliary graph complex, the \emph{dotted complex}, which is larger than $\fGCc_1$ in that graphs may contain either even or odd ("dotted") edges. We show in Proposition \ref{prop:fGCcqiso} and Corollary \ref{cor:qi_dotted-extra} that this larger complex is still essentially quasi-isomorphic to $\fGCc_1$.
\item The (deformed) differential on the dotted complex contains a term which formally resembles the differential $\nabla$ (i.e., it adds one dotted edge in all possible ways) discussed in the previous section for the even-$d$-version of the graph complexes.
By a similar argument as that leading to the proof of Corollary \ref{cor:even} above we then reduce (in Proposition \ref{prop:qi_waved-dotted}) the dotted graph complex to a much simpler complex, which we call waved complex.
\item The waved complex is shown to be acyclic in Proposition \ref{prop:waved_acyclic}.
\item From this result we then finally deduce Theorem \ref{thm:2} in section \ref{sec:thm2proof}.
\end{enumerate}

\begin{rem}\label{rem:MCMoyal}
More conceptually, the Maurer-Cartan element above arises as follows. The operad $\Gra_1$ acts the space of polynomial functions $\K[V]$ on any symplectic vector space $V$.
The Moyal product endows $\K[V]$ with an associative algebra structure, which can be seen to factor through $\Gra_1$, i.e., we obtain a map $\Ass\to \Gra_1$. By restriction to $\Lie\subset \Ass$ we obtain an operad map $\mu: \Lie \to \Gra_1$.
The Maurer-Cartan element $m$ above can be seen as the Maurer-Cartan element in the deformation complex (cf. \cite{grt}) $\Def(\hoLie_1\to \Gra_1)=:\fGC_1$ corresponding to $\mu$.
The twisted graph complex may then be understood as a stable version of the Chevalley complex of the Moyal algebra.
Choosing Darboux coordinates on the symplectic vector space $V$ we may identify $V$ with $\K^{n+n}$ with the standard symplectic structure and the Moyal algebra with the differential operators on $\K^n$. Hence the twisted graph complex is a stable version of the Chevalley complex of the Lie algebra of differential operators, and Theorem \ref{thm:2} states that its cohomology is essentially trivial.
\end{rem} 

\subsection{Dotted complex} 
Let $\Gra^{\lin\dotte}_1$ be an operad defined analogously to $\Gra_1$ (cf. section \ref{sec:grcomplexes}), but with an additional type of edges
\begin{tikzpicture}[baseline=-.65ex,scale=.5]
 \node[int] (a) at (0,0) {};
 \node[int] (c) at (1,0) {};
 \draw (a) edge[dotted] (c);
\end{tikzpicture}
which we call dotted edges, and which we assign degree $1$.
It is understood that an ordering of the dotted edges is fixed, and two graphs with orderings differing by some permutation are identified up to the sign of the permutation. In particular, graphs with multiple dotted edges are zero.
Dotted tadpoles are not allowed by definition.
We endow $\Gra^{\lin\dotte}_1$ with a differential 
 $$
 \delta_1=\sum_{e\text{ multiple edge}}\delta_1^e
 $$
 where the sum ranges over all pairs of vertices with more than one (standard, not dotted) edge between them, and
 $$
\delta_1^e\;
\begin{tikzpicture}[baseline=-.65ex,scale=.8]
 \node[ext] (b) at (0,0) {};
 \node[ext] (a) at (1,0) {};
 \draw (a) edge[very thick] node[above] {$\scriptstyle n$} (b);
 \draw (b) edge (-.3,0.3);
 \draw (b) edge (-.4,0.1);
 \draw (b) edge (-.3,-0.3);
 \draw (b) edge (-.4,-0.1);
 \draw (a) edge (1.3,0.3);
 \draw (a) edge (1.4,0.1);
 \draw (a) edge (1.3,-0.3);
 \draw (a) edge (1.4,-0.1);
\end{tikzpicture}
=-n(n-1)\;
\begin{tikzpicture}[baseline=-.65ex,scale=.8]
 \node[ext] (b) at (0,0) {};
 \node[ext] (a) at (1,0) {};
 \draw (a) edge[very thick,bend right=20] node[above] {$\scriptstyle n-2$} (b);
 \draw (a) edge[dotted,bend left=20] (b);
 \draw (b) edge (-.3,0.3);
 \draw (b) edge (-.4,0.1);
 \draw (b) edge (-.3,-0.3);
 \draw (b) edge (-.4,-0.1);
 \draw (a) edge (1.3,0.3);
 \draw (a) edge (1.4,0.1);
 \draw (a) edge (1.3,-0.3);
 \draw (a) edge (1.4,-0.1);
\end{tikzpicture},
$$
where, to fix the sign, the standard edges are considered all to be oriented into the same direction and the dotted edge becomes the first in the ordering.
\begin{lemma}
The differential $\delta_1$ on $\Gra^{\lin\dotte}_1$ squares to zero and is a derivation with respect to the operad structure.
\end{lemma}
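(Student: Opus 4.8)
The plan is to treat the two assertions separately, exploiting throughout that $\delta_1=\sum_e \delta_1^e$ is a sum of \emph{local} operators, one for each pair of vertices joined by at least two (standard) edges, and that each $\delta_1^e$ converts a pair of parallel standard edges into a dotted edge with the combinatorial weight $-n(n-1)$.

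For $\delta_1^2=0$ I would expand $\delta_1^2=\sum_{e,e'}\delta_1^{e'}\delta_1^e$ and separate diagonal from off-diagonal contributions. In the diagonal terms $e=e'$, applying $\delta_1^e$ twice to the same bundle produces two parallel dotted edges between the same pair of vertices; such graphs vanish by the antisymmetry of the dotted-edge ordering, so the diagonal contributes nothing. For $e\neq e'$ the two operators act on disjoint edge bundles, hence the valences are unaffected by the order of application, and both $\delta_1^{e'}\delta_1^e$ and $\delta_1^e\delta_1^{e'}$ produce the same underlying graph with the same coefficient $n(n-1)m(m-1)$, where $n,m$ are the two bundle sizes. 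They differ only in the order in which the two new dotted edges were inserted into the fixed ordering, and these two orderings differ by a transposition, hence by a sign $-1$. Therefore the contributions of $(e,e')$ and $(e',e)$ cancel pairwise, giving $\delta_1^2=0$.

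For the derivation property $\delta_1(\Gamma_1\circ_i\Gamma_2)=(\delta_1\Gamma_1)\circ_i\Gamma_2+(-1)^{|\Gamma_1|}\Gamma_1\circ_i(\delta_1\Gamma_2)$, I would classify the pairs of parallel standard edges appearing in each reconnection term of $\Gamma_1\circ_i\Gamma_2$. The vertex set being $(V(\Gamma_1)\setminus\{i\})\sqcup V(\Gamma_2)$, such a pair is of exactly one of three types: (a) both edges lie in $\Gamma_2$; (b) both edges lie in $\Gamma_1$ and avoid $i$; (c) both edges join a neighbour $u$ of $i$ to one and the same vertex $w$ of $\Gamma_2$, i.e.\ they arise from the bundle between $u$ and $i$ under reconnection. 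Type (a) immediately reproduces $\Gamma_1\circ_i(\delta_1\Gamma_2)$ and type (b) the ``$i$-free'' part of $(\delta_1\Gamma_1)\circ_i\Gamma_2$. The content of the statement is that type (c) reproduces exactly the contribution of $\delta_1$ acting on the $u$-$i$ bundle of $\Gamma_1$ followed by reconnection, and this is the step I expect to be the main obstacle.

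I would settle this matching by fixing a target graph $G$ carrying a dotted edge from $u$ to a chosen vertex $w_0$ together with $b_w$ standard edges from $u$ to each $w\in V(\Gamma_2)$, and computing the coefficient of $G$ in two ways. Reconnecting the $n$-fold $u$-$i$ bundle first and then applying $\delta_1^{w_0}$ to the resulting $(b_{w_0}+2)$-fold bundle gives
\[
\frac{n!}{(b_{w_0}+2)!\,\prod_{w\neq w_0} b_w!}\cdot\bigl(-(b_{w_0}+2)(b_{w_0}+1)\bigr)
= -\frac{n!}{\prod_w b_w!},
\]
whereas applying $\delta_1$ to the $n$-fold bundle first (weight $-n(n-1)$) and then reconnecting the resulting dotted edge to $w_0$ and the $n-2$ surviving standard edges according to $(b_w)$ gives $-n(n-1)\cdot\frac{(n-2)!}{\prod_w b_w!}=-\frac{n!}{\prod_w b_w!}$. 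The two agree precisely because of the factor $-n(n-1)$ built into $\delta_1^e$, which is the reason this weight is forced. What remains is sign bookkeeping — the Koszul sign $(-1)^{|\Gamma_1|}$ incurred when $\delta_1$ passes $\Gamma_1$ to reach $\Gamma_2$, the edge-orientation signs, and the convention of placing each new dotted edge first in the ordering — together with the observation that by multilinearity and locality it suffices to analyse a single $u$-$i$ bundle at a time (self-loops at $i$ reconnecting into $\Gamma_2$ being handled identically). These signs are routine and I would not carry them out in detail.
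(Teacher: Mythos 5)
Your proposal is correct and follows essentially the same route as the paper: $\delta_1^2=0$ via pairwise anticommutation of the local operators $\delta_1^e$ (with the diagonal terms vanishing because multiple dotted edges are zero), and the derivation property via the same three-way classification of edge bundles in $\Gamma_1\circ_i\Gamma_2$, reduced to the single nontrivial case of a bundle at the insertion vertex, where your two coefficient computations giving $-n!/\prod_w b_w!$ reproduce exactly the multinomial identity in the paper's commutative diagram. Like the paper, you leave the sign bookkeeping implicit, so there is nothing substantive to add.
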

\begin{proof}
We first note that operators $\delta_1^e$ and $\delta_1^{e'}$ operating on different edges $e$, $e'$ anticommute and, trivially $(\delta_1^e)^2=0$. Hence $\delta_1^2=0$. Next let us verify that the differential is compatible with the operadic insertions. Considering two graphs $\Gamma$, $\Gamma'$, we want to check that
\begin{equation}\label{equ:compatibility}
\delta_1(\Gamma \circ_1\Gamma') = \Gamma \circ_1\Gamma' \pm \Gamma \circ_1\delta_1\Gamma',
\end{equation}
where $\circ_1$ denotes insertion into the first vertex. The edges in $\Gamma \circ_1\Gamma'$ are of three sorts: (i) edges coming from edges in $\Gamma'$, (ii) edges coming from edges in $\Gamma$ not incident to vertex 1 and (iii) edges coming from edges in $\Gamma$ incident to vertex 1, that have been reconnected to some vertex in $\Gamma'$.
On the other hand, the contributions to the differentials on the right-hand side of \eqref{equ:compatibility} stem (a) from edges of $\Gamma'$ (second term), (b) from edges of $\Gamma$ not incident to vertex 1 and (c) from edges of $\Gamma$ incident to vertex one.
Clearly, the pieces of the differential $\delta_1^e$ for $e$ an edge of types (i) and (ii) on the left-hand side of \eqref{equ:compatibility} cancel the terms for edges (a) and (b) on the right hand side. The only slightly non-trivial fact is that the contributions (iii) on the left cancel the contributions (c) on the right. To check this, it is sufficient to consider a graph $\Gamma'$ without edges, and an $n$-fold edge $e$ in $\Gamma$ connecting to vertex 1.
The cancellation is then clear from the following (schematic) commutative diagram.
\[
\begin{tikzcd}
\Gamma\otimes \Gamma'  = 
\tikz{\draw node[ext] (v) {1} edge[very thick] node[left] {$\scriptstyle n$} +(0,.75) +(0,.75) node[ext] (w){}; } 
\otimes 
\tikz{\draw node[ext] {$\scriptstyle 1$} ++(.5,0) node {$\cdots$} ++(.5,0) node[ext] {$\scriptstyle k$};}
\arrow{r}{\circ_1}
\arrow{d}{\delta_1\otimes \mathit{id}}
& 
\displaystyle\sum_{\substack{j_1,\dots,j_k \\ j_1+\dots+j_k=n}} \frac{n!}{j_1!\cdots j_k!}
\tikz{\draw node[ext] (v) {$\scriptstyle 1$} ++(.5,0) node {$\cdots$} ++(.5,0) node[ext] (w) {$\scriptstyle k$};
\node[ext](x) at (.5,.75){};
\draw[very thick] (x) edge node[ left] {$\scriptstyle j_1$} (v) edge node[right] {$\scriptstyle j_k$} (w);}
\arrow{r}{\delta_1}
&
-\displaystyle\sum_{r=1}^k \displaystyle\sum_{\substack{j_1,\dots,j_k \\ j_1+\dots+j_k=n}} \frac{n! j_r(j_r-1)}{j_1!\cdots j_k!} 
\tikz{\draw node[ext] (v) {$\scriptstyle 1$} ++(.5,0) node {$\cdots$} ++(.5,0) node[ext] (r) {$\scriptstyle r$}++(.5,0) node {$\cdots$} ++(.5,0) node[ext] (w) {$\scriptstyle k$};
\node[ext](x) at (1,1){};
\draw[very thick] (x) edge node[ left] {$\scriptstyle j_1$} (v) edge node[above] {$\scriptstyle j_k$} (w)
edge node[right]{$\scriptstyle j_r-2$} (r) edge[dotted,bend right] (r);}
\arrow{d}{=}
 \\
 -n(n-1) 
 \tikz{\draw node[ext] (v) {1} edge[very thick] node[right] {$\scriptstyle n-2$} +(0,.75) edge[bend left, dotted] +(0,.75) +(0,.75) node[ext] (w){}; } 
\otimes 
\tikz{\draw node[ext] {$\scriptstyle 1$} ++(.5,0) node {$\cdots$} ++(.5,0) node[ext] {$\scriptstyle k$};}
\arrow{rr}{\circ_1}
  & 
  &
 -\displaystyle\sum_{r=1}^k \displaystyle\sum_{\substack{j_1,\dots,j_k \\ j_1+\dots+j_k=n-2}} \frac{n(n-1)(n-2)!}{j_1!\cdots j_k!}
 \tikz{\draw node[ext] (v) {$\scriptstyle 1$} ++(.5,0) node {$\cdots$} ++(.5,0) node[ext] (r) {$\scriptstyle r$}++(.5,0) node {$\cdots$} ++(.5,0) node[ext] (w) {$\scriptstyle k$};
\node[ext](x) at (1,1){};
\draw[very thick] (x) edge node[ left] {$\scriptstyle j_1$} (v) edge node[above] {$\scriptstyle j_k$} (w)
edge node[right]{$\scriptstyle j_r$} (r) edge[dotted,bend right] (r);}
  \\
\end{tikzcd}
\]
\end{proof}

There is a natural operad map $\Lie\to \Gra^{\lin\dotte}_1$ sending the generator to the graph with two vertices and one solid edge.
Now the total space (of coinvariants) of the operad $\Gra^{\lin\dotte}_1$ forms a dg Lie algebra $\fGC^{\lin\dotte}_1$, in the same manner that we build $\fGC_1$ from $\Gra_1$ in section \ref{sec:grcomplexes}. More concretely, the differential on $\fGC^{\lin\dotte}_1$ has the form $\tilde\delta=\delta + \delta_1$ where $\delta=[
\begin{tikzpicture}[baseline=-.65ex, scale=.5]
 \draw (0,0) node[int] {} -- (1,0) node[int] {};
\end{tikzpicture}
,\cdot]$
 (cf. \eqref{equ:deltadef}) and $\delta_1$ is defined as above.

There is a natural projection of dg operads $\Gra^{\lin\dotte}_1\to \Gra_1$ sending a graph without dotted edges to itself and a graph with at least one dotted edge to $0$. This projection induces a projection of dg Lie algebras between the graph complexes $f\colon\fGC^{\lin\dotte}_1\rightarrow \fGC_1$.
Indeed we are interested in the connected part of the complexes, $\fGCc_1\subset\fGC_1$ and $\fGCc^{\lin\dotte}_1\subset\fGC^{\lin\dotte}_1$, spanned by only the connected graphs. The restriction $f\colon\fGCc^{\lin\dotte}_1\rightarrow \fGCc_1$ is well defined.

The subcomplex $\fGCc_1^\nomul\subset\fGCc_1$ generated by all graphs without multiple edges is also a subcomplex of $\fGCc^{\lin\dotte}_1$. We have the following proposition.

\begin{prop}\label{prop:fGCcqiso}
The inclusion $\fGCc_1^\nomul\rightarrow\fGCc^{\lin\dotte}_1$ is a quasi-isomorphism.
\end{prop}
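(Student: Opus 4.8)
The plan is to compare the two complexes through the filtration by number of vertices and a standard spectral sequence argument, the heart of which is a computation of the cohomology of the dotted complex with respect to $\delta_1$ alone. First I would equip both $(\fGCc_1^\nomul, \delta)$ and $(\fGCc^{\lin\dotte}_1, \tilde\delta)$, where $\tilde\delta=\delta+\delta_1$, with the descending exhaustive filtration whose $p$-th piece consists of graphs with at least $p$ vertices, noting that the inclusion preserves it. Since $\delta=[m,\cdot]$ raises the vertex count by one while $\delta_1$ leaves it unchanged, the differential induced on the associated graded ($E_0$) page is $0$ on the source (on which, moreover, $\delta_1$ vanishes identically, there being no multiple and no dotted edges) and is exactly $\delta_1$ on the target. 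The source therefore has $E_1=\fGCc_1^\nomul$ with differential induced by $\delta$, and the proposition will follow once I identify the $E_1$ page of the target with $\fGCc_1^\nomul$ and check that the induced map is an isomorphism.

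The main computation is thus $H(\fGCc^{\lin\dotte}_1, \delta_1)$. Because $\delta_1=\sum_e \delta_1^e$ is a sum of operators acting independently on the individual pairs of vertices, on numbered graphs with a fixed vertex set the complex factors as a tensor product $\bigotimes_{i<j} C_{ij}$, where the local complex $C_{ij}$ is spanned by symbols $(n,\epsilon)$ recording the number $n\geq 0$ of solid edges and the presence $\epsilon\in\{0,1\}$ of a dotted edge between $i$ and $j$, with $\delta_1^e(n,0)=-n(n-1)(n-2,1)$ and $\delta_1^e(n,1)=0$. A one-line computation shows that $H(C_{ij},\delta_1^e)$ is two-dimensional, spanned by $(0,0)$ and $(1,0)$, that is, by ``no edge'' and ``a single solid edge'' (no dotted edges, no multiplicities). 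By the Künneth theorem $H(\bigotimes_{i<j}C_{ij},\delta_1)$ is spanned by the simple graphs. Since $\delta_1$ manifestly preserves the connectivity type of a graph (it never removes the last edge between a pair, nor connects distinct components), the same conclusion survives restriction to the connected part, passage to $S_N$-coinvariants (which commutes with cohomology over a field of characteristic zero), and assembly over $N$. Hence $H(\fGCc^{\lin\dotte}_1,\delta_1)=\fGCc_1^\nomul$, with classes represented by the simple connected graphs themselves.

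This identifies $E_1$ of the target with $\fGCc_1^\nomul$; because the $\delta_1$-cohomology classes are represented precisely by the image of the inclusion, the induced map on $E_1$ is the identity, hence an isomorphism, and the induced differential is $\delta$ on both sides. For convergence I would observe that the quantity $g:=s+2t-v$, with $s,t,v$ the numbers of solid edges, dotted edges and vertices respectively, is preserved by both $\delta$ and $\delta_1$, and hence by $\tilde\delta$; within a fixed value of $g$ and a fixed degree the two complexes are finite-dimensional (the constraints force $v$, $s$ and $t$ into finitely many values), so the vertex-number filtration is bounded on each such piece and both spectral sequences converge. A comparison of convergent spectral sequences (cf. Proposition \ref{prop:App1}) then shows that the inclusion inducing an isomorphism on $E_1$ is a quasi-isomorphism.

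I expect the main obstacle to be making the $\delta_1$-cohomology computation precise: although the local computation on each $C_{ij}$ is trivial, realizing $\delta_1$ as a genuine tensor-product differential and applying Künneth requires careful bookkeeping of the Koszul signs produced by the odd (dotted) edges and of their fixed ordering, together with a clean reconciliation of the tensor decomposition with the connectivity constraint and the symmetric group (co)invariants.
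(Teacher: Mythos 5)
Your proposal is correct and follows essentially the same route as the paper: both arguments filter by the number of vertices so that the associated graded differential is $\delta_1$, identify the resulting $E^1$ page of the target with $\fGCc_1^\nomul$, and conclude by convergence of the spectral sequences. The only (inessential) difference is in how $H(\fGCc^{\lin\dotte}_1,\delta_1)$ is computed: the paper splits off the subcomplex of graphs containing a dotted or double edge and contracts it with the homotopy ``dotted edge $\mapsto$ double edge'', while you obtain the same answer pair-of-vertices by pair-of-vertices via a K\"unneth decomposition --- the local computation you carry out is precisely the inverse of that homotopy.
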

\begin{proof}
On both complexes we set up a spectral sequence such that the first differential does not change the number of vertices, and we get the inclusion $(\fGCc_1^\nomul,0)\rightarrow(\fGCc^{\lin\dotte}_1,\delta_1)$. The second complex is a direct sum $(\fGCc^{\lin\dotte}_1,\delta_1)=(\fGCc_1^\nomul,0)\oplus (C,\delta_1)$ where $(C,\delta_1)\subset(\fGCc^{\lin\dotte}_1,\delta_1)$ is a subcomplex generated by graphs which have at least one dotted edge or double edge. Using a homotopy $h\colon C\rightarrow C$ which transforms the dotted edge to a double edge it can be seen that $(C,\delta_1)$ is acyclic, thus concluding the proof.
\end{proof}

We formally denote the ``dotted tadpole'' graph by
\begin{equation}\label{equ:pdef}
p:=
\begin{tikzpicture}[every loop/.style={}]
 \node[int] (a) at (0,0) {};
 \draw (a) edge[dotted,loop] (a);
\end{tikzpicture}
\end{equation}
and let $\fGC^{\lin\dotte\;p}_1:=\fGC^{\lin\dotte}_1\oplus\K p$. The connected version is $\fGCc^{\lin\dotte\;p}_1:=\fGCc^{\lin\dotte}_1\oplus\K p$. The Lie bracket on $\fGC^{\lin\dotte}_1$ naturally extends to $\fGC^{\lin\dotte\;p}_1$, the Lie bracket with $p$ being the operation of adding one dotted edge, in all possible ways.
The projection $f\colon \fGC^{\lin\dotte}_1\rightarrow\fGC_1$ extends to a function $f\colon \fGC^{\lin\dotte\;p}_1\rightarrow\fGC_1$ by setting $f(p)=0$, and it is a map of complexes.

\begin{cor}
\label{cor:dotted}
The projection $f\colon \fGCc^{\lin\dotte\;p}_1\rightarrow\fGCc_1$ is a quasi-isomorphism.
\end{cor}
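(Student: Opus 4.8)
The plan is to deduce Corollary \ref{cor:dotted} from Proposition \ref{prop:fGCcqiso} by showing that adjoining the formal class $p$ and the dotted tadpole it generates does not change the cohomology in any essential way. First I would set up the obvious commuting square of inclusions and projections: the inclusion $\fGCc^{\lin\dotte}_1 \hookrightarrow \fGCc^{\lin\dotte\;p}_1$ of complexes, together with the projection $f$ onto $\fGCc_1$, which kills $p$. Since we already know from Proposition \ref{prop:fGCcqiso} that $\fGCc_1^\nomul \hookrightarrow \fGCc^{\lin\dotte}_1$ is a quasi-isomorphism, and since $\fGCc_1^\nomul \subset \fGCc_1$ sits inside the target with $f$ restricting to the identity on graphs without dotted or multiple edges, the strategy reduces to understanding the single extra generator $p$ and its image under the (extended) differential.

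The key computation is to determine $\tilde\delta p$ in $\fGCc^{\lin\dotte\;p}_1$. By definition the bracket with $p$ adds one dotted edge in all possible ways, so the interesting part of $\tilde\delta p$ is a sum of single-vertex-with-dotted-tadpole type terms or, more precisely, the image of $p$ under $\delta + \delta_1$. The essential point I would verify is that $p$ maps (up to a nonzero scalar) onto a class that is already exact or already accounted for in $\fGCc^{\lin\dotte}_1$, so that the mapping cone of the inclusion $\fGCc^{\lin\dotte}_1 \hookrightarrow \fGCc^{\lin\dotte\;p}_1$ is acyclic. Equivalently, I would argue that the one-dimensional extension by $\K p$ contributes exactly one new generator and one new relation that cancel in cohomology, so that the inclusion $\fGCc^{\lin\dotte}_1 \hookrightarrow \fGCc^{\lin\dotte\;p}_1$ is itself a quasi-isomorphism. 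Composing this with the quasi-isomorphism of Proposition \ref{prop:fGCcqiso} and with the identification of $\fGCc_1^\nomul$ inside $\fGCc_1$ under $f$ then yields the claim.

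Concretely, I would run a short spectral sequence or mapping-cone argument analogous to the proof of Proposition \ref{prop:fGCcqiso}. Filtering by number of vertices (or by number of dotted edges), the associated graded differential is $\delta_1$ plus the dotted-edge-adding operation $[p,\cdot]$; one checks that on this page the new generator $p$ is paired with a specific graph it produces under the differential, exhibiting an acyclic complementary summand $(C',\delta')$ exactly as in Proposition \ref{prop:fGCcqiso}. The homotopy used there, which trades a dotted edge for a double edge, should extend to the enlarged complex after including the term coming from $p$. Having shown acyclicity of the complementary summand, it follows that $f$ induces an isomorphism on cohomology.

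The main obstacle I anticipate is bookkeeping the signs and the degree of the generator $p$ correctly: since dotted edges carry degree $1$ and tadpoles of the solid kind are normally excluded, the formal dotted tadpole $p$ has a prescribed degree and its image under $\delta_1$ must be computed carefully, keeping track of the ordering convention on dotted edges (recall graphs with two dotted edges vanish). One must confirm that $[p,\cdot]$ genuinely commutes, up to sign, with $\delta_1$ so that $\tilde\delta$ extends to a differential on $\fGCc^{\lin\dotte\;p}_1$, and that the homotopy contraction of the complementary summand is compatible with the $p$-term. Once these sign and degree checks are in place, the acyclicity and hence the quasi-isomorphism follow formally.
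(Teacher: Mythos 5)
Your overall strategy (reduce to $\fGCc_1^\nomul$ via Proposition \ref{prop:fGCcqiso} and then handle the one extra generator $p$) is the right family of ideas, but as written it has two genuine gaps. First, the claim that the inclusion $\fGCc^{\lin\dotte}_1\hookrightarrow\fGCc^{\lin\dotte\;p}_1$ is a quasi-isomorphism (equivalently, that the cone on it is acyclic) is false, and the failure is exactly the point of the construction. One computes $\tilde\delta p=\zeta$, where $\zeta$ is the graph with one solid and one dotted edge between two vertices; but $\zeta$ is \emph{already exact} in $\fGCc^{\lin\dotte}_1$ (it is, up to a factor, $\delta_1$ of the double edge, and more structurally it lies in the acyclic summand $C$ from the proof of Proposition \ref{prop:fGCcqiso}). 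Hence adjoining $p$ does not cancel anything; it \emph{creates} exactly one new cohomology class. This is not a defect to be argued away: that new class is the preimage under $f$ of the theta class $[\Theta]\in H(\fGCc_1)$, represented by $\Theta+6p$, and it is needed for $f$ to be surjective on cohomology. The paper's proof makes this explicit by working with the subcomplex $\fGCc_1^{\nomul\;\Theta p\zeta}=\fGCc_1^\nomul\oplus\K\Theta\oplus\K p\oplus\K\zeta$, in which $\tilde\delta$ sends $\Theta\mapsto -6\zeta$ and $p\mapsto\zeta$, so that the three extra generators contribute the single class $[\Theta+6p]$.

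Second, and more seriously, your argument never addresses the passage from $\fGCc_1^\nomul$ to the actual target $\fGCc_1$ of $f$. Knowing that the source is quasi-isomorphic to $(\fGCc_1^\nomul,\delta)$ plus one class says nothing about the map to $\fGCc_1$ unless you also know that the inclusion $\fGCc_1^{\nomul\;\Theta}\hookrightarrow\fGCc_1$ is a quasi-isomorphism, i.e.\ that graphs with multiple solid edges contribute nothing to $H(\fGCc_1,\delta)$ beyond the theta class. This is not formal and is not contained in Proposition \ref{prop:fGCcqiso}; the paper imports it as \cite[Theorem 2]{eulerchar}. Without this external input (or a proof of it) your chain of quasi-isomorphisms does not reach $\fGCc_1$, and the corollary does not follow.
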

\begin{proof}
Let $\Theta:=
\begin{tikzpicture}[baseline=-.65ex, scale=.5]
  \node[int] (v) at (0,.5){};
  \node[int] (w) at (0,-.5){};
  \draw (v) edge (w) edge[bend left=35] (w) edge[bend right=35] (w);
 \end{tikzpicture}$
and $\zeta:=
\begin{tikzpicture}[baseline=-.65ex,scale=.5]
 \node[int] (a) at (0,.5) {};
 \node[int] (b) at (0,-.5) {};
 \draw (a) edge[dotted,bend right=20] (b);
 \draw (a) edge[bend left=20] (b);
\end{tikzpicture}$.
Let $\fGCc_1^{\nomul\;\Theta}:=\fGCc_1^\nomul\oplus\K\Theta$ be the subcomplex of $(\fGCc_1,\delta)$ spanned by $\fGCc_1^\nomul$ and the graph $\Theta$, and let similarly $\fGCc_1^{\nomul\;\Theta p \zeta}:=\fGCc_1^\nomul\oplus\K\Theta\oplus\K p\oplus\K\zeta$ be the subcomplex of $(\fGCc^{\lin\dotte\;p}_1,\tilde\delta)$ spanned by $\fGCc_1^\nomul$ and the graphs $\Theta$, $p$ and $\zeta$.
It follows from Proposition \ref{prop:fGCcqiso} that the inclusion $\fGCc_1^{\nomul\;\Theta p \zeta}\rightarrow\fGCc^{\lin\dotte\;p}_1$ is a quasi-isomorphism. Clearly, the projection $\fGCc_1^{\nomul\;\Theta p \zeta}\rightarrow\fGCc_1^{\nomul\;\Theta}$ is also a quasi-isomorphism. \cite[Theorem 2]{eulerchar} implies that the inclusion $\fGCc_1^{\nomul\;\Theta}\rightarrow\fGCc_1$ is also a quasi-isomorphism. Composing all these maps on the level of homology leads to the result.
\end{proof}

\subsection{Extra differential on the dotted complex}
We consider the following degree 1 element of $\fGC^{\lin\dotte\;p}_1$:
\begin{equation}\label{equ:tildemdef}
\tilde m \; := \; m \; + p
= \;
\sum_{j\geq 1}
\frac{1}{(2j+1)!} \;
\begin{tikzpicture}[baseline=-.65ex,scale=.8]
 \node[int] (v) at (0,.5) {};
 \node[int] (w) at (0,-0.5) {};
 \draw (v) edge[very thick] node[right] {$\scriptstyle 2j+1$} (w);
\end{tikzpicture}
+
\begin{tikzpicture}[every loop/.style={}]
 \node[int] (a) at (0,0) {};
 \draw (a) edge[dotted,loop] (a);
\end{tikzpicture}.
\end{equation}

\begin{lemma}
The element $\tilde m$ is a Maurer-Cartan element in $\fGC^{\lin\dotte\;p}_1$, i.~e., 
$$\tilde\delta\tilde m + \frac{1}{2}\co{\tilde m}{\tilde m}=0.$$
\end{lemma}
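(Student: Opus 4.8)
The plan is to expand the equation, recognise the part that reproduces the (already established) Maurer--Cartan relation for $m$, and reduce everything to a single new identity supported on two-vertex graphs. Writing $\tilde\delta=\delta+\delta_1$ and $\tilde m=m+p$, and using that $m$ and $p$ both have degree $1$ (so $\co{m}{p}=\co{p}{m}$ and $\co{p}{p}=2\,p\bullet p$), I would first expand
\[
\tilde\delta\tilde m+\tfrac12\co{\tilde m}{\tilde m}
=\Big(\delta m+\tfrac12\co{m}{m}\Big)+\delta_1 m+\delta p+\delta_1 p+\co{m}{p}+\tfrac12\co{p}{p}.
\]
The first bracketed term vanishes: it is exactly the Maurer--Cartan relation for $m$ proved in the first Lemma of this section, and it continues to hold verbatim here because $\delta m$ and $m\bullet m$ only ever produce graphs with standard edges, on which $\delta_1$ plays no role.

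Next I would dispose of the two manifestly trivial terms. Since $p$ has no standard (let alone multiple) edge, $\delta_1 p=0$. And $p\bullet p$ reconnects the two half-edges of the dotted loop of the outer $p$ onto the single vertex of the inner $p$, producing a vertex carrying two dotted loops, which is zero since dotted tadpoles are forbidden; hence $\co{p}{p}=0$. Writing $m_0$ for the single (standard) edge graph, so that $\delta=\co{m_0}{\cdot}$ and $m'=m+m_0$, I would observe $\delta p=\co{m_0}{p}$ and combine $\delta p+\co{m}{p}=\co{m'}{p}=p\bullet m'$ (the other pre-Lie term $m'\bullet p$ vanishes, again by the dotted-tadpole rule). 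This reduces the whole statement to the single identity $\delta_1 m+p\bullet m'=0$.

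Both sides live on the two-vertex graphs $\zeta_n$ consisting of an $n$-fold standard edge together with one dotted edge between two vertices (so $\zeta_1=\zeta$). Indeed, writing $X_n$ for the two-vertex graph with an $n$-fold standard edge, one has $m'=\sum_{j\ge0}\frac1{(2j+1)!}X_{2j+1}$; the operation $p\bullet(-)$ adds one dotted edge in all possible ways, turning $X_{2k+1}$ into $\zeta_{2k+1}$, while $\delta_1$ turns one of the parallel edges of $X_{2k+3}$ into a dotted edge, giving $\delta_1X_{2k+3}=-(2k+3)(2k+2)\,\zeta_{2k+1}$. Matching the coefficient of $\zeta_{2k+1}$ on each side, the contribution from $\delta_1 m$ is $-\frac{(2k+3)(2k+2)}{(2k+3)!}=-\frac1{(2k+1)!}$, while the contribution from $p\bullet m'$ is $\frac1{(2k+1)!}$ provided that adding a dotted edge between a fixed pair of vertices carries coefficient exactly $1$. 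These cancel for every $k$, which proves the identity.

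The main obstacle is exactly this last coefficient bookkeeping. One must fix the sign conventions so that reversal of a single dotted edge carries no sign (the same condition that makes $\zeta_n\neq0$), and correctly account for the order-two symmetry of the dotted loop of $p$, which cancels the factor $2$ coming from the two reconnections of its half-edges and leaves edge-addition with weight $1$ per vertex pair, in exact analogy with the weight of $\nabla$ in the even case. Once these conventions are pinned down, the remaining content is the telescoping factorial identity $\frac{(2k+3)(2k+2)}{(2k+3)!}=\frac1{(2k+1)!}$, and everything else in the argument is formal.
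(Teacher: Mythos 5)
Your proposal is correct and follows essentially the same route as the paper's proof: expand using $\tilde\delta=\delta+\delta_1$ and $\tilde m=m+p$, observe that everything except $\delta_1 m+\co{m'}{p}$ vanishes (the Maurer--Cartan equation for $m$, the forbidden dotted tadpoles, $\delta_1 p=0$), and cancel the two remaining terms via the reindexing identity $\frac{(2k+3)(2k+2)}{(2k+3)!}=\frac{1}{(2k+1)!}$. The only cosmetic difference is that the paper absorbs $\delta p=\zeta$ into $\co{m'}{p}$ directly rather than writing $\delta p=\co{m_0}{p}$, and it states $\delta_1 m=-\co{m'}{p}$ as a single reindexed sum instead of matching coefficients of each $\zeta_{2k+1}$.
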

\begin{proof}
It holds that
\begin{align*}
\tilde\delta\tilde m + \frac{1}{2}\co{\tilde m}{\tilde m} & =
\delta  m + \frac{1}{2}\co{m}{m} + \delta
p
+\delta_1m + \delta_1p + \co{m}{p}
+\frac{1}{2}
\co{p}{p}\\
&=
\zeta+
\delta_1 m + \co{m}{p} = 
\delta_1 m + \co{m'}{p}.
\end{align*}
But now
\[
\delta_1 m
=
\sum_{j\geq 1}
\frac{-2j(2j+1)}{(2j+1)!} \;
\begin{tikzpicture}[baseline=-.65ex,scale=.8]
 \node[int] (v) at (0,.5) {};
 \node[int] (w) at (0,-0.5) {};
 \draw (v) edge[very thick] node[right] {$\scriptstyle 2j-1$} (w)
       (v) edge[dotted, bend right] (w);
\end{tikzpicture}
=
-
 \sum_{j\geq 0}
\frac{1}{(2j+1)!} \;
\begin{tikzpicture}[baseline=-.65ex,scale=.8]
 \node[int] (v) at (0,.5) {};
 \node[int] (w) at (0,-0.5) {};
 \draw (v) edge[very thick] node[right] {$\scriptstyle 2j+1$} (w)
       (v) edge[dotted, bend right] (w);
\end{tikzpicture}
=
-\co{m'}{p}
\]
and hence the Lemma is shown.
\end{proof}

Because of the lemma we may compute the cohomology of $\fGC^{\lin\dotte\;p}_1$ with respect to the differential $\tilde\delta + \co{\tilde m}{\cdot} = \delta_1 + \co{m'+p}{\cdot}$. We have $f(\tilde m)=m$, so $f$ is a map of complexes $(\fGCc^{\lin\dotte\;p}_1,\tilde\delta + \co{\tilde m}{\cdot})\rightarrow(\fGCc_1,\delta+\co{m}{\cdot})$. We need a general statement.

\begin{prop}\label{prop:9}
Let $C$ and $D$ be differential graded Lie algebras equipped with descending complete filtrations $C=\mF^0C\supset \mF^1C\supset\cdots$, $D=\mF^0D\supset \mF^1D\supset\cdots$. We assume that these filtrations are compatible with the Lie structure in the sense that $[\mF^pC, \mF^qC]\subset \mF^{p+q}C$ etc., and that the spaces $\mF^pC/\mF^{p+1}C$ and $\mF^pD/\mF^{p+1}D$ are finte dimensional in each (cohomological) degree. Let $m\in \mF^1C$ be Maurer-Cartan element.
If $f\colon C\rightarrow D$ be a morphism of differential graded Lie algebras respecting the filtrations, and inducing a quasi-isomorphism on the associated graded complexes.
Then $f(m)\in \mF^1D$ is Maurer-Cartan element in $D$ and $f$ induces a quasi-isomorphism of the twisted dg Lie algebras $(C,\delta + [m,\cdot])\to (D,\delta + [f(m),\cdot])$.
\end{prop}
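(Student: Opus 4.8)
The plan is to reduce the statement to a comparison of the associated graded complexes and then invoke a standard convergence theorem for complete filtered complexes. First I would dispose of the easy claim that $f(m)$ is Maurer-Cartan. Since $f$ is a morphism of dg Lie algebras, $\delta f(m)+\frac{1}{2}[f(m),f(m)] = f(\delta m + \frac{1}{2}[m,m]) = 0$, and $f(m)\in \mF^1D$ because $f$ respects the filtrations and $m\in\mF^1C$. In particular the twisted differential $\delta+[f(m),\cdot]$ on $D$ is defined, and $f$ is a chain map for the twisted differentials: this follows from $f\delta=\delta f$ together with $f([m,x])=[f(m),f(x)]$.

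The key observation is that twisting does not affect the associated graded. Since $m\in\mF^1C$, the compatibility $[\mF^1C,\mF^pC]\subset\mF^{p+1}C$ shows that $[m,\cdot]$ maps $\mF^pC$ into $\mF^{p+1}C$; hence $\delta+[m,\cdot]$ preserves the filtration and induces on $\gr C$ exactly the differential $\gr\delta$, the bracket term dying because it strictly raises the filtration degree. The same holds on $D$. Consequently the morphism induced by $f$ on the associated graded of the twisted complexes coincides with the morphism on the associated graded of the untwisted complexes, which is a quasi-isomorphism by hypothesis. Thus $f$ is a filtered chain map $(C,\delta+[m,\cdot])\to(D,\delta+[f(m),\cdot])$ whose associated graded is a quasi-isomorphism.

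Finally I would conclude by the comparison theorem for complete filtered complexes: a filtered morphism inducing a quasi-isomorphism on associated graded is a quasi-isomorphism, provided the filtrations are complete and exhaustive. This is the main obstacle, and it is precisely where the hypotheses enter: completeness guarantees that the induced filtration on cohomology is again complete and Hausdorff, while the degreewise finite-dimensionality of the graded pieces provides the Mittag-Leffler condition ruling out $\lim^1$ obstructions, so that the spectral sequences of the two filtrations converge strongly to the cohomology of the respective total complexes. Concretely I expect to invoke the convergence statement recorded as Proposition \ref{prop:App1}: the quasi-isomorphism of associated graded complexes gives an isomorphism on the $E_1$ pages, hence inductively (via the homology of an isomorphism of pages) on every later page $E_r$ and on $E_\infty$, hence on $\gr H$; completeness together with finiteness then upgrades this to an isomorphism on $H$ itself. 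Since the filtrations are not assumed bounded, the argument genuinely cannot be carried out on a single page, and this passage to the limit is the only non-formal point.
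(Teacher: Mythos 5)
Your argument is essentially the paper's own: twist, observe that $[m,\cdot]$ strictly raises the filtration so the associated graded of the twisted complexes agrees with the untwisted one, compare the spectral sequences page by page, and conclude by completeness plus degreewise finite-dimensionality of the graded pieces. The one correction is your citation: since the filtrations are not bounded, the relevant convergence statement is Proposition \ref{prop:App2} (weak convergence via the Complete Convergence Theorem under the finite-dimensionality hypothesis), not Proposition \ref{prop:App1} — as you yourself note, the bounded-filtration argument does not apply here, and your description of the mechanism (completeness plus finiteness ruling out $\lim^1$ issues) is exactly the content of Proposition \ref{prop:App2}.
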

\begin{proof}
Consider the spectral sequences on $C$ and $D$ arising from the filtrations given. Since $f$ is a quasi-isomorphism on that page by assumption, it will be isomorphism on the second page. Remaining differentials on $D$ are defined using $f$ from those on $C$, so they commute with $f$ and all the remaining pages are the same. Using the finite-dimensionality constraint together with Proposition \ref{prop:App2} we see that both spectral sequences weakly converge to the cohomologies of complexes, hence the result.
\end{proof}

\begin{cor}
\label{cor:qi_dotted-extra}
The projection $f\colon (\fGCc^{\lin\dotte\;p}_1,\tilde\delta+\co{\tilde m}{\cdot})\rightarrow(\fGCc_1,\delta + \co{m}{\cdot})$ is a quasi-isomorphism.
\end{cor}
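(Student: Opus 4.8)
The plan is to deduce the statement from Proposition \ref{prop:9}, applied to the projection $f$ viewed as a morphism of dg Lie algebras $C=\fGCc^{\lin\dotte\;p}_1$ (with differential $\tilde\delta$) and $D=\fGCc_1$ (with differential $\delta$), twisted by the Maurer-Cartan element $\tilde m\in C$. We already know from Corollary \ref{cor:dotted} that $f\colon(C,\tilde\delta)\to(D,\delta)$ is a quasi-isomorphism of the \emph{untwisted} complexes, and from the preceding Lemma that $\tilde m$ is Maurer-Cartan. What remains is to exhibit a filtration on $C$ and $D$ satisfying the hypotheses of Proposition \ref{prop:9} whose associated-graded statement reproduces Corollary \ref{cor:dotted}.

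For a graph $\Gamma$ let $b(\Gamma)$ denote its first Betti number (number of independent cycles, i.e.\ $\#\text{edges}-\#\text{vertices}+1$) and $d(\Gamma)$ the number of dotted edges, and set $w(\Gamma)=b(\Gamma)+d(\Gamma)$. I would filter $C$ and $D$ by $\mF^p=\{w\geq p\}$. The key computation is that both summands of $\tilde\delta=\delta+\delta_1$ preserve $w$: vertex splitting $\delta$ adds one vertex and one solid edge and so leaves both $b$ and $d$ unchanged, while $\delta_1$ replaces two parallel solid edges by one dotted edge, lowering $b$ by one and raising $d$ by one, so $w$ is again preserved. Since the bracket combines two graphs by inserting one at a vertex of the other, both Betti numbers and dotted-edge counts add, whence $w$ is additive and $[\mF^p,\mF^q]\subset\mF^{p+q}$. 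One checks that $w(\tilde m)\geq 2$: each two-vertex $(2j+1)$-fold-edge summand of $m$ has $b=2j\geq 2$ and $d=0$, while the dotted tadpole $p$ has $b=d=1$; hence $\tilde m\in\mF^2\subset\mF^1$, so the twisting term $[\tilde m,\cdot\,]$ strictly raises the filtration. Finally, in each fixed cohomological degree the degree formula $\lvert\Gamma\rvert=\#\{\text{vertices}\}+d(\Gamma)-1$ bounds the numbers of vertices and of dotted edges, and then fixing $w$ pins down the number of solid edges as well; thus each $\mF^p/\mF^{p+1}$ is finite-dimensional in each degree, and the filtration is complete (a fixed-degree element is a possibly infinite series in the number of edges, i.e.\ in $p$).

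With these observations Proposition \ref{prop:9} applies. The projection $f$ manifestly respects the $w$-grading: it fixes dotted-edge-free graphs (for which $w=b$ is unchanged) and annihilates the rest, so $f(C_w)\subset D_w$. Since $\tilde\delta$ and $\delta$ preserve $w$ exactly, the associated-graded map $\gr f$ is simply $f\colon(C,\tilde\delta)\to(D,\delta)$ decomposed over the values of $w$; being a quasi-isomorphism after taking $\gr$ is therefore equivalent to $f$ itself being a quasi-isomorphism of the untwisted complexes, which is exactly Corollary \ref{cor:dotted}. Proposition \ref{prop:9} then yields that $f$ is a quasi-isomorphism of the twisted complexes $(C,\tilde\delta+[\tilde m,\cdot\,])\to(D,\delta+[m,\cdot\,])$, which is the claim. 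The only genuinely delicate point is the choice of filtration: neither the vertex count nor the edge count is preserved by all of $\tilde\delta$ (in each case $\delta_1$ spoils it), and bracketing with the dotted tadpole $p$ preserves the vertex count, so any naive vertex or edge filtration fails one of the two hypotheses; it is precisely the combination $w=b+d$ that is simultaneously preserved by the whole of $\tilde\delta$ and strictly raised by $[\tilde m,\cdot\,]$.
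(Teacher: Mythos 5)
Your proposal is correct and follows essentially the same route as the paper: both apply Proposition \ref{prop:9} with Corollary \ref{cor:dotted} supplying the quasi-isomorphism on the associated graded, and your filtration weight $w=b+d$ coincides (up to the constant shift $+1$) with the paper's ``Lie degree'' $e+2d-v$ counting solid edges. The only difference is that you spell out the verification of the hypotheses (preservation of $w$ by $\delta$ and $\delta_1$, additivity under the bracket, $\tilde m\in\mF^1$, and finite-dimensionality) that the paper leaves to the reader.
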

\begin{proof}
We define the "Lie degree" of a graph to be the number of edges plus twice the number of dotted edges minus the number of vertices, and the filtration such that the $p$-th subspace is spanned by graphs of Lie degree $\geq p$. The finite dimensionality condition in Proposition \ref{prop:9} is easily checked, and using Corollary \ref{cor:dotted}, the Proposition implies the result.
\end{proof}

\subsection{Waved complex}

Let $\fGC^\snak_{\text{full}}$ be a graph complex generated by graphs of vertices of degree 1 and directed edges
\begin{tikzpicture}[baseline=-.65ex,scale=.5]
 \node[int] (a) at (0,0) {};
 \node[int] (c) at (1,0) {};
 \draw (a) edge[snakeit,->] (c);
\end{tikzpicture}
of degree $0$. The direction of the edge can not be reversed, and every pair of vertices is connected with exactly one edge, of some orientation.
In other words, disregrading the edge orientations all the graphs are full graphs.


One differential on $\fGC^\snak_{\text{full}}$ is $d_1(\Gamma)=\sum_{x\in V(\Gamma)}d_x(\Gamma)$ where $d_x$ adds another vertex $v+1$, connects the vertices $x$ and $v+1$
\begin{tikzpicture}[baseline=-.65ex,scale=.5]
 \node[int] (a) at (0,0) {};
 \node[below] at (a) {$\scriptstyle x$};
 \node[int] (c) at (1,0) {};
 \node[below] at (c) {$\scriptstyle v+1$};
 \draw (a) edge[snakeit,->] (c);
\end{tikzpicture},
and creates for every other vertex $y$ of $\Gamma$ an edge from $y$ to $v+1$ or from $v+1$ to $y$, depending on whether the edge between $y$ and $x$ was oriented from $y$ to $x$ or vice versa.
Another differential is $d:=d_1-d_\text{in}+d_\text{out}$ where
$d_\text{in}$ adds the vertex $v+1$ and connects every other vertex to it with an edge directed towards it, and $d_\text{out}$ does the same with the opposite direction. It can easily be seen that both of them are differentials, i.e.\ that $d_1^2=d^2=0$.

%
%

\begin{prop}
\label{prop:waved_acyclic}
The complex $(\fGC^\snak_{\text{full}},d)$ is acyclic.
\end{prop}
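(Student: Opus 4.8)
The plan is to produce an explicit contracting homotopy, in the same spirit as the homotopy $\tilde h$ appearing in the proof of Proposition \ref{prop:evenD}, but now \emph{removing a vertex} instead of an edge, since all three constituents $d_1$, $d_{\text{in}}$, $d_{\text{out}}$ of $d$ add a vertex. The structural observation I would build on is that $d_{\text{in}}$, which glues on a new universal sink (a vertex into which every edge points), is injective, with image exactly the span $A$ of those full graphs possessing a universal sink; and that a universal sink, when present, is unique, since two of them would have to be joined by an edge oriented in both directions. I may therefore define unambiguously a degree $-1$ operator $h$ by letting $h\Gamma$ be a signed copy of the graph obtained by deleting the universal sink of $\Gamma$ when one exists, and $h\Gamma=0$ otherwise; on $A$ this is just the inverse of $d_{\text{in}}$ up to sign.

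Next I would compute $\phi:=dh+hd$ on the complement $B$ of $A$ and on $A$ separately. If $\Gamma\in B$ has no universal sink, then $h\Gamma=0$, and one checks that neither $d_1\Gamma$ nor $d_{\text{out}}\Gamma$ acquires a universal sink, so $h$ kills them; only $hd_{\text{in}}\Gamma$ survives, returning $\pm\Gamma$, whence $\phi=\pm\mathrm{id}$ on $B$. If instead $\Gamma\in A$ with universal sink $u$, set $\Gamma'=\Gamma\setminus u$; then $d(h\Gamma)=\pm d\Gamma'=\pm(d_1\Gamma'-d_{\text{in}}\Gamma'+d_{\text{out}}\Gamma')$. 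Expanding $h(d\Gamma)$ termwise, the clones $d_y\Gamma$ with $y\neq u$ and the graph $d_{\text{out}}\Gamma$ all retain $u$ as their (unique) universal sink, so applying $h$ reproduces $\pm d_y\Gamma'$ and $\pm d_{\text{out}}\Gamma'$, which cancel the corresponding summands of $d(h\Gamma)$. Using the identity $d_{\text{in}}\Gamma'=\Gamma$ (re-attaching a universal sink to $\Gamma'$ recovers $\Gamma$), one is left with exactly three ``diagonal'' contributions — the $d_{\text{in}}\Gamma'$ summand of $d(h\Gamma)$, the clone term $hd_u\Gamma$, and $hd_{\text{in}}\Gamma$ — each equal to $\pm\Gamma$. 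Hence $\phi\Gamma$ is a sum of an odd number of signs times $\Gamma$, i.e. a nonzero ($\pm1$ or $\pm3$) multiple of $\Gamma$. Thus $\phi$ acts diagonally with invertible scalar entries; since $\phi=dh+hd$ commutes with $d$ its inverse is again a chain map, so $H':=\phi^{-1}h$ satisfies $dH'+H'd=\mathrm{id}$ and the complex is acyclic.

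The main obstacle is the sign bookkeeping. Concretely, I must fix the sign in the definition of $h$ — the Koszul sign incurred when the odd universal-sink vertex is deleted past the remainder of the graph — so that on $A$ the auxiliary terms $d_y\Gamma'$ and $d_{\text{out}}\Gamma'$ produced by $d(h\Gamma)$ and by $h(d\Gamma)$ genuinely cancel rather than add. Granting this cancellation, the nonvanishing of the surviving coefficient of $\Gamma$ is automatic, being a sum of an odd number of $\pm1$'s; this is precisely why the ``remove the universal sink'' homotopy is preferable to the naive ``sum over all vertex deletions'', whose diagonal coefficient would instead be a sign-sensitive integer of the rough form $2n-1+1$ that one would separately have to prove nonzero for every vertex number $n$.
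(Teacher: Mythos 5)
Your proposal is correct and is essentially the paper's own argument: the paper uses exactly this homotopy (delete the unique ``maximum'' vertex, i.e.\ the universal sink, if it exists, else $0$) and the same case split, observing that $d_1$ and $d_{\text{out}}$ preserve the (non)existence of such a vertex while $d_{\text{in}}$ always creates one, arriving at $hd+dh=-\Id$. Your extra step of inverting the diagonal operator $dh+hd$ is a harmless robustness measure; with the signs fixed as you indicate the three diagonal terms in fact sum to exactly $-\Gamma$.
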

\begin{proof}
We look for a homotopy $h\colon \fGC^\snak_{\text{full}}\rightarrow\fGC^\snak_{\text{full}}$ of degree $-1$ such that $hd+dh=-\Id$.

Let \emph{the maximum} vertex in the graph $\Gamma$ be the vertex for which all adjacent edges head towards it, if such a vertex exists. Let the map $h$ delete the maximum vertex of a graph $\Gamma$ and its adjacent edges if such a vertex exists, and set $h(\Gamma)=0$ otherwise. The maps $d_1$ and $d_\text{out}$ can not create or remove the maximum vertex, so if there is no such vertex in $\Gamma$ then $(hd+dh)(\Gamma)=-hd_\text{in}(\Gamma)=-\Gamma$. On the other hand, If $x$ is the maximum vertex, then $(hd+dh)(\Gamma)=(hd_x-hd_\text{in}-d_\text{in}h)(\Gamma)=-\Gamma$.
\end{proof}

We construct a map $g\colon\fGC^\snak_{\text{full}}\rightarrow\fGC^{\lin\dotte}_1$ which maps a graph to the graph with the same vertices, and edges
\begin{tikzpicture}[baseline=-.65ex,scale=.5]
 \node[int] (a) at (0,0) {};
 \node[below] at (a) {$\scriptstyle x$};
 \node[int] (c) at (1,0) {};
 \node[below] at (c) {$\scriptstyle y$};
 \draw (a) edge[snakeit,->] (c);
\end{tikzpicture}
are replaced with
$$
\sum_{j\geq 0} \frac{1}{j!} \;
\begin{tikzpicture}[baseline=-.65ex,scale=.8]
 \node[int] (v) at (-.5,0) {};
 \node[below] at (v) {$\scriptstyle x$};
 \node[int] (w) at (.5,0) {};
 \node[below] at (w) {$\scriptstyle y$};
 \draw (v) edge[very thick] node[above] {$\scriptstyle j$} (w);
\end{tikzpicture}
$$
where the direction of all edges is the same as the direction of the former waved edge (from $x$ to $y$).

\begin{prop}
The map $g:(\fGC^\snak_{\text{full}},d)\rightarrow(\fGC^{\lin\dotte}_1,\tilde\delta+\co{\tilde m}{\cdot})$ is a map of complexes, i.e.\ $g(d\Gamma)=\tilde\delta g(\Gamma)+\co{\tilde m}{g(\Gamma)}$.
\end{prop}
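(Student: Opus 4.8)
The plan is to verify the chain-map identity directly, rewriting the target differential in the form used earlier, $\tilde\delta + \co{\tilde m}{\cdot} = \delta_1 + \co{m'}{\cdot} + \co{p}{\cdot}$, so that the claim becomes
\[
g(d\Gamma) = \delta_1 g(\Gamma) + \co{m'}{g(\Gamma)} + \co{p}{g(\Gamma)}.
\]
The organizing observation is that, since $\Gamma$ is a full graph, $g(\Gamma)$ is supported on graphs in which \emph{every} pair of vertices carries the complete exponential bundle $\sum_{j\ge 0}\tfrac1{j!}(j\text{-fold edge})$ of solid edges with a fixed orientation. I will use this twice.

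Step A: the vertex-preserving terms cancel, i.e.\ $\delta_1 g(\Gamma) + \co{p}{g(\Gamma)} = 0$. Both operators preserve the vertex set, so this can be checked one vertex-pair at a time. On a pair carrying the exponential bundle $B=\sum_j \tfrac1{j!}E^j$ one computes $\delta_1 B = -D\,B$, where $D$ is the dotted edge on that pair, by the same resummation that gives $\delta_1 m = -\co{m'}{p}$ in the Maurer--Cartan lemma; meanwhile $\co{p}{\cdot}$ adds one dotted edge, contributing exactly $+D\,B$. Because $\Gamma$ is full, every pair on which $\co{p}{\cdot}$ can act already carries $B$, so all contributions cancel and only $\co{m'}{g(\Gamma)}$ survives on the right.

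Step B: match the vertex-creating terms, i.e.\ prove $\co{m'}{g(\Gamma)} = g(d_1\Gamma) - g(d_\text{in}\Gamma) + g(d_\text{out}\Gamma)$. I would analyze the two halves of the bracket. The half $g(\Gamma)\bullet m'$ inserts $m'$ at a vertex $x$, splitting it and redistributing the incident edges; since redistributing an exponential bundle reproduces a product of two independent exponential bundles (the identity $\sum_{b+c=a}\tfrac1{a!}\binom ab = \tfrac1{b!c!}$), this recovers exactly the parallel-duplication pattern of $d_1$, except that the edge joining $x$ to the new vertex is the odd bundle $\sum_k\tfrac1{(2k+1)!}E^{2k+1}$ rather than the full exponential produced by $g$ on the waved edge of $d_x\Gamma$. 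The half $m'\bullet g(\Gamma)$ attaches a new, uniformly oriented vertex through the odd bundle, matching the odd-multiplicity parts of $g(d_\text{in}\Gamma)$ and $g(d_\text{out}\Gamma)$. I would then reconcile the two pictures using the decomposition $\exp = \sinh + \cosh$: the even ($\cosh$) discrepancy on the split edge, together with the missing even-multiplicity parts of the uniformly oriented vertices, must be supplied precisely by the signed correction $-d_\text{in} + d_\text{out}$.

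The main obstacle is the sign and orientation bookkeeping in Step B. The bracket $\co{m'}{\cdot}$ produces edges of a single orientation inherited from the oriented edge of $m'$, whereas $d_1$ produces edges whose orientations track those of $\Gamma$; reconciling them relies on the antisymmetry of odd-fold edges under reversal in $\fGCc_1$ (which is also why $m'$ carries only odd multiplicities), and on checking that the two insertion points of $m'$ combine with the $\cosh$ terms to yield $-d_\text{in}+d_\text{out}$ with the correct signs. The remaining points --- compatibility of $g$ with the symmetry identifications and with the degree conventions --- are routine.
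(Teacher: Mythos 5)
Your overall architecture coincides with the paper's: rewrite the target differential as $(\delta_1+\co{p}{\cdot})+\co{m'}{\cdot}$, show the vertex-preserving part $\delta_1+\co{p}{\cdot}$ annihilates the image of $g$ (your Step A, which is correct and is exactly the paper's first reduction), and then match $\co{m'}{g(\Gamma)}$ against $g(d\Gamma)$ by identifying $g(\Gamma)\bullet m'$ with the vertex-splitting part $d_1=\sum_x d_x$ and $m'\bullet g(\Gamma)$ with $d_{\text{out}}-d_{\text{in}}$. That identification, and the multinomial resummation showing that redistributing an exponential bundle yields a product of exponential bundles, are also the paper's.

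The gap is in your reconciliation of the ``odd bundle versus full exponential'' discrepancy in Step B. You propose that the even ($\cosh$) defect on the split edge, together with the missing even-multiplicity parts at the uniformly attached new vertex, is ``supplied by the signed correction $-d_{\text{in}}+d_{\text{out}}$''. This cannot work: in the split-vertex terms the new vertex is attached with orientations tracking those of $\Gamma$ at $x$, whereas the terms coming from $d_{\text{in}}$ and $d_{\text{out}}$ have all edges at the new vertex uniformly oriented, so these are different isomorphism classes of graphs and cannot cancel or supply one another; moreover $-d_{\text{in}}+d_{\text{out}}$ is already fully consumed in matching $m'\bullet g(\Gamma)$ and is not available as a spare correction. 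The fact you need --- and which the paper isolates as its one-line key step --- is the identity $m'=\sum_{j\geq 0}\tfrac{1}{(2j+1)!}X_{2j+1}=\sum_{j\geq 0}\tfrac{1}{j!}X_j$ in $\fGC^{\lin\dotte}_1$, where $X_m$ is the two-vertex graph with $m$ parallel uniformly oriented edges: the even graphs $X_{2j}$ vanish by the symmetry $X_m=(-1)^{m+1}X_m$ (you cite this symmetry but do not use its consequence). Since the pre-Lie product is well defined on the coinvariants, inserting $X_{2j}$ into any graph, or inserting any graph into $X_{2j}$, gives zero; hence both alleged discrepancies vanish identically, both halves of the bracket already produce full exponential bundles, and $\co{m'}{g(\Gamma)}=g(d_{\text{out}}\Gamma)-g(d_{\text{in}}\Gamma)+g(d_1\Gamma)$ follows directly with no $\sinh/\cosh$ bookkeeping. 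With that substitution your argument closes; as written, the reconciliation step is wrong.
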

\begin{proof}
We can write $\tilde\delta g(\Gamma)+\co{\tilde m}{g(\Gamma)}=\delta g(\Gamma)+\delta_1 g(\Gamma)+\co{m}{g(\Gamma)}+\co{p}{g(\Gamma)}$. The part $\delta_1+\co{p}{\cdot}$ does not change the number of vertices, and acts on every pair of them separately. It is easy to see that it sends the image of $g$ to $0$. Therefore it is enough to prove
$$
g(d\Gamma)=\delta g(\Gamma)+\co{m}{g(\Gamma)}=\co{m'}{g(\Gamma)}.
$$
Note that
$$
m'=\sum_{j\geq 0}\frac{1}{(2j+1)!} \;
\begin{tikzpicture}[baseline=-.65ex,scale=.8]
 \node[int] (v) at (0,.5) {};
 \node[int] (w) at (0,-0.5) {};
 \draw (v) edge[very thick] node[right] {$\scriptstyle 2j+1$} (w);
\end{tikzpicture}=\sum_{j\geq 0}\frac{1}{j!} \;
\begin{tikzpicture}[baseline=-.65ex,scale=.8]
 \node[int] (v) at (0,.5) {};
 \node[int] (w) at (0,-0.5) {};
 \draw (v) edge[very thick] node[right] {$\scriptstyle j$} (w);
\end{tikzpicture}
$$
because the additional terms are $0$ by symmetry reasons. Therefore
\begin{align*}
\co{m'}{g(\Gamma)} &= m'\bullet g(\Gamma)-(-1)^{|\Gamma|}g(\Gamma)\bullet m'=\\
&= \sum_{j\geq 0}\frac{1}{j!}\;
\begin{tikzpicture}[baseline=-.65ex]
 \node[int] (v) at (-.4,0) {};
 \node[ext] (w) at (.4,0) {$g\Gamma$};
 \draw (v) edge[very thick] node[above] {$\scriptstyle j$} (w);
\end{tikzpicture}
- \sum_{j\geq 0}\frac{1}{j!}\;
\begin{tikzpicture}[baseline=-.65ex]
 \node[int] (v) at (.4,0) {};
 \node[ext] (w) at (-.4,0) {$g\Gamma$};
 \draw (v) edge[very thick] node[above] {$\scriptstyle j$} (w);
\end{tikzpicture}
-(-1)^{|\Gamma|} \sum_{x\in V(\Gamma)}\frac{1}{j!}\;
\begin{tikzpicture}[baseline=-.65ex]
 \node[ext] (v) at (.4,0) {
 \begin{tikzpicture}[baseline=-.65ex]
 \node[int] (v) at (0,.2) {};
 \node[int] (w) at (0,-0.2) {};
 \draw (v) edge[very thick] node[right] {$\scriptstyle j$} (w);
\end{tikzpicture} 
 };
 \node at (.5,-.5) {$\scriptstyle x$};
 \node (w) at (-.5,0) {$g\Gamma$};
 \draw (v) edge[double] (w);
\end{tikzpicture}=\\
&= g\left(
\begin{tikzpicture}[baseline=-.65ex]
 \node[int] (v) at (-.3,0) {};
 \node[ext] (w) at (.3,0) {$\Gamma$};
 \draw (v) edge[snakeit,->] (w);
\end{tikzpicture}\right)
- g\left(
\begin{tikzpicture}[baseline=-.65ex]
 \node[int] (v) at (.3,0) {};
 \node[ext] (w) at (-.3,0) {$\Gamma$};
 \draw (w) edge[snakeit,->] (v);
\end{tikzpicture}\right)
-(-1)^{|\Gamma|} g\left(\sum_{x\in V(\Gamma)}
\begin{tikzpicture}[baseline=-.65ex]
 \node[ext] (v) at (.4,0) {
 \begin{tikzpicture}[baseline=-.65ex]
 \node[int] (v) at (0,.2) {};
 \node[int] (w) at (0,-0.2) {};
 \draw (w) edge[snakeit,->] (v);
\end{tikzpicture} 
 };
 \node at (.4,-.5) {$\scriptstyle x$};
 \node (w) at (-.4,0) {$\Gamma$};
 \draw (v) edge[double] (w);
\end{tikzpicture}\right)=\\
&= g\left(d_\text{out}\Gamma\right)
- g\left(d_\text{in}\Gamma\right)
+g\left(\sum_{x\in V(\Gamma)}
d_x(\Gamma)\right)=g(d\Gamma).
\end{align*}
\end{proof}

\begin{prop}
\label{prop:qi_waved-dotted}
The map $g\colon(\fGC^\snak_{\text{full}},d)\rightarrow(\fGC^{\lin\dotte}_1,\tilde\delta+\co{\tilde m}{\cdot})$ is a quasi-isomorphism.
\end{prop}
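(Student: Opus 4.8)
The plan is to compare the two complexes through the spectral sequences associated to the filtration by the number of vertices. Concretely, I would filter both $\fGC^\snak_{\text{full}}$ and $\fGC^{\lin\dotte}_1$ by letting $\mF^p$ be spanned by graphs with at least $p$ vertices. Since each space is a product over the number of vertices, this filtration is complete and exhaustive, so the associated spectral sequences converge (Proposition \ref{prop:App1}), and $g$ respects the filtration because it preserves the vertex set. The differential $d=d_1-d_\text{in}+d_\text{out}$ on the source \emph{strictly} raises the number of vertices, so the induced differential on $E_0$ vanishes and $E_1(\fGC^\snak_{\text{full}})=\fGC^\snak_{\text{full}}$. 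On the target, the twisted differential splits as $\tilde\delta+\co{\tilde m}{\cdot}=\bigl(\delta_1+\co{p}{\cdot}\bigr)+\co{m'}{\cdot}$, where the first bracketed part preserves the vertex number and $\co{m'}{\cdot}$ raises it by one; hence the $E_0$-differential on the target is exactly $\delta_1+\co{p}{\cdot}$, and $E_1(\fGC^{\lin\dotte}_1)=H(\fGC^{\lin\dotte}_1,\delta_1+\co{p}{\cdot})$.

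The heart of the argument is therefore to show that $g$ induces an isomorphism $\fGC^\snak_{\text{full}}\xrightarrow{\ \sim\ }H(\fGC^{\lin\dotte}_1,\delta_1+\co{p}{\cdot})$. Since taking $S_N$-coinvariants commutes with cohomology in characteristic zero, I would work with numbered vertices. For a fixed vertex set the operator $\delta_1+\co{p}{\cdot}$ is \emph{local}: it is a sum over the unordered pairs $\{x,y\}$ of operators $D_{xy}$ acting only on the edge data between $x$ and $y$. Thus the $N$-vertex complex is a (finite) tensor product over pairs, and by the K\"unneth formula its cohomology is the tensor product of the local cohomologies $H(W_{xy},D_{xy})$, where $W_{xy}$ records the configurations of solid and (at most one) dotted edges on the pair.

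I would then compute $H(W_{xy},D_{xy})$ by hand. Writing $X_k$ for the $k$-fold solid edge (degree $0$, one-dimensional for each $k$ once the signs of edge reversal and permutation are taken into account) and $Z_k$ for the same decorated by an additional dotted edge (degree $1$), the differential takes the form $D_{xy}X_k=Z_k-k(k-1)Z_{k-2}$ and $D_{xy}Z_k=0$. A direct calculation with these recursions shows that $D_{xy}$ is surjective onto the $Z$-space, so $H^1(W_{xy},D_{xy})=0$, while its kernel is two-dimensional, spanned by $\sum_{l}\frac{1}{(2l)!}X_{2l}$ and $\sum_{l}\frac{1}{(2l+1)!}X_{2l+1}$ (here the product structure of the complex in the number of edges is essential, as these are infinite series). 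On the other hand $g$ sends the two oriented edges of the waved complex between $x$ and $y$ precisely to $\sum_{j}\frac{1}{j!}X_j$ and $\sum_{j}\frac{(-1)^j}{j!}X_j$, which form a basis of this kernel; this is consistent with the fact, already established, that $\delta_1+\co{p}{\cdot}$ annihilates the image of $g$. Hence $g$ is a local isomorphism onto $H^0(W_{xy})$, and assembling over pairs and passing to $S_N$-coinvariants gives the desired isomorphism on $E_1$.

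Finally, a map of spectral sequences that is an isomorphism on $E_1$ is an isomorphism on every later page, and by the convergence established above (Proposition \ref{prop:App1}) it follows that $g$ is a quasi-isomorphism. The main obstacle I anticipate is the local computation of $H(W_{xy},D_{xy})$: one must get the combinatorial signs in $D_{xy}$ right and, above all, treat the completion carefully, since both the surjectivity of $D_{xy}$ and the precise dimension of its kernel rely on allowing infinite series in the edge multiplicity. A secondary point requiring care is the verification that the vertex-number filtration is complete and exhaustive so that the convergence result of the appendix genuinely applies, the graded pieces being infinite-dimensional in each degree.
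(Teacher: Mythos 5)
Your proposal follows essentially the same route as the paper: filter both complexes by the number of vertices, identify the induced differential on the associated graded of the target as $\delta_1+\co{p}{\cdot}$, decompose the fixed-vertex-set complex as a tensor product over pairs of vertices, and compute the two-dimensional local cohomology (spanned by the even and odd series $\sum_j \frac{1}{j!}X_j$, i.e., the images under $g$ of the two edge orientations), concluding via convergence of the spectral sequences. The only point where the paper is slightly more explicit is the convergence step you flag at the end: Proposition \ref{prop:App1} applies because for fixed cohomological degree the number of vertices (each of degree one) is bounded, so only finitely many filtration levels contribute in each degree.
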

\begin{proof}
On both complexes we set up the spectral sequence such that the first differential does not change the number of vertices. On the first page we have $(\fGC^\snak_{\text{full}},0)$ and $(\fGC^{\lin\dotte}_1,\delta_1+\co{p}{\cdot})$. The spectral sequence of the first complex clearly converges to its cohomology, and so does the second by virtue of Proposition \ref{prop:App1}, because for fixed cohomological degree (number of vertices plus number of dotted edges) the number of vertices is bounded.

Similar to the proof of Proposition \ref{prop:evenD} we can distinguish vertices for every particular number of them. The differential $\delta_1+\co{p}{\cdot}$ acts on every pair of vertices $(x,y)$ separately, so the complexes with the chosen number of vertices on the second page are tensor products of complexes for every pair $(x,y)$.

An easy investigation of the action of $\delta_1+\co{p}{\cdot}$ on the pair $(x,y)$ leads to the $2$ dimensional homology generated by $\displaystyle
\sum_{j\text{ even}} \frac{1}{j!} \;
\begin{tikzpicture}[baseline=-.65ex]
 \node[int] (v) at (-.3,0) {};
 \node[below] at (v) {$\scriptstyle x$};
 \node[int] (w) at (.3,0) {};
 \node[below] at (w) {$\scriptstyle y$};
 \draw (v) edge[very thick] node[above] {$\scriptstyle j$} (w);
\end{tikzpicture}
$ and $\displaystyle
\sum_{j\text{ odd}} \frac{1}{j!} \;
\begin{tikzpicture}[baseline=-.65ex]
 \node[int] (v) at (-.3,0) {};
 \node[below] at (v) {$\scriptstyle x$};
 \node[int] (w) at (.3,0) {};
 \node[below] at (w) {$\scriptstyle y$};
 \draw (v) edge[very thick] node[above] {$\scriptstyle j$} (w);
\end{tikzpicture}
$. A different basis is $\left\{g\left(
\begin{tikzpicture}[baseline=-.65ex,scale=.5]
 \node[int] (a) at (0,0) {};
 \node[below] at (a) {$\scriptstyle x$};
 \node[int] (c) at (1,0) {};
 \node[below] at (c) {$\scriptstyle y$};
 \draw (a) edge[snakeit,->] (c);
\end{tikzpicture}
\right), g\left(
\begin{tikzpicture}[baseline=-.65ex,scale=.5]
 \node[int] (a) at (0,0) {};
 \node[below] at (a) {$\scriptstyle x$};
 \node[int] (c) at (1,0) {};
 \node[below] at (c) {$\scriptstyle y$};
 \draw (c) edge[snakeit,->] (a);
\end{tikzpicture}
\right)\right\}$. Now it is easy to see that $g$ on the second page is actually an isomorphism.
\end{proof}

\subsection{End of proof}\label{sec:thm2proof}
Now we can obtain the cohomology of the complex $\fGC_1$ with respect to the differential $\delta + \co{m}{\cdot}$.

\begin{proof}[Proof of Theorem \ref{thm:2}]
Propositions \ref{prop:waved_acyclic} and \ref{prop:qi_waved-dotted} imply that $(\fGC^{\lin\dotte}_1,\tilde\delta+\co{\tilde m}{\cdot})$ is acyclic.

We claim that the connected part $(\fGCc^{\lin\dotte}_1,\tilde\delta+\co{\tilde m}{\cdot})$ is also acyclic. Suppose the opposite, and let the first class in cohomology appear in degree $t$, i.e., the combined number of vertices and dotted edges is $t$. Now $t$ can not be $1$, so $t\geq 2$. We set up a filtration on $(\fGC^{\lin\dotte}_1,\tilde\delta+\co{\tilde m}{\cdot})$ according to the number of connected components
 and a spectral sequence such that the first differential does not connect them.
 Note that this filtration is automatically bounded: the number of connected components is certainly bounded below, and it is bounded above since for a fixed cohomological degree one can have only a finite number of connected components owed to the fact that vertices carry degree one and edges carry non-negative degrees. Hence the spectral sequence in particular converges to the cohomology, i.e., to 0.
 
 Since the differential on the associated graded respects the numbers of connected components by construction, we will find the symmetric product of $H(\fGCc^{\lin\dotte}_1,\tilde\delta+\co{\tilde m}{\cdot})$ on the $E^1$-page of the spectral sequence. The subsequent differentials in the spectral sequence will (i) reduce the numbers of connected components and (ii) increase the degree by 1. But by our (contrapositive) assumption we have a lowest degree connected cohomology class in degree $t\geq 2$. 
 But then the higher symmetric powers $S^k(H(\fGCc^{\lin\dotte}_1,\tilde\delta+\co{\tilde m}{\cdot}))$ will be concentrated 
 in degrees $kt\geq 2t$. Hence our degree $t$ connected class cannot be cancelled on further pages of the spectral sequence, contradicting the acyclicity of $(\fGC^{\lin\dotte}_1,\tilde\delta+\co{\tilde m}{\cdot})$.
 Hence we conclude that $H(\fGCc^{\lin\dotte}_1,\tilde\delta+\co{\tilde m}{\cdot})=0$.
 
Adding $p$ to $(\fGCc^{\lin\dotte}_1,\tilde\delta+\co{\tilde m}{\cdot})$ (see \eqref{equ:pdef}) creates one class in cohomology. In fact, an explicit generating cocycle may be constructed as follows. In general, if $\tilde m$ is a Maurer-Cartan element in a differential graded Lie algebra $\mathfrak{g}$ and if $D$ is some derivation of $\mathfrak{g}$, then $D\tilde m$ is a cocycle in the twisted dg Lie algebra $\mathfrak g^{\tilde m}$. Furthermore, if $\mathfrak g$ carries an additional grading, then the grading generator (multiplying a homogeneous element of degree $\alpha$ by $\alpha$) is a derivation.
In our case, the dg Lie algebra $(\fGCc^{\lin\dotte,p}_1,\tilde\delta)$ carries a grading by the loop order, plus the number of dotted edges. Hence, applying the grading generator to the Maurer-Cartan element $\tilde m$ (cf. \eqref{equ:tildemdef}) yields the desired cocycle in $(\fGCc^{\lin\dotte,p}_1,\tilde\delta+\co{\tilde m}{\cdot})$
\[
\tilde c :=
\sum_{j\geq 1}
\frac{2j}{(2j+1)!} \;
\begin{tikzpicture}[baseline=-.65ex,scale=.8]
 \node[int] (v) at (0,.5) {};
 \node[int] (w) at (0,-0.5) {};
 \draw (v) edge[very thick] node[right] {$\scriptstyle 2j+1$} (w);
\end{tikzpicture}
+
2
\begin{tikzpicture}[every loop/.style={}]
 \node[int] (a) at (0,0) {};
 \draw (a) edge[dotted,loop] (a);
\end{tikzpicture}.
\]
Corollary \ref{cor:qi_dotted-extra} now implies that $H(\fGCc_1,\delta + \co{m}{\cdot})$ is one dimensional with the class being represented by 
\[
f(\tilde c)=
\sum_{j> 0}
\frac{2j}{(2j+1)!} \;
\begin{tikzpicture}[baseline=-.65ex,scale=.8]
 \node[int] (v) at (0,.5) {};
 \node[int] (w) at (0,-0.5) {};
 \draw (v) edge[very thick] node[right] {$\scriptstyle 2j+1$} (w);
\end{tikzpicture} =2c.
\]
\end{proof}

\begin{rem}
The proof of Theorem \ref{thm:2} we gave above is purely combinatorial. 
More conceptually, the result may be related to the rigidity of the associative operad, together with the presence of the Hodge filtration on this operad, as we explain in Appendix \ref{app:altproof}. 
\end{rem}

\subsection{A picture of the odd graph cohomology}
Similarly as in odd case, there is a Corollary.

\begin{cor}\label{cor:odd}
There is a spectral sequence converging to
\[
 H(\fGCc_1,\delta + \co{m}{\cdot})=\K c
\]
whose $E^1$ term is 
\[
 H(\fGCc_1,\delta).
\]
Furthermore, all differentials on odd pages are $0$.
\end{cor}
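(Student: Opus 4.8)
The plan is to mirror Corollary~\ref{cor:even} from the even case. Writing the differential as $\delta + \co{m}{\cdot} = \co{m'}{\cdot}$, I would filter $(\fGCc_1, \co{m'}{\cdot})$ descendingly by the Betti number $b = e - v$, with $\mathcal{G}^p$ spanned by graphs with $e - v \geq p - 1$, exactly as in the even case. The piece $\delta$, i.e.\ the bracket with the single solid edge (the $j=0$ term of $m'$), preserves $b$, whereas the bracket with the $(2j+1)$-fold edge for each $j \geq 1$ strictly raises $b$. Hence the differential on the associated graded $E^0$ page is precisely $\delta$, and the first page is $E^1 = H(\fGCc_1, \delta)$, as asserted.

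The convergence to $H(\fGCc_1, \delta + \co{m}{\cdot}) = \K c$ (the latter equality being Theorem~\ref{thm:2}) is where a genuinely new point arises compared to the even case. The filtration is bounded below, since connectedness forces $e \geq v - 1$ and hence $b \geq -1$. It is however \emph{not} bounded above in a fixed cohomological degree: fixing the degree fixes the number of vertices $v$ (vertices carry degree $1$, edges degree $0$), but the number of edges, and with it $b$, is unbounded because multiple edges are allowed. The remedy is that in each degree $\fGCc_1$ is a \emph{product} over the number of edges and is therefore complete with respect to the filtration; completeness together with boundedness below and exhaustiveness is what lets one invoke the Appendix convergence results (Propositions~\ref{prop:App1} and~\ref{prop:App2}). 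I expect this completeness-based convergence argument, which replaces the plain boundedness available in Corollary~\ref{cor:even}, to be the main technical obstacle.

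Finally, the vanishing of the odd-page differentials follows from a parity constraint. Bracketing with the $(2j+1)$-fold edge inserts a two-vertex, $(2j+1)$-edge graph at a vertex, producing one extra vertex and $2j+1$ extra edges, and therefore changes $b = e - v$ by exactly $2j$. Thus every component of the total differential $\co{m'}{\cdot}$ shifts $b$ by an even amount, namely $0$ for $\delta$ and $2j$ for the $j$-th term. Since the page-$r$ differential $d_r$ shifts the filtration degree by exactly $r$, and no component of the differential shifts $b$ by an odd amount, all $d_r$ with $r$ odd vanish, which is the last claim of the Corollary.
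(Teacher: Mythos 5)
Your proposal is correct and follows essentially the same route as the paper: filtration by the Betti number $b=e-v$, identification of the $E^1$ page with $H(\fGCc_1,\delta)$ since only the $j=0$ term of $m'$ preserves $b$, convergence via Proposition~\ref{prop:App2} (the relevant hypothesis being that fixing the cohomological degree, i.e.\ the number of vertices, together with $b$ fixes the number of edges and hence gives finite-dimensional bigraded pieces), and vanishing of odd-page differentials because every component of $\co{m'}{\cdot}$ shifts $b$ by an even amount, which the paper phrases equivalently as the splitting of the complex into even-$b$ and odd-$b$ summands.
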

\begin{proof}
We set the spectral sequence on $(\fGCc_1,\delta + \co{m}{\cdot})$ such that the first differential does not change $b=e-v$. Clearly, on the $E^1$ page we have $H(\fGCc_1,\delta)$.

The complex $(\fGCc_1,\delta + \co{m}{\cdot})$ is in fact the direct sum of two complexes, namely its subcomplexes corresponding to even and odd $b$. Therefore, there can not be a non-zero differential changing the parity of $b$ on any page of spectral sequence, and hence all differentials on odd pages are $0$.

The subspace of $\fGCc_1$ corresponding to a fixed number of vertices and a fixed $b$ is finite-dimensional, so Proposition \ref{prop:App2} implies that spectral sequence converges to the cohomology of the complex.
\end{proof}

Table \ref{tbl:oddcanceling} represents the second page of the spectral sequence from Corollary \ref{cor:odd}, i.e.\ $H(\fGC_1,\delta)$, where the column number represents the number of vertices $v$ and the row number represents the Betti number minus one $b=e-v$. The numbers in the table represent the dimension of the respective subspace of $H(\fGC_1,\delta)$. They were partially taken from \cite{barnatanmk}, and partially calculated by a computer program of the second author.

\begin{table}
\begin{tikzpicture}
\matrix (mag) [matrix of nodes,ampersand replacement=\&]
{
{}\& 1 \& 2 \& 3 \& 4 \& 5 \& 6 \& 7 \& 8 \& 9 \&10 \&11 \&12 \&13 \&14 \&15 \&16 \&17 \&18 \&19 \&20 \&21 \&22 \\
-1\& 0 \& 0 \& 0 \& 0 \& 0 \& 0 \& 0 \& 0 \& 0 \& 0 \& 0 \& 0 \& 0 \& 0 \& 0 \& 0 \& 0 \& 0 \& 0 \& 0 \& 0 \& 0 \\
0 \& 0 \& 0 \& 1 \& 0 \& 0 \& 0 \& 1 \& 0 \& 0 \& 0 \& 1 \& 0 \& 0 \& 0 \& 1 \& 0 \& 0 \& 0 \& 1 \& 0 \& 0 \& 0 \\
1 \& 0 \& 1 \& 0 \& 0 \& 0 \& 0 \& 0 \& 0 \& 0 \& 0 \& 0 \& 0 \& 0 \& 0 \& 0 \& 0 \& 0 \& 0 \& 0 \& 0 \& 0 \& 0 \\
2 \& 0 \& 0 \& 0 \& 1 \& 0 \& 0 \& 0 \& 0 \& 0 \& 0 \& 0 \& 0 \& 0 \& 0 \& 0 \& 0 \& 0 \& 0 \& 0 \& 0 \& 0 \& 0 \\
3 \& 0 \& 0 \& 0 \& 0 \& 0 \& 1 \& 0 \& 0 \& 0 \& 0 \& 0 \& 0 \& 0 \& 0 \& 0 \& 0 \& 0 \& 0 \& 0 \& 0 \& 0 \& 0 \\
4 \& 0 \& 0 \& 0 \& 0 \& 0 \& 0 \& 0 \& 2 \& 0 \& 0 \& 0 \& 0 \& 0 \& 0 \& 0 \& 0 \& 0 \& 0 \& 0 \& 0 \& 0 \& 0 \\
5 \& 0 \& 0 \& 0 \& 0 \& 0 \& 0 \& 1 \& 0 \& 0 \& 2 \& 0 \& 0 \& 0 \& 0 \& 0 \& 0 \& 0 \& 0 \& 0 \& 0 \& 0 \& 0 \\
6 \& 0 \& 0 \& 0 \& 0 \& 0 \& 0 \& 0 \& 0 \& 1 \& 0 \& 0 \& 3 \& 0 \& 0 \& 0 \& 0 \& 0 \& 0 \& 0 \& 0 \& 0 \& 0 \\
7 \& 0 \& 0 \& 0 \& 0 \& 0 \& 0 \& 0 \& 0 \& 0 \& 0 \& 2 \& 0 \& 0 \& 4 \& 0 \& 0 \& 0 \& 0 \& 0 \& 0 \& 0 \& 0 \\
8 \& 0 \& 0 \& 0 \& 0 \& 0 \& 0 \& 0 \& 0 \& 0 \& ? \& 0 \& ? \& ? \& ? \& ? \& 5 \& 0 \& 0 \& 0 \& 0 \& 0 \& 0 \\
9 \& 0 \& 0 \& 0 \& 0 \& 0 \& 0 \& 0 \& 0 \& 0 \& 0 \& ? \& ? \& ? \& ? \& ? \& ? \& ? \& 6 \& 0 \& 0 \& 0 \& 0 \\
10\& 0 \& 0 \& 0 \& 0 \& 0 \& 0 \& 0 \& 0 \& 0 \& 0 \& 0 \& ? \& ? \& ? \& ? \& ? \& ? \& ? \& ? \& 8 \& 0 \& 0 \\
11\& 0 \& 0 \& 0 \& 0 \& 0 \& 0 \& 0 \& 0 \& 0 \& 0 \& 0 \& 0 \& ? \& ? \& ? \& ? \& ? \& ? \& ? \& ? \& ? \& 9 \\
};
\draw (mag-2-1.north west) -- (mag-1-23.south east);
\draw (mag-1-2.north west) -- (mag-14-1.south east);
\draw[-latex, thick] (mag-3-4) edge (mag-5-5);
\draw[-latex, thick] (mag-3-8) edge (mag-7-9);
\draw[-latex, thick] (mag-3-12) edge node[above]{$?$} (mag-9-13);
\draw[-latex, thick] (mag-3-16) edge node[above]{$?$} (mag-11-17);
\draw[-latex, thick] (mag-3-20) edge node[above]{$?$} (mag-13-21);
\draw[-latex, thick] (mag-6-7) edge (mag-8-8);
\draw[-latex, thick] (mag-7-9) edge (mag-9-10);
\draw[-latex, thick] (mag-8-11) edge node[above]{$?$} (mag-10-12);
\end{tikzpicture}
\caption{\label{tbl:oddcanceling} The table of cohomology of $\fGCc_1$. The arrows indicate some cancellations in the spectral sequence.}
\end{table}

The classes for $b=0$ are represented by the loop graphs $L_k$ (see Figure \ref{fig:loops}). Only the classes $L_{4j+3}$ are non-zero in $\fGCc_1$ by symmetry reasons.
Corollary \ref{cor:odd} implies that the classes in the graph cohomology occur in pairs, with one partner canceling the other on some page of the above spectral sequence. Since canceling can not occur on odd pages, all partners have the same parity of $b$. Some cancellations are represented by arrows in the table above. The arrows with a question mark are conjectural, since the computed data is not sufficient to rule out cancellations with classes in the unknown region. 

As a further application of Theorem \ref{thm:2}, we obtain a lower bound for the cohomology of the graph complex $\GC_3$ in degrees $\leq -3$. To this end we will denote by $H^{j}(\GC_3)_n$ the subspace of $H^{j}(\GC_3)$ of classes representable by graphs with fixed number of vertices $n$.

\begin{cor}\label{cor:odddimcor}
Let $h_n=\vdim H^{-3}(\GC_3)_n$. Then there are the following inequalities:
 \[
  \sum_{j\geq 1} \vdim H^{-4-j}(\GC_3)_{n+1} \geq h_n + \epsilon_n 
 \]
 where
 \[
  \epsilon_n=
  \begin{cases}
   1 & \text{if $n\equiv 0$ mod $4$ } \\
   0 & \text{otherwise}
  \end{cases}
 \]
\end{cor}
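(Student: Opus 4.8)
The plan is to read the inequality off the spectral sequence of Corollary \ref{cor:odd}, after translating everything into the bigrading appropriate to $\GC_3$. Recall from Theorem \ref{thm:2} and Corollary \ref{cor:odd} that the spectral sequence on $(\fGCc_1,\delta+\co{m}{\cdot})$ has first page $E^1=H(\fGCc_1,\delta)$ and converges to the one-dimensional space $\K c$; moreover $H(\fGCc_1,\delta)$ splits as $H(\GC_1)$ together with the loop classes $L_{4i+3}$ (the only non-vanishing loops). Under the loop-order degree shift a graph with $v$ vertices and Betti number $b=e-v$ represents a $\GC_3$-class of cohomological degree $v-2b-3$ and loop order $b+1$, and the loop class $L_k$ sits in $\GC_3$-degree $k-3$. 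First I would record the numerics of the differentials: each $d_r$ raises $v$ by $1$ and $b$ by $r$ (and vanishes for odd $r$, by the even/odd splitting used in Corollary \ref{cor:odd}), so it changes the $\GC_3$-degree by $1-2r$. In particular the page on which any two classes can be paired is pinned down by their bidegrees alone.

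Next I would isolate the dominant diagonal. Since $H^{\ge -2}(\GC_3)=0$, the space $H^{-3}(\GC_3)$ occupies the maximal cohomological degree, and its piece with $n$ vertices sits at $(v,b)=(n,n/2)$. Fix such a class $X$. Because the spectral sequence abuts to the one-dimensional $\K c$, every dominant class other than this single survivor must be cancelled on some finite page. A differential emanating from $X$ lands at $(n+1,\,n/2+r)$, that is in $H^{-2-2r}(\GC_3)_{n+1}\subset\bigoplus_{j\ge 1}H^{-4-j}(\GC_3)_{n+1}$; conversely, a differential hitting $X$ must originate from a class of strictly larger $\GC_3$-degree, and by the degree bound the only classes above degree $-3$ are the loop classes, of degree $4i\ge 0$. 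A short computation with the numerics above shows that the unique loop class that can map into the column $v=n$ is $L_{n-1}$, which is non-zero precisely when $n\equiv 0 \bmod 4$, i.e.\ precisely when $\epsilon_n=1$. Thus each dominant class either maps forward, contributing a partner in $\bigoplus_{j\ge1}H^{-4-j}(\GC_3)_{n+1}$, or is tied to the single loop class $L_{n-1}$.

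The inequality would then follow from a dimension count on the page where the dominant classes die, using that the forward-mapping classes inject into $\bigoplus_{j\ge1}H^{-4-j}(\GC_3)_{n+1}$ and that convergence to $\K c$ (via the convergence statement Proposition \ref{prop:App2}) makes the bookkeeping exact, so that the resulting one-sided bound is safe even in the partially computed range. I expect the main obstacle to be exactly the treatment of the extremal loop class $L_{n-1}$: one must show that the dominant classes genuinely survive up to the page dictated by the numerics (ruling out spurious earlier cancellations among the below-dominant classes), and then account, with correct signs and multiplicities, for how the presence of $L_{n-1}$ when $n\equiv 0\bmod 4$ interacts with the dominant diagonal. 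It is this interaction, together with the single surviving class $c$, that is responsible for the correction term $\epsilon_n$ rather than a bare $h_n$; making that accounting precise is the crux of the argument.
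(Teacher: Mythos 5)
The paper states this corollary without proof, so there is no argument to compare against line by line; your reconstruction of the intended mechanism --- the spectral sequence of Corollary \ref{cor:odd}, the dictionary $(v,b)\mapsto$ $\GC_3$-degree $v-2b-3$, the fact that $d_r$ shifts $(v,b)$ by $(1,r)$ and hence the degree by $1-2r$ with $r$ even, and the identification of $L_{n-1}$ (non-zero precisely when $n\equiv 0\bmod 4$) as the only class that can hit the dominant spot $(n,n/2)$ from above --- is exactly the right setup. The bookkeeping you defer is also routine: on each even page $r$ the rank of $d_r$ out of $(n,n/2)$ is bounded by $\vdim H^{-2-2r}(\GC_3)_{n+1}$, and the sum of these outgoing ranks plus the (at most $\epsilon_n$-dimensional) incoming rank from $L_{n-1}$ must exhaust $h_n$, since the abutment $\K c$ lives at $v=2$.

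Carried to completion, however, this argument yields $\sum_{j\ge 1}\vdim H^{-4-j}(\GC_3)_{n+1}\ \ge\ h_n-\epsilon_n$, not $h_n+\epsilon_n$: each dominant class either supports a differential, contributing to the left-hand side, or is killed by $L_{n-1}$, and the second option can only \emph{reduce} the number of partners required at $n+1$ vertices. You sense this tension and defer the $+\epsilon_n$ to ``the crux of the argument'', but no accounting can produce it, because the inequality as printed is contradicted by the paper's own data. For $n=4$ one has $h_4=1$ and $\epsilon_4=1$, yet $H^{\le -5}(\GC_3)_5=0$ (column $v=5$ of Table \ref{tbl:oddcanceling} is empty), consistent with the paper's own remark that $L_3$ kills the tetrahedron class so that it needs no partner with five vertices; likewise for $n=8$ the left-hand side equals $1$ while $h_8+\epsilon_8=3$. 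So the sign in front of $\epsilon_n$ must be negative (and small $n$ such as $n=2$, where the surviving class $c$ absorbs the theta graph, must be excluded). Your proposal, with the rank count made explicit, proves that corrected statement; the step you flag as the remaining obstacle is not a missing step but an impossibility.
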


The non-loop cohomology classes found so far computationally live in cohomological degrees $-3$ and $-6$.
Currently, we do not know which classes in $\GC_3$ are the partners of the loop graphs $L_k$. However, we raise the following conjecture.
\begin{conjecture}\label{conj:wheelkills}
The classes killed by the loop graphs $L_k$ are linear combinations of trivalent graphs.
In other words, the corresponding classes live in degree $-3$ in $\GC_3$.
\end{conjecture}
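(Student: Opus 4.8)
\emph{Proof proposal.}
The strategy is to adapt the mechanism of Proposition~\ref{prop:wheelkills} to the odd case, working with the spectral sequence of Corollary~\ref{cor:odd} on $(\fGCc_1,\delta+\co{m}{\cdot})$ filtered by $b=e-v$, whose first page is $H(\fGCc_1,\delta)$ and which converges to the one-dimensional abutment $\K c$ (Theorem~\ref{thm:2}). Since the odd-page differentials vanish, the only differentials are $d_p$ with $p$ even, and each raises the vertex number $v$ by one while raising $b$ by $p$, so $d_p$ sends the bidegree $(v,b)$ to $(v+1,b+p)$. The loop class $L_{4j+3}$, nonzero on the first page, sits at $(v,b)=(4j+3,0)$, while the unique surviving class $c$ is supported on two-vertex graphs; as the abutment is one-dimensional, $L_{4j+3}$ cannot survive to $E^\infty$ and must be cancelled. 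First I would observe that it can only be a source: were it a target of some $d_p$, the source would have $b=-p\leq-2$, impossible for a connected graph. So $L_{4j+3}$ kills a partner of one more vertex and strictly larger Betti number.

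Next comes the degree bookkeeping, which I would carry out in the grading of $\GC_1$ (where the degree of a graph is $v-1$ and the differential is homogeneous of degree one) and then translate to $\GC_3$ via the loop-order-dependent shift $\deg_{\GC_3}=\deg_{\GC_1}-2(b+1)$. A partner cancelled by $L_{4j+3}$ on page $p$ lies at $(v,b)=(4j+4,p)$, hence in $\GC_3$-degree $4j+1-2p$. Because loop classes occur only at $b=0$, this partner (having $b=p\geq2$) belongs to the honest cohomology $H(\GC_1,\delta)$, so the vanishing $H^{\geq-2}(\GC_3)=0$ forces it to be zero whenever $4j+1-2p\geq-2$, i.e.\ for every even $p\leq2j$. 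Consequently $L_{4j+3}$ survives at least to page $2j+2$. On page exactly $p=2j+2$ the partner has $\GC_3$-degree $-3$, the top degree, where $2e=3v$ forces every vertex to be trivalent; this is precisely the assertion of the conjecture. The only alternative, which the conjecture claims never happens, is that $L_{4j+3}$ survives to a later even page $2j+4,2j+6,\dots$ and cancels a sub-dominant class of degree $-7,-11,\dots$, necessarily carrying vertices of valence $>3$.

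Thus the whole statement reduces to the single non-vanishing $d_{2j+2}[L_{4j+3}]\neq0$. To establish it I would mimic the subcomplex argument of Proposition~\ref{prop:wheelkills} in the pre-dual complex $\fGCcdu_1$, whose differential contracts edges and whose extra piece is dual to $\co{m}{\cdot}$: one chooses a trivalent cocycle $T$ at bidegree $(4j+4,2j+2)$ and follows its image down the dual spectral sequence, the last step of which lands inside the subcomplex of graphs possessing a bivalent vertex. As in the even case, the relevant moves can only lower valences, so the image is pinned to a subquotient of the cohomology of that bivalent-vertex subcomplex, which at the relevant edge count one hopes is one-dimensional and spanned by $L_{4j+3}$; non-vanishing of the resulting pairing would then force the cancellation onto page $2j+2$.

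The main obstacle is exactly this last step, and it is the reason the statement is only a conjecture. In the even case the subcomplex computation is powered by the explicit fact that every representative of $\sigma_{2j+1}$ contains $\WG_{2j+1}$ with nonzero coefficient, which fixes a canonical partner cocycle. In the odd case no analogous explicit partner is available: the conjectural Vogel--Kneissler generators $t,\omega_0,\omega_1,\dots$ of $H^{-3}(\GC_3)$ are not known to generate, and we have essentially no control over the sub-dominant groups $H^{-7}(\GC_3),H^{-11}(\GC_3),\dots$ that could a priori absorb $L_{4j+3}$ on a later page. A uniform proof therefore seems to require either an independent construction of the trivalent partner cocycle or a vanishing theorem for the sub-dominant cohomology in the relevant bidegrees. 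For $j=1$ the loop graph is $L_7$ and the non-vanishing $d_4[L_7]\neq0$, with image in degree $-3$, is instead verified by the explicit computation of Appendix~\ref{app:7loop}, confirming the conjecture in that case.
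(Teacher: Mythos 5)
The statement you were asked to prove is Conjecture~\ref{conj:wheelkills}; the paper itself offers no proof, only consistency checks (the observation that $L_3$ kills Vogel's tetrahedron class $t$, and the explicit computation in Appendix~\ref{app:7loop} showing that the partner of $L_7$ is a trivalent class with $v=8$, $b=4$, cancelled on page $4$). Your proposal correctly recognizes this, and the part you do prove is sound. The degree bookkeeping is right: in the spectral sequence of Corollary~\ref{cor:odd}, $L_{4j+3}$ can only be a source (a class hitting it would need $b=-p\leq -2$, impossible for connected graphs); its target on an even page $p$ sits at $(v,b)=(4j+4,p)$, hence in $\GC_3$-degree $4j+1-2p$; since a target with $b\geq 2$ lies in the honest cohomology $H(\GC_3)$ (the low-valence part of $H(\fGCc_1,\delta)$ consists exactly of the loop classes at $b=0$, by the results of \cite{grt}), the vanishing $H^{\geq -2}(\GC_3)=0$ forces $p\geq 2j+2$, and $p=2j+2$ corresponds exactly to degree $-3$, i.e.\ $2e=3v$, i.e.\ trivalence. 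Your reduction --- the conjecture is equivalent to the single non-vanishing $d_{2j+2}[L_{4j+3}]\neq 0$ on the earliest admissible page --- is a genuine sharpening of the statement, and it is consistent with both verified cases ($L_3$ cancelling on page $2$, $L_7$ on page $4$).

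You also correctly identify why the remaining step cannot be settled by imitating Proposition~\ref{prop:wheelkills}: the even-case argument is anchored by the theorem of \cite{grt} that every representative of $\sigma_{2j+1}$ contains the wheel $\WG_{2j+1}$ with non-zero coefficient, which supplies an explicit cocycle whose fate in the (dual) spectral sequence can be tracked into the bivalent-vertex subcomplex; in the odd case no such statement exists --- Vogel's and Kneissler's elements $t,\omega_0,\omega_1,\dots$ are only conjectural generators of $H^{-3}(\GC_3)$, and nothing is known about the sub-dominant groups $H^{-7}(\GC_3), H^{-11}(\GC_3),\dots$ that could a priori absorb $L_{4j+3}$ on a later page. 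So your proposal is as complete as the current state of knowledge allows: a correct equivalent reformulation plus an accurate identification of the open core, which matches the paper's own position of stating the claim as a conjecture supported by the $L_3$ and $L_7$ computations.
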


Note that the classes $t,\omega_0, \omega_1,\dots$ introduced and shown to be non-trivial by P. Vogel \cite{vogel} live in the correct degrees to be the partners of the loops $L_3, L_7,\dots$. Indeed, it is easy to check that $L_3$ kills Vogel's $t$ (the Tetrahedron graph) in the spectral sequence. The partner of $L_7$ is computed in Appendix \ref{app:7loop}, and it differs from Vogel's $\omega_0$.


%
%
%

%
%


\appendix

 \section{Rigidity of the associative operad and Theorem \ref{thm:2}}\label{app:altproof}
 
The purpose of this Appendix is to sketch a different, conceptually more satisfying explanation of Theorem \ref{thm:2}.
We will refrain from giving a second complete proof of this Theorem, but only sketch the main ingredients.
We recall from \cite{grt} that one may express the graph cohomology as (a part of) the cohomology of the operadic deformation complex
\begin{equation}\label{equ:HDefPoiss}
H(\Def(\hoPoiss \to \Poiss))=S^+(\K[1]\oplus H(\fGCc_1)[-1])[2].
\end{equation}
It is well known that the associative operad $\Ass$ carries a filtration, the Hodge filtration, such that the associated graded operad is the Poisson operad. For a more detailed description of this filtration see for example \cite[sections 3.2 and 3.3]{TW}.
Using the Hodge filtration (both on the target and a variant on the source) we may induce a filtration on the deformation complex of the associative operad
\[
\Def(\Ass_\infty\to \Ass),
\]
such that the first convergent in the associated spectral sequence is \eqref{equ:HDefPoiss}. This can be seen as the conceptual origin of the spectral sequence of Theorem \ref{thm:main} in the odd case, provided the following lemma.
 
 \begin{lemma}\label{lem:assrigid}
 $\Def(\Ass_\infty\to \Ass)$ is acyclic. 
 \end{lemma}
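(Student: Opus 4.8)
The plan is to translate the statement into a vanishing result for a convolution complex and then kill that complex with an explicit contracting homotopy of ``extra degeneracy'' type, in exactly the spirit of the homotopies used in Propositions~\ref{prop:evenD} and~\ref{prop:waved_acyclic}. First I would invoke that the associative operad is Koszul, so that a cofibrant model is the cobar construction $\Ass_\infty=\Omega(\mathcal C)$ on its Koszul dual cooperad $\mathcal C$. Under this identification the deformation complex becomes the convolution dg Lie algebra
\[
\Def(\Ass_\infty\to\Ass)\;\cong\;\Bigl(\prod_n \Hom_{S_n}\bigl(\mathcal C(n),\Ass(n)\bigr),\ \partial\Bigr),
\]
where $\partial=\co{\mu}{\cdot}$ is the twist by the Maurer--Cartan element $\mu$ encoding the associative product (the image of the binary generator under $\Ass_\infty\to\Ass$). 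Since neither $\mathcal C$ nor $\Ass$ carries an internal differential and $\mu$ is concentrated in arity $2$, the operator $\partial$ is homogeneous and raises the arity by exactly one. Hence the complex is simply $C_1\to C_2\to C_3\to\cdots$ with each $C_n=\Hom_{S_n}(\mathcal C(n),\Ass(n))$ finite dimensional.

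Next I would construct a degree $-1$ homotopy $h\colon C_n\to C_{n-1}$ and verify $\partial h+h\partial=\Id$. The natural candidate is the operadic analogue of the extra degeneracy that contracts the bar complex of a unital associative algebra: $h$ is built by capping off a distinguished leaf of a cooperad generator with the operadic unit, or equivalently by pairing against the canonical element provided by the self-duality $\Ass^{!}\cong\Ass$ (up to operadic suspension). Because this operator is defined arity by arity, the identity $\partial h+h\partial=\Id$ holds on each homogeneous component and therefore on the whole product $\prod_n C_n$; in particular no completeness argument is needed, though one could equally phrase the conclusion through Proposition~\ref{prop:App2} as in Corollary~\ref{cor:odd}. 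This yields $H(\Def(\Ass_\infty\to\Ass))=0$.

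The hard part will be the sign and equivariance bookkeeping. One must fix compatible orderings and orientations on the generators of $\mathcal C$ and on $\Ass(n)=\K[S_n]$ so that the two contributions to $\partial$ --- inserting the generator on the $\mathcal C$ side and on the $\Ass$ side --- combine with $h$ to give exactly $\Id$ rather than some nonzero scalar multiple, and so that $h$ genuinely preserves the $S_n$-invariants (a symmetrization over the choice of distinguished leaf is presumably required). Pinning down the correct normalization of the ``extra degeneracy'' in the non-unital setting, where there is no honest unit but only the self-duality pairing, is the one genuinely delicate point.

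Conceptually, the cohomology just computed is the operadic Andr\'e--Quillen cohomology $H^\bullet_{\Ass}(\Ass,\Ass)$ of the associative operad with coefficients in itself, whose vanishing is precisely the classical rigidity of $\Ass$; thus the lemma may alternatively be obtained by citing that rigidity, with the homotopy above serving as the self-contained proof. This is consistent with the intended application: feeding the acyclicity into the Hodge spectral sequence, whose first convergent page is the right-hand side of \eqref{equ:HDefPoiss}, namely $S^+(\K[1]\oplus H(\fGCc_1)[-1])[2]$, is exactly the mechanism that pairs up the graph cohomology classes and produces Theorem~\ref{thm:main} in the odd case.
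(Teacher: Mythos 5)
Your setup is right and matches the paper's: after unraveling, $\Def(\Ass_\infty\to\Ass)=\prod_N\Hom_{S_N}(\Ass^\vee(N),\Ass(N))[-1]$ is a Hochschild-type complex whose $N$-th piece is spanned by associative words in $a_1,\dots,a_N$, with an arity-raising differential $d=\sum_{i=0}^{N+1}(-1)^id_i$, so no convergence issues arise. The gap is that the entire content of the lemma sits in the homotopy you never write down. You assert the existence of an ``extra degeneracy'' $h$ with $\partial h+h\partial=\Id$, but the natural candidate in this non-unital setting --- strip the letter $a_1$ from the front of a word if it starts with $a_1$, and send the word to $0$ otherwise, which is what the paper uses --- does \emph{not} satisfy $\partial h+h\partial=\Id$. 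It satisfies $h_{N+1}d+dh_N=1-\pi_N$, where $\pi_N$ projects onto the words beginning with $a_1$; those words form a subcomplex quasi-isomorphic (after stripping $a_1$) to the original complex one arity lower, and the paper then needs an induction on $N$ to conclude. So the two issues you explicitly defer --- there is no operadic unit to ``cap off'' a leaf with, and the normalization of the degeneracy in the non-unital setting --- are not bookkeeping: they are exactly the reason a one-shot contraction is unavailable and an extra inductive step is required. Your suggested fix of symmetrizing over the distinguished leaf is not verified either and would destroy the clean form of the identity rather than upgrade $1-\pi_N$ to $\Id$.

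The remedy is small but essential: replace the claimed global contraction by the weaker identity $h d+dh=1-\pi_N$ for the explicit front-stripping $h$, observe that the image of $\pi_N$ is a subcomplex isomorphic to a shifted copy of the complex in lower arity, and run the induction on $N$. Your final paragraph, which offers to conclude by ``citing the rigidity of $\Ass$,'' should be dropped or at least flagged as circular in this context: the lemma \emph{is} that rigidity statement, and the paper proves it precisely by the combinatorial argument sketched above.
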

 \begin{proof}
  Recall that 
 \[
  \Def(\Ass_\infty\to \Ass) = \prod_{N\geq 0} \Hom_{S_N}(\Ass^\vee(N),\Ass(N))[-1].
 \]
 Elements of the $N$-th factor can be understood as linear combinations of associative words in variables $a_1,a_2,\dots,a_N$, each appearing exactly once.
 The differential is a variant of the Hochschild differential:
 \[
  d = \sum_{i=0}^{N+1} (-1)^i d_i
 \]
 where $d_i$ ($1\leq i\leq N$) replaces $a_i$ by $a_ia_{i+1}$ and renumbers $a_{i+1}\to a_{i+2}$ etc., while $d_0$ ($d_{N+1}$) multiply the whole word from the left (right) by $a_1$ (by $a_{N+1}$) and renumber the other variables accordingly.
 
 To show that the above complex is acyclic we may proceed by induction on $N$. So we suppose that any cocycle of length $< N$ is exact, and want to show that any cocycle of length $N$ is exact.
 Define the homotopy 
 \begin{gather*}
  h_N:\Hom_{S_N}(\Ass^\vee(N),\Ass(N))[-1]\to \Hom_{S_{N-1}}(\Ass^\vee(N-1),\Ass(N-1))[-1] 
 \\
 h(W) := 
 \begin{cases}
  W' & \text{if $W=a_1W'$} \\
 0 & \text{if $W$ does not start with $a_1$}
 \end{cases}.
 \end{gather*}
 
 One checks that $h_{N+1}d+dh_{N}=1-\pi_N$, where $\pi_N$ is the projection onto those words starting with $a_1$.
 These words form a subcomplex quasi-isomorphic to the original complex, and hence by induction one shows the Lemma.
 \end{proof}
 
Let us be more precise and describe the relation to Theorem \ref{thm:2} using the above ingredients.
 For this, we will choose a particular model of the (target) associative operad, carrying an action of the (twisted) graph complex.  
As indicated in Remark \ref{rem:MCMoyal} we have a map 
 \[
  F: \Ass \to \Gra_1
 \]
 defined such 
 \[
  F(m_2) = 
 \sum_{j\geq 0}
 \frac 1 {j!}
 \begin{tikzpicture}[baseline=-.65ex]
  \node[ext] (v) at (0,0) {1};
  \node[ext] (w) at (1,0) {2};
  \node at (0.5,0.1) {$\vdots$};
 \draw (v) edge[bend left=50] (w) edge[bend right=50] (w);
 \node at (.5,-.5) {$j$ edges};
 \end{tikzpicture}
 \]
 \begin{rem}
  The operad $\Gra_1$ acts on the space of functions on any symplectic vector space. 
 Under this action, $F(m_2)$ above acts as the Moyal product.
 \end{rem}
 
 \begin{lemma}\label{lem:AssGraInjective}
  The map $F$ is indeed a map of operads. It is furthermore injective.
 \end{lemma}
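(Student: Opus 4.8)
The plan is to establish the two assertions separately, in both cases leaning on the action of $\Gra_1$ on the polynomial functions $\K[V]$ of a symplectic vector space $V$ recalled in Remark \ref{rem:MCMoyal}. Under this action a graph $\Gamma\in\Gra_1(n)$ becomes the multidifferential operator obtained by placing one function at each vertex, contracting along every edge with the Poisson bivector, and multiplying the results; in particular $F(m_2)$ acts as the Moyal star product on $\K[V]$. I will use this dictionary to transport the (classical) associativity and non-degeneracy of the Moyal product back to the operad $\Gra_1$.

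First I would verify that $F$ is a morphism of operads, for which it suffices to check the single relation $F(m_2)\circ_1 F(m_2)=F(m_2)\circ_2 F(m_2)$ in $\Gra_1(3)$. This is a short combinatorial computation. Inserting the inner copy of $F(m_2)$ (with $l$ parallel edges, weight $1/l!$) into one vertex of the outer copy (with $k$ parallel edges, weight $1/k!$) and reconnecting each of the $k$ edges to one of the two new vertices yields, for every splitting $p+q=k$, the triangle graph $\Gamma_{l,p,q}$ carrying $l,p,q$ edges on its three sides, counted with multiplicity $\binom{k}{p}$. Since $\tfrac{1}{k!}\binom{k}{p}=\tfrac{1}{p!\,q!}$, both composites equal the symmetric sum
\[
\sum_{l,p,q\geq 0}\frac{1}{l!\,p!\,q!}\,\Gamma_{l,p,q},
\]
and therefore coincide. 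Conceptually, this is just the associativity of the Moyal product pushed through the faithfulness discussed below.

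For injectivity I would argue via associated graded spaces. The map $F$ is compatible with the Hodge filtration on $\Ass$, whose associated graded is $\Poiss$ (see \cite{TW}), and with the filtration of $\Gra_1$ by the number of edges: indeed the lowest pieces of $F(m_2)$ are the edgeless graph (the commutative product) and the one-edge graph (the Poisson bracket), which are precisely the generators of $\Poiss$. Hence $\gr F$ is the canonical map $G\colon\Poiss\to\Gra_1$ sending the product to the edgeless graph and the bracket to the one-edge graph. Since $\Ass(n)$ and $\Gra_1(n)$ are finite dimensional in each arity and the filtrations are bounded, injectivity of $F$ follows from injectivity of $G$. In turn, the symplectic action factors as $\Poiss\xrightarrow{G}\Gra_1\to\End(\K[V])$, and the $\Poiss$-action on $\K[V]$ is faithful once $\dim V$ is large; hence $G$, and with it $F$, is injective.

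The main obstacle is precisely this faithfulness: that distinct Poisson monomials (respectively distinct graphs) induce linearly independent multidifferential operators on $\K[V]$ for $\dim V$ large enough. I would prove it by choosing Darboux coordinates on $V=\K^{2N}$ with $N\geq n$, evaluating the operators on suitable products of coordinate functions, and observing that as $N$ grows the derivatives along independent Darboux directions separate the contributions of the different basis elements. Everything else in the argument is either the short combinatorial identity above or formal bookkeeping with the two filtrations.
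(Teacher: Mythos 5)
Your overall architecture matches the paper's: verify the single associativity relation for $F(m_2)$ by a direct count of triangle graphs, and obtain injectivity by passing to associated graded with respect to the Hodge filtration on the source and the edge filtration on $\Gra_1$, thereby reducing to the map $\Poiss=\Com\circ\Lie\to\Gra_1$ sending the product to the edgeless two-vertex graph and the bracket to the one-edge graph. Your explicit computation $\tfrac{1}{k!}\binom{k}{p}=\tfrac{1}{p!\,q!}$, giving the symmetric sum $\sum_{l,p,q}\tfrac{1}{l!\,p!\,q!}\Gamma_{l,p,q}$ for both composites, is in fact more detailed than the paper, which only remarks that the claim is ``analogous to the associativity of the Moyal product''; just make sure the edge orientations in $\Gamma_{l,p,q}$ are tracked consistently, since in $\Gra_1$ reversing an edge introduces a sign (permuting edges does not). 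Likewise your identification of $\gr F$ with the map $e_1\to\Gra_1$ is the same step the paper takes, phrased via the arity of the $\Com$-factor rather than the Hodge weight.

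The one genuine divergence is the endgame. The paper simply cites the injectivity of $e_1\to\Gra_1$ as well known, whereas you propose to deduce it from faithfulness of the $\Poiss$-action on $\K[V]$ for $\dim V$ large. Be aware that this faithfulness \emph{is} the nontrivial content here, and your sketch of it (``evaluate on suitable products of coordinate functions'') is not yet a proof: linear test functions kill all iterated brackets of length $\geq 3$, so one must choose polynomials of carefully graded degrees to separate, say, $\{\{f_1,f_2\},f_3\}$ from $\{\{f_1,f_3\},f_2\}$, and then further separate arbitrary \emph{products} of Lie monomials from one another. This is doable, but it is essentially equivalent in difficulty to the combinatorial leading-term argument that underlies the ``well known'' fact (extracting, from the image of each basis element of $\Poiss(n)$, a forest that occurs in no other image). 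So your route is legitimate and self-contained in spirit, but the step you flag as ``the main obstacle'' is exactly where the work lives, and as written it remains a gap rather than an argument.
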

 \begin{proof}
  The fact that the map is a map of operads is analogous to the fact that the Moyal bracket is associative.
 
 To see that $F$ is injective, consider the composition of maps of $\bbS$-modules
 \[
  G: \Poiss :=\Com \circ \Lie \stackrel{\cong}{\to} \Ass \to \Gra_1.
 \]
 On $\Gra$ there is a decreasing complete filtration by the number of edges.
 On $\Com \circ \Lie$ there is a filtration stemming from the arity of $\Com$.
 Clearly it is sufficient to show that the associated graded map $\gr G:\gr \Poiss \to \gr \Gra_1$ is injective.
 But it is not hard to check that $\gr G$ agrees with the operad map $e_1\to \Gra_1$ defined by letting
 \begin{align*}
  \cdot \wedge \cdot &\mapsto 
 \begin{tikzpicture}[baseline=-.65ex]
  \node[ext] (v) at (0,0) {};
  \node[ext] (w) at (.5,0) {};
 \end{tikzpicture}
 &
  [\cdot, \cdot] &\mapsto 
 \begin{tikzpicture}[baseline=-.65ex]
  \node[ext] (v) at (0,0) {};
  \node[ext] (w) at (.5,0) {};
 \draw (v) -- (w);
 \end{tikzpicture}
 \end{align*}
  This latter map is well known to be injective.
 \end{proof}
 
 By precomposing the map $F$ by the inclusion $\Lie\to \Ass$, we obtain a map $\Lie\to \Gra_1$.
 It is described by the Maurer-Cartan element $m$ in the deformation complex 
 \[
  \Def(\Lie\stackrel{0}{\to} \Gra_1).
 \]
 In fact, from this observation it follows once again that the element \eqref{equ:thetaMC} is Maurer-Cartan.
 We can use the machinery of operadic twisting \cite{vastwisting} to produce from the morphism $\Lie\to \Gra_1$
 an operad $\Tw\Gra_1$, together with a map $\Lie\to \Tw\Gra_1$, and with a projection $\Tw\Gra_1\to \Gra_1$.
 Elements of $\Tw\Gra_1$ are series in graphs with two kinds of vertices, internal and external, see Figure \ref{fig:TwGra} for an example.
 
 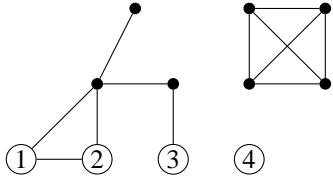
\begin{figure}
 \begin{align*}
 &
  \begin{tikzpicture}
   \node[ext] (v1) at (0,0) {1};
   \node[ext] (v2) at (1,0) {2};
   \node[ext] (v3) at (2,0) {3};
   \node[ext] (v4) at (3,0) {4};
   \node[int] (w1) at (1,1) {};
   \node[int] (w2) at (2,1) {};
   \node[int] (w3) at (1.5,2) {};
   \node[int] (w4) at (3,1) {};
   \node[int] (w5) at (4,1) {};
   \node[int] (w6) at (3,2) {};
   \node[int] (w7) at (4,2) {};
   \draw (v1) edge (v2) edge (w1)
         (w1) edge (w2) edge (w3) edge (v2) 
         (w2) edge (v3)
         (w4) edge (w5) edge (w6) edge (w7)
         (w5) edge (w6) edge (w7)
         (w6) edge (w7);
  \end{tikzpicture}
 \end{align*}
  \caption{\label{fig:TwGra} An element of $\Tw\Gra$.}
 \end{figure}
 
 We may consider a suboperad $\Graphs_1^\Theta$ that consists of series of graphs such that in each connected component there is at least one external vertex.
 \begin{prop}\label{prop:AssGraqiso}
  The map $\Ass\to\Gra_1$ factors through $\Graphs_1^\Theta$, and the map 
 \[
  \Ass \to \Graphs_1^\Theta
 \]
 is a quasi-isomorphism.
 \end{prop}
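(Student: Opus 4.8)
The plan is to prove the two claims in turn, the factoring being immediate and the quasi-isomorphism being the substance. For the factoring, I observe that the composite $\Ass\xrightarrow{F}\Gra_1\hookrightarrow\Tw\Gra_1$ lands in graphs all of whose vertices are external: $F(m_2)$ is a sum of graphs on the two external vertices $1,2$ carrying no internal vertices, and operadic insertion at external vertices never manufactures internal ones. Such graphs trivially have an external vertex in every connected component, so the image lies in $\Graphs_1^\Theta$; write $\bar F\colon \Ass\to\Graphs_1^\Theta$ for the corestriction. That $\bar F$ is a chain map amounts to the image consisting of cocycles for the twisting differential, which follows from $F$ being a map of operads together with the Maurer--Cartan equation for $m$; and $\bar F$ is injective by Lemma \ref{lem:AssGraInjective}. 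It remains to prove that $\bar F$ induces an isomorphism on cohomology, which may be checked arity by arity.

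For each fixed arity $N$ I would filter $\Graphs_1^\Theta(N)$ by the quantity $b=e-v$, exactly as in Corollary \ref{cor:odd}: the twisting differential $\delta+\co{m}{\cdot}$ splits into the single-edge vertex-splitting part $\delta$, which preserves $b$, and the part $\co{m}{\cdot}$ coming from the $(2j+1)$-fold edges of $m$, which raises $b$ by $2j\geq 2$. Convergence of the associated spectral sequence follows from the same boundedness argument used there, via Proposition \ref{prop:App2}, and on the first page one is left with $H(\Graphs_1^\Theta(N),\delta)$. The map $\bar F$ respects this filtration, and its associated graded $\gr\bar F$ is precisely the standard map $e_1=\Com\circ\Lie\to(\Graphs_1^\Theta,\delta)$ sending the commutative product to the edgeless graph on two external vertices and the Lie bracket to the single-edge graph --- the same map whose injectivity was exploited in Lemma \ref{lem:AssGraInjective}. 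Since the differential on $\Ass$ vanishes, its own filtration spectral sequence is trivial and converges to $\Ass\cong e_1$ (both of $\bbS$-module dimension $N!$ by the Poincar\'e--Birkhoff--Witt theorem). Thus, once $\gr\bar F$ is shown to be a quasi-isomorphism, a comparison of the two convergent spectral sequences --- in the spirit of Proposition \ref{prop:9} --- forces all higher differentials on the $\Graphs_1^\Theta$ side to vanish and yields the result.

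The whole argument therefore reduces to the single statement that
\[
\gr\bar F\colon e_1 \longrightarrow \bigl(\Graphs_1^\Theta(N),\delta\bigr)
\]
is a quasi-isomorphism, i.e.\ that the plain (single-edge-twisted) graph operad $\Graphs_1^\Theta$ is a graphical model for $e_1=\Ass$, the $\Theta$-condition having removed precisely the connected components built entirely out of internal vertices that would otherwise contribute the full graph cohomology $H(\fGCc_1,\delta)$. This is the $n=1$ instance of the classical statement that the Kontsevich graph operad models the little-disks operad \cite{grt}, and the way I would establish it combinatorially is by a second, internal filtration by the number of internal vertices, reducing to graphs whose internal vertices are all at least trivalent: univalent internal vertices (``antennae'') and bivalent internal vertices span acyclic subquotients and can be discarded, after which a degree and Euler-characteristic bookkeeping pins the cohomology down to $e_1$.

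I expect this last reduction to be the main obstacle, and for a reason visible throughout the body of the paper: in the odd ($n=1$) setting edges may occur with arbitrary multiplicity, so the acyclicity of the low-valence internal configurations is not the routine matter it is in the even case but must be argued with the same care as in Propositions \ref{prop:waved_acyclic} and \ref{prop:qi_waved-dotted}, controlling the interplay of odd-fold edges and the signs they carry. In this sense the step is the operadic shadow of Theorem \ref{thm:2} itself, and establishing it cleanly --- rather than re-importing the combinatorics of the dotted and waved complexes --- is exactly the point at which one would prefer to invoke an external input such as the formality of the associative (little-intervals) operad.
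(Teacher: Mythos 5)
Your argument is correct and essentially coincides with the paper's: both rest on the injectivity supplied by Lemma \ref{lem:AssGraInjective} together with a filtration whose associated graded is the standard single-edge complex $(\Graphs_1^\Theta,\delta)$ with cohomology $e_1$ of dimension $N!$ --- the same known input the paper also invokes without proof, so your flagging it as the ``main obstacle'' does not put you below the paper's own level of rigor here. The only difference is in the endgame packaging: the paper combines injectivity on cohomology (graphs without internal vertices cannot be exact) with the dimension bound $\dim H(\Graphs_1^\Theta(N))\leq N!$ read off the spectral sequence, whereas you run the standard comparison of convergent spectral sequences via the quasi-isomorphism on associated gradeds.
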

 \begin{proof}
  The first fact one shows by direct computation. It is the graphical version of the statement that in an associative algebra the commutator with any element is a derivation.
 
 To see that $\Ass \to \Graphs_1^\Theta$ is a quasi-isomorphism note first that the induced map on cohomology is injective. This is because the map is injective on chains by Lemma \ref{lem:AssGraInjective}, and none of the non-zero elements in the image may be exact because they are composed of graphs without internal vertices.
 Hence it suffices to check that the map is surjective on cohomology. 
 It clearly suffices to check that $\dim(H(\Graphs_1^\Theta(N))\leq N!$
 Again consider the descending complete filtration on $\Graphs_1^\Theta$ by the number of edges. The first term of this filtration is $\Graphs_1$, with $H^1(\Graphs_1)\cong \Poiss$. On later pages of the spectral sequence the cohomology can only get smaller. Hence we may conclude that $\dim H^1(\Graphs_1)\leq \dim e_1(N)=N!$.
 \end{proof}
 
 Note that the dg Lie algebra $\fGCc_1^\Theta$ may be extended by one dimension to a dg Lie algebra 
 \[
 \mathfrak{g} := \mathbb{K} L \oplus \fGCc_1^\Theta
 \]
 where the extra element $L$ is the generator of the grading by loop order, i.e., for a graph $\Gamma$ with $l$ loops we define $[L,\Gamma]=l\cdot \Gamma$.
 Note that the grading by loop order is compatible with the Lie brackets, but not with the differential. As a consequence, the element $L$ is not closed, the image being the $\Theta$-class.
 The action of $\fGCc_1^\Theta$ on $\Graphs_1^\Theta$ may naturally be extended to $\alg g$.
 
The operad $\Graphs_1^\Theta$ furthermore carries a complete descending filtration by the number of edges in graphs. This filtration is equivalent to the Hodge filtration on $\Ass\subset \Graphs_1^\Theta$.
 
 Now consider the operadic deformation complex 
 \[
  \Def(\Ass_\infty \to \Graphs_1^\Theta)
 =
 \prod_{N\geq 1} \Hom_{S_N}(\Ass^\vee(N),\Graphs_1^\Theta(N))[-1]
 =
 \prod_{N\geq 1} \Graphs_1^\Theta(N)[1-N].
 \]
Via the action of $\alg g$ on the target we obtain a map $\alpha:\alg g\to \Def(\Ass_\infty \to \Graphs_1^\Theta)[1]$.
But the complex $\Def(\Ass_\infty \to \Graphs_1^\Theta)$ on the right-hand side is acyclic by the rigidity of the associative operad, i.e., by Lemma \ref{lem:assrigid} above.
The map $\alpha$ is injective on the level of chain complexes, and one can likely use a variant of the methods of \cite{grt} to show that $\alpha$ induces an injection on homology. 
Modulo this statement we hence obtain an alternative proof of Theorem \ref{thm:2} stating that $\alg g$ is acyclic. (The presence of the theta-class in Theorem \ref{thm:2} reflects the non-presence of the additional element $L$ above.)

\section{Explicit canceling of the loop $L_7$}\label{app:7loop}

As we can conclude from the table \ref{tbl:oddcanceling}, the loop $L_7$ cancels a graph with $b=4$. Let us calculate that graph more explicitly. The following diagram describes how the differential acts on $L_7$.
$$
\begin{tikzpicture}[baseline=-.65ex]
 \node (a1) at (0,0) {$L_7$};
 \node (a2) at (2,0) {$0$};
 \node (b1) at (0,-2) {$\tilde R_7^1$};
 \node (b2) at (2,-2) {$\tilde W_7^1$};
 \node (b3) at (4,-2) {$0$};
 \node (c2) at (2,-4) {$\tilde W_7^2$};
 \node (c3) at (4,-4) {$0$};
 \draw (a1) edge[|->] (a2);
 \draw (a1) edge[|->] (b2);
 \draw (b2) edge[|->] (b3);
 \draw (b1) edge[|->] node[above left] {$-$} (b2);
 \draw (a1) edge[|-left to] (c2);
 \draw (b1) edge[|-left to] (c2);
 \draw (c2) edge[|->] (c3);
\end{tikzpicture}
$$
Horizontal arrows describe the standard differential $\delta$, diagonal arrows going one row down\footnote{In the full table it goes two rows down, but maps that change parity of $b$ are $0$, so we skip here graphs with odd $b$.} describe the map $[\Theta,\cdot]$ \footnote{ Here $\Theta$ is the theta graph with two vertices connected by three edges.}, and the diagonal arrow going two rows down describes the map
$[\begin{tikzpicture}[baseline=-.65ex]
 \node[int] (v) at (-.3,0) {};
 \node[int] (w) at (.3,0) {};
 \draw (v) edge[very thick] node[above] {$\scriptstyle 5$} (w);
\end{tikzpicture}
,\cdot]$. Harpooned arrows towards $\tilde W_7^2$ are summed.

Since $\tilde W_7^1=[\Theta,L_7]$ is closed and the cohomology of $\delta$ is $0$ at this point, there exists $\tilde R_7^1$ such that $\delta \tilde R_7^1=-W_7^2$. $\tilde W_7^2=
[\begin{tikzpicture}[baseline=-.65ex]
 \node[int] (v) at (-.3,0) {};
 \node[int] (w) at (.3,0) {};
 \draw (v) edge[very thick] node[above] {$\scriptstyle 5$} (w);
\end{tikzpicture}
,L_7]
+[\Theta,\tilde R_7^1]$ is also closed. We want to show that it is not exact.

Let us write $\tilde W_7^2=W_7^2+O_7^2$ where $W_7^2$ is the part with all vertices at least 3-valent, and $O_7^2$ is the part with at least one vertex of maximum valence 2. That condition is not changed by differential $\delta$, so $\delta \tilde W_7^2=0$ implies $\delta W_7^2=\delta O_7^2=0$. \cite[Proof of Proposition 3.4]{grt} implies that the $O_7^2$ part is exact, so we can restrict our consideration to $W_7^2$.

$\tilde W_7^2$ lives among graphs with $2e=3v$, so $W_7^2$ can alternatively be characterized as a part where all vertices are exactly 3-valent, i.e.\ there are no vertices with valence 4 or more. The map
$[\begin{tikzpicture}[baseline=-.65ex]
 \node[int] (v) at (-.3,0) {};
 \node[int] (w) at (.3,0) {};
 \draw (v) edge[very thick] node[above] {$\scriptstyle 5$} (w);
\end{tikzpicture}
,\cdot]$
always adds a 4 or more-valent vertex, so diagonal arrow going two rows down can be ignored in calculation of $W_7^2$. Furthermore, the map $[\Theta,\cdot]$ can not reduce the valence of the vertices, so in $\tilde R_7^1$ we can ignore graphs with 4 or more-valent vertex. Let the remaining part be denoted by $R_7^1$. 

The differential $\delta$ can not increase the valence of the vertices, so $\delta R_7^1$ does not have 4 or more-valent vertices, too. Let us call a sequence of edges and 2-valent vertices between two 3-valent vertices a chain, and let number of edges in the chain be its length. Differential increases length of chains with even length by $1$. Therefore, $\delta R_7^1$ has at least one chain with odd length different than 1. Let a space spanned with graphs without 4 or more-valent vertices and with at least one chain of odd length different than 1 be denoted $S$.

Let $W_7^1$ be projection of $\tilde W_7^1$ on $S$, and let $O_7^1:=\tilde W_7^1-W_7^1$ be the remaining part. If we can choose $\Gamma$ with $v=7$, $b=2$, such that $W_7^1:=\delta \Gamma$, then $\delta W_7^1=0=\delta O_7^1$, and $O_7^1$ is also $\delta$ of something, say $\Gamma'$. Since $\delta R_7^1\in S$, nothing from $\Gamma'$ is part of $R_7^1$ and we can chose $R_7^1$ to be $-\Gamma$. Now, we have simplified diagram
$$
\begin{tikzpicture}[baseline=-.65ex]
 \node (a1) at (0,0) {$L_7$};
 \node (a2) at (2,0) {$0$};
 \node (b1) at (0,-2) {$R_7^1$};
 \node (b2) at (2,-2) {$W_7^1$};
 \node (b3) at (4,-2) {$0$};
 \node (c2) at (2,-4) {$W_7^2$};
 \node (c3) at (4,-4) {$0$};
 \draw (a1) edge[|->] (a2);
 \draw (a1) edge[|->] (b2);
 \draw (b2) edge[|->] (b3);
 \draw (b1) edge[|->] node[above] {$-$} (b2);
 \draw (b1) edge[|->] (c2);
 \draw (c2) edge[|->] (c3);
\end{tikzpicture}
$$
where horizontal arrows again describes standard differential $\delta$ and diagonal arrows represent projection of $[\Theta,\cdot]$ to $S$ (upper arrow) and to graphs without 2-valent vertices (bottom arrow).

In the following calculation we have to be careful with the signs. Therefore we make the convention that the edges in the main loop go clockwise, and vertices are named with consecutive numbers starting from 1, increasing also clockwise. If there are odd vertices in the loop, the starting vertex does not matter, but if there are even vertices, moving the starting vertex by one step changes the sign of the graph, and therefore we will mark the starting vertex. Vertices that are added by $[\Theta,\cdot]$ go after vertices from the loop, and should be labelled if there are more than 1, and we make the convention that all edges are heading towards them.

Careful calculation leads to
$$
W_7^1=
\begin{tikzpicture}[baseline=0ex]
 \node[int] (t0) at (-90:1) {};
 \node[int] (t1) at (-360/7-90:1) {};
 \node[int] (t2) at (-2*360/7-90:1) {};
 \node[int] (t3) at (-3*360/7-90:1) {};
 \node[int] (t4) at (-4*360/7-90:1) {};
 \node[int] (t5) at (-5*360/7-90:1) {};
 \node[int] (t6) at (-6*360/7-90:1) {};
 \node[int] (s) at (0,0) {};
 \draw (t0) edge (t1);
 \draw (t1) edge (t2);
 \draw (t2) edge (t3);
 \draw (t3) edge (t4);
 \draw (t4) edge (t5);
 \draw (t5) edge (t6);
 \draw (t6) edge (t0);
 \draw (s) edge (t0);
 \draw (s) edge (t1);
 \draw (s) edge (t2);
\end{tikzpicture}
+
\begin{tikzpicture}[baseline=0ex]
 \node[int] (t0) at (-90:1) {};
 \node[int] (t1) at (-360/7-90:1) {};
 \node[int] (t2) at (-2*360/7-90:1) {};
 \node[int] (t3) at (-3*360/7-90:1) {};
 \node[int] (t4) at (-4*360/7-90:1) {};
 \node[int] (t5) at (-5*360/7-90:1) {};
 \node[int] (t6) at (-6*360/7-90:1) {};
 \node[int] (s) at (0,0) {};
 \draw (t0) edge (t1);
 \draw (t1) edge (t2);
 \draw (t2) edge (t3);
 \draw (t3) edge (t4);
 \draw (t4) edge (t5);
 \draw (t5) edge (t6);
 \draw (t6) edge (t0);
 \draw (s) edge (t0);
 \draw (s) edge (t1);
 \draw (s) edge (t4);
\end{tikzpicture}
+
\begin{tikzpicture}[baseline=0ex]
 \node[int] (t0) at (-90:1) {};
 \node[int] (t1) at (-360/7-90:1) {};
 \node[int] (t2) at (-2*360/7-90:1) {};
 \node[int] (t3) at (-3*360/7-90:1) {};
 \node[int] (t4) at (-4*360/7-90:1) {};
 \node[int] (t5) at (-5*360/7-90:1) {};
 \node[int] (t6) at (-6*360/7-90:1) {};
 \node[int] (s) at (0,0) {};
 \draw (t0) edge (t1);
 \draw (t1) edge (t2);
 \draw (t2) edge (t3);
 \draw (t3) edge (t4);
 \draw (t4) edge (t5);
 \draw (t5) edge (t6);
 \draw (t6) edge (t0);
 \draw (s) edge (t0);
 \draw (s) edge (t2);
 \draw (s) edge (t4);
\end{tikzpicture}
$$
Actually, we should multiply everything with $42$ (starting at 7 vertices and permuting added edges in 6 ways), but it is safe to ignore prefactor, as soon as the ratio of factors of different graphs remains the same.

One can check that the following is by differential mapped to $-W_7^1$.
$$
R_7^1=
-
\begin{tikzpicture}[baseline=0ex]
 \node[int] (t0) at (-90:1) {};
 \node[below] at (t0) {$\scriptstyle *$};
 \node[int] (t1) at (-360/6-90:1) {};
 \node[int] (t2) at (-2*360/6-90:1) {};
 \node[int] (t3) at (-3*360/6-90:1) {};
 \node[int] (t4) at (-4*360/6-90:1) {};
 \node[int] (t5) at (-5*360/6-90:1) {};
 \node[int] (s) at (0,0) {};
 \draw (t0) edge (t1);
 \draw (t1) edge (t2);
 \draw (t2) edge (t3);
 \draw (t3) edge (t4);
 \draw (t4) edge (t5);
 \draw (t5) edge (t0);
 \draw (s) edge (t0);
 \draw (s) edge (t1);
 \draw (s) edge (t2);
\end{tikzpicture}
+
\begin{tikzpicture}[baseline=0ex]
 \node[int] (t0) at (-90:1) {};
 \node[below] at (t0) {$\scriptstyle *$};
 \node[int] (t1) at (-360/6-90:1) {};
 \node[int] (t2) at (-2*360/6-90:1) {};
 \node[int] (t3) at (-3*360/6-90:1) {};
 \node[int] (t4) at (-4*360/6-90:1) {};
 \node[int] (t5) at (-5*360/6-90:1) {};
 \node[int] (s) at (0,0) {};
 \draw (t0) edge (t1);
 \draw (t1) edge (t2);
 \draw (t2) edge (t3);
 \draw (t3) edge (t4);
 \draw (t4) edge (t5);
 \draw (t5) edge (t0);
 \draw (s) edge (t0);
 \draw (s) edge (t1);
 \draw (s) edge (t3);
\end{tikzpicture}
-\frac{1}{3}
\begin{tikzpicture}[baseline=0ex]
 \node[int] (t0) at (-90:1) {};
 \node[below] at (t0) {$\scriptstyle *$};
 \node[int] (t1) at (-360/6-90:1) {};
 \node[int] (t2) at (-2*360/6-90:1) {};
 \node[int] (t3) at (-3*360/6-90:1) {};
 \node[int] (t4) at (-4*360/6-90:1) {};
 \node[int] (t5) at (-5*360/6-90:1) {};
 \node[int] (s) at (0,0) {};
 \draw (t0) edge (t1);
 \draw (t1) edge (t2);
 \draw (t2) edge (t3);
 \draw (t3) edge (t4);
 \draw (t4) edge (t5);
 \draw (t5) edge (t0);
 \draw (s) edge (t0);
 \draw (s) edge (t2);
 \draw (s) edge (t4);
\end{tikzpicture}
$$
So, we get
$$
W_7^2=
-
\begin{tikzpicture}[baseline=0ex]
 \node[int] (t0) at (-90:1) {};
 \node[below] at (t0) {$\scriptstyle *$};
 \node[int] (t1) at (-360/6-90:1) {};
 \node[int] (t2) at (-2*360/6-90:1) {};
 \node[int] (t3) at (-3*360/6-90:1) {};
 \node[int] (t4) at (-4*360/6-90:1) {};
 \node[int] (t5) at (-5*360/6-90:1) {};
 \node[int] (s) at (-360/6-90:.2) {};
 \node[below right] at (s) {$\scriptstyle 1$};
 \node[int] (s2) at (-4*360/6-90:.2) {};
 \node[above left] at (s2) {$\scriptstyle 2$};
 \draw (t0) edge (t1);
 \draw (t1) edge (t2);
 \draw (t2) edge (t3);
 \draw (t3) edge (t4);
 \draw (t4) edge (t5);
 \draw (t5) edge (t0);
 \draw (s) edge (t0);
 \draw (s) edge (t1);
 \draw (s) edge (t2);
 \draw (s2) edge (t3);
 \draw (s2) edge (t4);
 \draw (s2) edge (t5);
\end{tikzpicture}
+
\begin{tikzpicture}[baseline=0ex]
 \node[int] (t0) at (-90:1) {};
 \node[below] at (t0) {$\scriptstyle *$};
 \node[int] (t1) at (-360/6-90:1) {};
 \node[int] (t2) at (-2*360/6-90:1) {};
 \node[int] (t3) at (-3*360/6-90:1) {};
 \node[int] (t4) at (-4*360/6-90:1) {};
 \node[int] (t5) at (-5*360/6-90:1) {};
 \node[int] (s) at (-360/6-90:.2) {};
 \node[below right] at (s) {$\scriptstyle 1$};
 \node[int] (s2) at (-4*360/6-90:.2) {};
 \node[above] at (s2) {$\scriptstyle 2$};
 \draw (t0) edge (t1);
 \draw (t1) edge (t2);
 \draw (t2) edge (t3);
 \draw (t3) edge (t4);
 \draw (t4) edge (t5);
 \draw (t5) edge (t0);
 \draw (s) edge (t0);
 \draw (s) edge (t1);
 \draw (s) edge (t3);
 \draw (s2) edge (t2);
 \draw (s2) edge (t4);
 \draw (s2) edge (t5);
\end{tikzpicture}
-\frac{1}{3}
\begin{tikzpicture}[baseline=0ex]
 \node[int] (t0) at (-90:1) {};
 \node[below] at (t0) {$\scriptstyle *$};
 \node[int] (t1) at (-360/6-90:1) {};
 \node[int] (t2) at (-2*360/6-90:1) {};
 \node[int] (t3) at (-3*360/6-90:1) {};
 \node[int] (t4) at (-4*360/6-90:1) {};
 \node[int] (t5) at (-5*360/6-90:1) {};
 \node[int] (s) at (-2*360/6-90:.2) {};
 \node[above] at (s) {$\scriptstyle 1$};
 \node[int] (s2) at (-5*360/6-90:.2) {};
 \node[below] at (s2) {$\scriptstyle 2$};
 \draw (t0) edge (t1);
 \draw (t1) edge (t2);
 \draw (t2) edge (t3);
 \draw (t3) edge (t4);
 \draw (t4) edge (t5);
 \draw (t5) edge (t0);
 \draw (s) edge (t0);
 \draw (s) edge (t2);
 \draw (s) edge (t4);
 \draw (s2) edge (t1);
 \draw (s2) edge (t3);
 \draw (s2) edge (t5);
\end{tikzpicture}
$$

3-valent graphs (all vertices are 3-valent) in $\fGCc_1$ (with odd parity of vertices and edge directions and even parity on edges) can alternatively be described with even parity on all vertices, edges and edge directions, but with cyclic order of edges at each vertex, where reversing any order moves the graph to its negative. We rewrite $W_7^2$ in this description assuming that cyclic order on every vertex is clockwise.
$$
W_7^2=
-
\begin{tikzpicture}[baseline=0ex]
 \node[int] (t0) at (-90:1) {};
 \node[int] (t1) at (-360/6-90:1) {};
 \node[int] (t2) at (-2*360/6-90:1) {};
 \node[int] (t3) at (-3*360/6-90:1) {};
 \node[int] (t4) at (-4*360/6-90:1) {};
 \node[int] (t5) at (-5*360/6-90:1) {};
 \node[int] (s) at (-360/6-90:.2) {};
 \node[int] (s2) at (-4*360/6-90:.2) {};
 \draw (t0) edge (t1);
 \draw (t1) edge (t2);
 \draw (t2) edge (t3);
 \draw (t3) edge (t4);
 \draw (t4) edge (t5);
 \draw (t5) edge (t0);
 \draw (s) edge (t0);
 \draw (s) edge (t1);
 \draw (s) edge (t2);
 \draw (s2) edge (t3);
 \draw (s2) edge (t4);
 \draw (s2) edge (t5);
\end{tikzpicture}
-
\begin{tikzpicture}[baseline=0ex]
 \node[int] (t0) at (-90:1) {};
 \node[int] (t1) at (-360/6-90:1) {};
 \node[int] (t2) at (-2*360/6-90:1) {};
 \node[int] (t3) at (-3*360/6-90:1) {};
 \node[int] (t4) at (-4*360/6-90:1) {};
 \node[int] (t5) at (-5*360/6-90:1) {};
 \node[int] (s) at (-360/6-90:.2) {};
 \node[int] (s2) at (-4*360/6-90:.2) {};
 \draw (t0) edge (t1);
 \draw (t1) edge (t2);
 \draw (t2) edge (t3);
 \draw (t3) edge (t4);
 \draw (t4) edge (t5);
 \draw (t5) edge (t0);
 \draw (s) edge (t0);
 \draw (s) edge (t1);
 \draw (s) edge (t3);
 \draw (s2) edge (t2);
 \draw (s2) edge (t4);
 \draw (s2) edge (t5);
\end{tikzpicture}
+\frac{1}{3}
\begin{tikzpicture}[baseline=0ex]
 \node[int] (t0) at (-90:1) {};
 \node[int] (t1) at (-360/6-90:1) {};
 \node[int] (t2) at (-2*360/6-90:1) {};
 \node[int] (t3) at (-3*360/6-90:1) {};
 \node[int] (t4) at (-4*360/6-90:1) {};
 \node[int] (t5) at (-5*360/6-90:1) {};
 \node[int] (s) at (-2*360/6-90:.2) {};
 \node[int] (s2) at (-5*360/6-90:.2) {};
 \draw (t0) edge (t1);
 \draw (t1) edge (t2);
 \draw (t2) edge (t3);
 \draw (t3) edge (t4);
 \draw (t4) edge (t5);
 \draw (t5) edge (t0);
 \draw (s) edge (t0);
 \draw (s) edge (t2);
 \draw (s) edge (t4);
 \draw (s2) edge (t1);
 \draw (s2) edge (t3);
 \draw (s2) edge (t5);
\end{tikzpicture}
$$

We say the coloring of a 3-valent graph is a map from the set of edges to set of 3 colors (say $\{solid,dotted,waved\}$) such that at each vertex 3 adjacent edges have 3 different colors. We fix a cycle order on colors. Let $\Gamma\in\fGCc_1$ be a 3-valent graph and $C$ its coloring. We define $f(C)=\prod_{v\in V(\Gamma)}f(v)$ where $v$ goes through all vertices of $\Gamma$ and $f(v)$ is $1$ if cycle order of edges at that vertex agrees with cycle order of its colors. We also define
$$
f(\Gamma)=\sum_{C\text{ coloring of }\Gamma}f(C),
$$
and extend it linearly to its linear combinations. The following proposition is easy to check.

\begin{prop}
For every graph $\Gamma\in\fGCc_1$ with exactly one 4-valent vertex and all other 3-valent vertices it holds that $f(\delta\Gamma)=0$.
\end{prop}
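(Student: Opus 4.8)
The plan is to localize the computation at the unique four-valent vertex and to reduce the vanishing to the Jacobi identity (equivalently the IHX relation) satisfied by the Levi-Civita symbol on the three colours. First I would determine which graphs occurring in $\delta\Gamma$ are trivalent, and hence contribute to $f(\delta\Gamma)$. Since $\delta$ splits one vertex $x$ of valence $k$ into two vertices of valences $a+1$ and $b+1$ with $a+b=k$ (joined by a new edge) and leaves all other vertices untouched, the only way to obtain an all-trivalent graph from $\Gamma$, which has one four-valent vertex $v_0$ and all others trivalent, is to split $v_0$ itself in a balanced $2+2$ fashion. Labelling the four half-edges at $v_0$ by $1,2,3,4$, there are exactly three such resolutions, corresponding to the three partitions $\{12\mid 34\}$, $\{13\mid 24\}$, $\{14\mid 23\}$ of the edges between the two new trivalent vertices $v'$ and $v''$; denote the resulting graphs $\Gamma_{12\mid 34}$, $\Gamma_{13\mid 24}$, $\Gamma_{14\mid 23}$. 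Thus $f(\delta\Gamma)$ is a signed sum of $f$ evaluated on these three graphs, the signs being dictated by the orientation conventions of vertex splitting.

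Next I would make the state-sum nature of $f$ explicit and factor out the part of the graph that does not change. Each of the three graphs has the same edge set, namely the four ``outer'' edges $1,2,3,4$, the edges of the unchanged trivalent remainder, and the single new edge $e=(v',v'')$. Summing over proper three-colourings, I would fix the colours on all edges except $e$; this fixes the colours $i_1,i_2,i_3,i_4$ on the outer edges and, whenever the colouring is proper at every vertex other than $v',v''$, produces a factor $\prod_{w\neq v',v''} f(w)$ that is identical for all three resolutions. The residual sum over the colour of $e$, together with the local signs $f(v')f(v'')$, yields a purely local tensor $T_{ab\mid cd}(i_1,i_2,i_3,i_4)=\sum_{m}\varepsilon_{i_a i_b m}\,\varepsilon_{i_c i_d m}$, where $\varepsilon_{xyz}=\pm 1$ is the sign comparing the cyclic order of the three colours with the fixed cyclic order of colours (and $0$ unless $x,y,z$ are distinct). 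Hence $f(\delta\Gamma)=\sum_{c}\big(\prod_{w}f(w)\big)\,\big(T_{12\mid 34}-T_{13\mid 24}+T_{14\mid 23}\big)$, the outer sum being over colourings $c$ of the outer and remainder edges.

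Finally I would invoke the contraction identity $\sum_{m}\varepsilon_{abm}\varepsilon_{cdm}=\delta_{ac}\delta_{bd}-\delta_{ad}\delta_{bc}$, which gives $T_{12\mid 34}=\delta_{13}\delta_{24}-\delta_{14}\delta_{23}$ together with its two cyclic analogues. Substituting, the alternating combination $T_{12\mid 34}-T_{13\mid 24}+T_{14\mid 23}$ cancels identically; this is precisely the Jacobi identity for $\varepsilon$, i.e.\ the IHX relation among the three ways of resolving a four-valent vertex into two trivalent ones. Therefore every summand vanishes and $f(\delta\Gamma)=0$. I expect the only genuinely delicate point to be the orientation bookkeeping in the second step: one must verify that the three resolutions really enter $\delta\Gamma$ with the alternating signs $+,-,+$ relative to the colour-order signs $f(v')f(v'')$, so that the local combination is exactly the Jacobi one rather than a non-vanishing linear combination. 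Degenerate resolutions (for instance those producing a double edge) require no separate treatment, since the above is an identity of tensors and annihilates them automatically.
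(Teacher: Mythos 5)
Your argument is correct, and it is essentially the argument the authors intend: the paper offers no proof of this proposition beyond the assertion that it is ``easy to check''. Your reductions are all sound: among all terms of $\delta\Gamma=[m,\Gamma]$ only the three balanced $2{+}2$ splittings of the unique $4$-valent vertex yield trivalent graphs (splitting a trivalent vertex, or attaching a univalent vertex, always leaves a vertex of valence $\neq 3$, on which no admissible colouring exists), the state sum defining $f$ factors through the unchanged part of the graph, and the contraction identity $\sum_m \varepsilon_{abm}\varepsilon_{cdm}=\delta_{ac}\delta_{bd}-\delta_{ad}\delta_{bc}$ annihilates the alternating combination $T_{12\mid 34}-T_{13\mid 24}+T_{14\mid 23}$. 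The sign verification you flag but do not carry out is indeed the only real content, and it does come out as you expect: each partition $\{ab\mid cd\}$ occurs twice in $\delta\Gamma$ with equal sign (the two distributions of half-edges are related by exchanging the two new vertices and reversing the new edge, both odd operations, so their composite is even), and after translating to the cyclic-order description used to define $f$ the three partitions carry exactly the alternating IHX signs --- this is the classical identification of the trivalent part of the odd graph complex modulo the image of $\delta$ with Jacobi diagrams modulo AS and IHX, equivalently the statement that $f$ is the weight system of the metrized Lie algebra $\mathfrak{so}_3$ with structure constants $\varepsilon_{abc}$. To make your proof self-contained you should carry out that translation explicitly for one partition (the other two follow by relabelling the four half-edges); no new idea is needed, and your treatment of degenerate resolutions is also correct.
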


We calculate $f(W_7^2)$ by considering all colorings of graphs in linear combination of $f(W_7^2)$. Permuted colorings are also possible, but they head to the same $f$, so it is enough to multiply the final result by $6$.
\begin{multline*}
f(W_7^2)=
-f\left(
\begin{tikzpicture}[baseline=0ex]
 \node[int] (t0) at (-90:1) {};
 \node[int] (t1) at (-360/6-90:1) {};
 \node[int] (t2) at (-2*360/6-90:1) {};
 \node[int] (t3) at (-3*360/6-90:1) {};
 \node[int] (t4) at (-4*360/6-90:1) {};
 \node[int] (t5) at (-5*360/6-90:1) {};
 \node[int] (s) at (-360/6-90:.2) {};
 \node[int] (s2) at (-4*360/6-90:.2) {};
 \draw (t0) edge (t1);
 \draw (t1) edge (t2);
 \draw (t2) edge (t3);
 \draw (t3) edge (t4);
 \draw (t4) edge (t5);
 \draw (t5) edge (t0);
 \draw (s) edge (t0);
 \draw (s) edge (t1);
 \draw (s) edge (t2);
 \draw (s2) edge (t3);
 \draw (s2) edge (t4);
 \draw (s2) edge (t5);
\end{tikzpicture}
\right)-f\left(
\begin{tikzpicture}[baseline=0ex]
 \node[int] (t0) at (-90:1) {};
 \node[int] (t1) at (-360/6-90:1) {};
 \node[int] (t2) at (-2*360/6-90:1) {};
 \node[int] (t3) at (-3*360/6-90:1) {};
 \node[int] (t4) at (-4*360/6-90:1) {};
 \node[int] (t5) at (-5*360/6-90:1) {};
 \node[int] (s) at (-360/6-90:.2) {};
 \node[int] (s2) at (-4*360/6-90:.2) {};
 \draw (t0) edge (t1);
 \draw (t1) edge (t2);
 \draw (t2) edge (t3);
 \draw (t3) edge (t4);
 \draw (t4) edge (t5);
 \draw (t5) edge (t0);
 \draw (s) edge (t0);
 \draw (s) edge (t1);
 \draw (s) edge (t3);
 \draw (s2) edge (t2);
 \draw (s2) edge (t4);
 \draw (s2) edge (t5);
\end{tikzpicture}
\right)+\frac{1}{3}f\left(
\begin{tikzpicture}[baseline=0ex]
 \node[int] (t0) at (-90:1) {};
 \node[int] (t1) at (-360/6-90:1) {};
 \node[int] (t2) at (-2*360/6-90:1) {};
 \node[int] (t3) at (-3*360/6-90:1) {};
 \node[int] (t4) at (-4*360/6-90:1) {};
 \node[int] (t5) at (-5*360/6-90:1) {};
 \node[int] (s) at (-2*360/6-90:.2) {};
 \node[int] (s2) at (-5*360/6-90:.2) {};
 \draw (t0) edge (t1);
 \draw (t1) edge (t2);
 \draw (t2) edge (t3);
 \draw (t3) edge (t4);
 \draw (t4) edge (t5);
 \draw (t5) edge (t0);
 \draw (s) edge (t0);
 \draw (s) edge (t2);
 \draw (s) edge (t4);
 \draw (s2) edge (t1);
 \draw (s2) edge (t3);
 \draw (s2) edge (t5);
\end{tikzpicture}\right)=\\
=-6f\left(
\begin{tikzpicture}[baseline=0ex]
 \node[int] (t0) at (-90:1) {};
 \node[int] (t1) at (-360/6-90:1) {};
 \node[int] (t2) at (-2*360/6-90:1) {};
 \node[int] (t3) at (-3*360/6-90:1) {};
 \node[int] (t4) at (-4*360/6-90:1) {};
 \node[int] (t5) at (-5*360/6-90:1) {};
 \node[int] (s) at (-360/6-90:.2) {};
 \node[int] (s2) at (-4*360/6-90:.2) {};
 \draw (t0) edge (t1);
 \draw (t1) edge[dotted] (t2);
 \draw (t2) edge[snakeit] (t3);
 \draw (t3) edge (t4);
 \draw (t4) edge[dotted] (t5);
 \draw (t5) edge[snakeit] (t0);
 \draw (s) edge[dotted] (t0);
 \draw (s) edge[snakeit] (t1);
 \draw (s) edge (t2);
 \draw (s2) edge[dotted] (t3);
 \draw (s2) edge[snakeit] (t4);
 \draw (s2) edge (t5);
\end{tikzpicture}
\right)-6f\left(
\begin{tikzpicture}[baseline=0ex]
 \node[int] (t0) at (-90:1) {};
 \node[int] (t1) at (-360/6-90:1) {};
 \node[int] (t2) at (-2*360/6-90:1) {};
 \node[int] (t3) at (-3*360/6-90:1) {};
 \node[int] (t4) at (-4*360/6-90:1) {};
 \node[int] (t5) at (-5*360/6-90:1) {};
 \node[int] (s) at (-360/6-90:.2) {};
 \node[int] (s2) at (-4*360/6-90:.2) {};
 \draw (t0) edge (t1);
 \draw (t1) edge[dotted] (t2);
 \draw (t2) edge[snakeit] (t3);
 \draw (t3) edge[dotted] (t4);
 \draw (t4) edge (t5);
 \draw (t5) edge[snakeit] (t0);
 \draw (s) edge[dotted] (t0);
 \draw (s) edge[snakeit] (t1);
 \draw (s) edge (t2);
 \draw (s2) edge (t3);
 \draw (s2) edge[snakeit] (t4);
 \draw (s2) edge[dotted] (t5);
\end{tikzpicture}
\right)-6f\left(
\begin{tikzpicture}[baseline=0ex]
 \node[int] (t0) at (-90:1) {};
 \node[int] (t1) at (-360/6-90:1) {};
 \node[int] (t2) at (-2*360/6-90:1) {};
 \node[int] (t3) at (-3*360/6-90:1) {};
 \node[int] (t4) at (-4*360/6-90:1) {};
 \node[int] (t5) at (-5*360/6-90:1) {};
 \node[int] (s) at (-360/6-90:.2) {};
 \node[int] (s2) at (-4*360/6-90:.2) {};
 \draw (t0) edge (t1);
 \draw (t1) edge[dotted] (t2);
 \draw (t2) edge[snakeit] (t3);
 \draw (t3) edge[dotted] (t4);
 \draw (t4) edge (t5);
 \draw (t5) edge[snakeit] (t0);
 \draw (s) edge[dotted] (t0);
 \draw (s) edge[snakeit] (t1);
 \draw (s) edge (t3);
 \draw (s2) edge (t2);
 \draw (s2) edge[snakeit] (t4);
 \draw (s2) edge[dotted] (t5);
\end{tikzpicture}
\right)+\\
+2f\left(
\begin{tikzpicture}[baseline=0ex]
 \node[int] (t0) at (-90:1) {};
 \node[int] (t1) at (-360/6-90:1) {};
 \node[int] (t2) at (-2*360/6-90:1) {};
 \node[int] (t3) at (-3*360/6-90:1) {};
 \node[int] (t4) at (-4*360/6-90:1) {};
 \node[int] (t5) at (-5*360/6-90:1) {};
 \node[int] (s) at (-2*360/6-90:.2) {};
 \node[int] (s2) at (-5*360/6-90:.2) {};
 \draw (t0) edge (t1);
 \draw (t1) edge[dotted] (t2);
 \draw (t2) edge[snakeit] (t3);
 \draw (t3) edge (t4);
 \draw (t4) edge[dotted] (t5);
 \draw (t5) edge[snakeit] (t0);
 \draw (s) edge[dotted] (t0);
 \draw (s) edge (t2);
 \draw (s) edge[snakeit] (t4);
 \draw (s2) edge[snakeit] (t1);
 \draw (s2) edge[dotted] (t3);
 \draw (s2) edge (t5);
\end{tikzpicture}
\right)+2f\left(
\begin{tikzpicture}[baseline=0ex]
 \node[int] (t0) at (-90:1) {};
 \node[int] (t1) at (-360/6-90:1) {};
 \node[int] (t2) at (-2*360/6-90:1) {};
 \node[int] (t3) at (-3*360/6-90:1) {};
 \node[int] (t4) at (-4*360/6-90:1) {};
 \node[int] (t5) at (-5*360/6-90:1) {};
 \node[int] (s) at (-2*360/6-90:.2) {};
 \node[int] (s2) at (-5*360/6-90:.2) {};
 \draw (t0) edge (t1);
 \draw (t1) edge[dotted] (t2);
 \draw (t2) edge[snakeit] (t3);
 \draw (t3) edge[dotted] (t4);
 \draw (t4) edge (t5);
 \draw (t5) edge[snakeit] (t0);
 \draw (s) edge[dotted] (t0);
 \draw (s) edge (t2);
 \draw (s) edge[snakeit] (t4);
 \draw (s2) edge[snakeit] (t1);
 \draw (s2) edge (t3);
 \draw (s2) edge[dotted] (t5);
\end{tikzpicture}
\right)+2f\left(
\begin{tikzpicture}[baseline=0ex]
 \node[int] (t0) at (-90:1) {};
 \node[int] (t1) at (-360/6-90:1) {};
 \node[int] (t2) at (-2*360/6-90:1) {};
 \node[int] (t3) at (-3*360/6-90:1) {};
 \node[int] (t4) at (-4*360/6-90:1) {};
 \node[int] (t5) at (-5*360/6-90:1) {};
 \node[int] (s) at (-2*360/6-90:.2) {};
 \node[int] (s2) at (-5*360/6-90:.2) {};
 \draw (t0) edge (t1);
 \draw (t1) edge[dotted] (t2);
 \draw (t2) edge (t3);
 \draw (t3) edge[snakeit] (t4);
 \draw (t4) edge[dotted] (t5);
 \draw (t5) edge[snakeit] (t0);
 \draw (s) edge[dotted] (t0);
 \draw (s) edge[snakeit] (t2);
 \draw (s) edge (t4);
 \draw (s2) edge[snakeit] (t1);
 \draw (s2) edge[dotted] (t3);
 \draw (s2) edge (t5);
\end{tikzpicture}
\right)+2f\left(
\begin{tikzpicture}[baseline=0ex]
 \node[int] (t0) at (-90:1) {};
 \node[int] (t1) at (-360/6-90:1) {};
 \node[int] (t2) at (-2*360/6-90:1) {};
 \node[int] (t3) at (-3*360/6-90:1) {};
 \node[int] (t4) at (-4*360/6-90:1) {};
 \node[int] (t5) at (-5*360/6-90:1) {};
 \node[int] (s) at (-2*360/6-90:.2) {};
 \node[int] (s2) at (-5*360/6-90:.2) {};
 \draw (t0) edge (t1);
 \draw (t1) edge[snakeit] (t2);
 \draw (t2) edge[dotted] (t3);
 \draw (t3) edge (t4);
 \draw (t4) edge[dotted] (t5);
 \draw (t5) edge[snakeit] (t0);
 \draw (s) edge[dotted] (t0);
 \draw (s) edge (t2);
 \draw (s) edge[snakeit] (t4);
 \draw (s2) edge[dotted] (t1);
 \draw (s2) edge[snakeit] (t3);
 \draw (s2) edge (t5);
\end{tikzpicture}
\right)
=-6-6-6+2+2+2+2=-10
\end{multline*}

Therefore, the proposition implies that $W_7^2$ can not be exact, what needed to be demonstrated.

\section{A note on the convergence of spectral sequences}\label{app:specseq}

In this appendix we discuss convergence of spectral sequences needed in this paper.

Let $\K$ be a field and $C$ a chain complex of vector spaces over $\K$:
\begin{equation*}
\begin{tikzpicture}
\matrix[diagram](m){\dots & C_{-1} & C_0 & C_1 & C_2 & \dots \\};
\draw [->] (m-1-1) edge node[above]{$d_{-2}$} (m-1-2);
\draw [->] (m-1-2) edge node[above]{$d_{-1}$} (m-1-3);
\draw [->] (m-1-3) edge node[above]{$d_0$} (m-1-4);
\draw [->] (m-1-4) edge node[above]{$d_1$} (m-1-5);
\draw [->] (m-1-5) edge node[above]{$d_2$} (m-1-6);
\end{tikzpicture}
\end{equation*}
Suppose that chain elements can be expressed as $C_q=\prod_{p=0}^\infty C_{p,q}$ and that differential $d_q\colon C_q\rightarrow C_{q+1}$ can be expressed as a direct product of $d_{p,q}^r\colon C_{p,q}\rightarrow C_{p+r,q+1}$ for $r\geq 0$, as drawn in the diagram:
\begin{equation*}
\begin{tikzpicture}
\matrix[diagram](m){\dots & C_{0,0} & C_{0,1} & C_{0,2} & \dots \\
\dots & C_{1,0} & C_{1,1} & C_{1,2} & \dots \\
\dots & C_{2,0} & C_{2,1} & C_{2,2} & \dots \\
 & \vdots & \vdots & \vdots & \\};
\draw [->] (m-1-2) edge node[above]{$\scriptstyle d_{0,0}^0$} (m-1-3);
\draw [->] (m-1-2) edge node[above]{$\scriptstyle d_{0,0}^1$} (m-2-3);
\draw [->] (m-1-2) edge (m-3-3);
\draw [->] (m-1-2) edge (m-4-3);
\draw [->] (m-1-3) edge node[above]{$\scriptstyle d_{0,1}^0$} (m-1-4);
\draw [->] (m-1-3) edge node[above]{$\scriptstyle d_{0,1}^1$} (m-2-4);
\draw [->] (m-1-3) edge (m-3-4);
\draw [->] (m-1-3) edge (m-4-4);
\draw [->] (m-2-2) edge (m-2-3);
\draw [->] (m-2-2) edge (m-3-3);
\draw [->] (m-2-2) edge (m-4-3);
\draw [->] (m-2-3) edge (m-2-4);
\draw [->] (m-2-3) edge (m-3-4);
\draw [->] (m-2-3) edge (m-4-4);
\draw [->] (m-3-2) edge (m-3-3);
\draw [->] (m-3-2) edge (m-4-3);
\draw [->] (m-3-3) edge (m-3-4);
\draw [->] (m-3-3) edge (m-4-4);
\end{tikzpicture}
\end{equation*}
Note that this defines complete filtration bounded above $F_nC$ of $C$ where $F_nC_q=\prod_{p=n}^\infty C_{p,q}$. One can set up a spectral sequence to this filtration. All spectral sequences in this paper are of this kind. We need two propositions to ensure the correct convergence of them.

\begin{prop}\label{prop:App1}
If for every $q\in\Z$ there are only finitely many $q\in\N$ such that $C_{p,q}\neq 0$, spectral sequence converges to the homology of the complex $C$.
\end{prop}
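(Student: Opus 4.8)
The plan is to reduce the statement to the classical convergence theorem for a filtration that is finite in each degree, the whole content lying in the observation that the finiteness hypothesis collapses the infinite products into finite ones. The first step is to exploit the hypothesis: if for each fixed $q$ only finitely many of the $C_{p,q}$ are non-zero, then the product $C_q=\prod_{p\ge 0}C_{p,q}$ is in fact a finite direct sum. In particular the descending filtration $C_q=F_0C_q\supseteq F_1C_q\supseteq\cdots$ with $F_nC_q=\prod_{p\ge n}C_{p,q}$ terminates: there is an integer $N_q$ with $F_nC_q=0$ for $n>N_q$. Thus, although the filtration of the total complex $C$ may be globally infinite, in every fixed cohomological degree it is a \emph{finite} filtration, $C_q=F_0C_q\supseteq\cdots\supseteq F_{N_q}C_q\supseteq F_{N_q+1}C_q=0$, and all questions of completeness become vacuous.

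With this reduction in hand I would set up the associated spectral sequence in the standard way, keeping the paper's bigrading: the zeroth page is $E_0^{p,q}=F_pC_q/F_{p+1}C_q\cong C_{p,q}$ with $d_0=d^0_{p,q}$, and the $r$-th differential is $d_r\colon E_r^{p,q}\to E_r^{p+r,\,q+1}$, raising the filtration degree $p$ by $r$ and the cohomological degree $q$ by one. The key point is then that the spectral sequence stabilizes pagewise in every bidegree. Indeed, for fixed $(p,q)$ the outgoing differential $d_r$ lands in a subquotient of $C_{p+r,q+1}$, which vanishes as soon as $p+r>N_{q+1}$, while the incoming differential $d_r$ has source a subquotient of $C_{p-r,q-1}$, which is zero once $r>p$. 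Hence for all $r>\max(p,\,N_{q+1}-p)$ both differentials touching $E_r^{p,q}$ vanish, so $E_r^{p,q}=E_{r+1}^{p,q}=\cdots=:E_\infty^{p,q}$.

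It then remains to identify the stable page with the associated graded of the homology and to assemble the extensions. Writing $F_\bullet H(C)$ for the filtration on $H(C)$ induced by $F_\bullet C$, one establishes the canonical isomorphism $E_\infty^{p,q}\cong F_pH^q(C)/F_{p+1}H^q(C)$; since the filtration on each $H^q(C)$ is a subquotient of the finite filtration $F_\bullet C_q$, it too is finite, so $H^q(C)$ is a finite iterated extension of the finitely many non-zero pieces $E_\infty^{p,q}$, which is precisely the assertion of convergence to $H(C)$. The one step requiring genuine care, and the one I expect to be the main obstacle, is this last identification $E_\infty\cong\gr H(C)$: in a context involving infinite products and completions it can a priori fail through a $\varprojlim^1$ or completion defect. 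I would therefore check explicitly that the per-degree finiteness forces the cycle-and-boundary description of $E_\infty^{p,q}$ to agree with the graded of homology, all the relevant inverse limits being eventually constant and hence exact. Once this is verified, convergence follows immediately from the finiteness of the filtration in each cohomological degree.
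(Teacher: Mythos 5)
Your proof is correct and follows the same essential route as the paper: the single substantive observation in both is that the per-degree finiteness of the $C_{p,q}$ makes the filtration bounded in each cohomological degree, after which convergence is the Classical Convergence Theorem (the paper simply cites Weibel, Theorem 5.5.1, while you unwind its proof — pagewise stabilization plus the identification $E_\infty\cong\gr H(C)$, with the potential $\varprojlim^1$ issues correctly dismissed as vacuous for a finite filtration).
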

\begin{proof}
Under condition from the proposition, the filtration is bounded, and \cite[Clasical Convergence Theorem 5.51]{Weibel} implies the result.
\end{proof}

\begin{prop}\label{prop:App2}
If $C_{p,q}$ are finitely dimensional for all $p$ and $q$, spectral sequence weakly converges to the homology of the complex $C$.
\end{prop}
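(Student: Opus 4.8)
The plan is to check that the filtration defining the spectral sequence satisfies the hypotheses of a standard complete-convergence theorem (e.g.\ the one of \cite{Weibel} underlying Proposition \ref{prop:App1}, or Boardman's conditional-convergence framework) and to extract weak convergence from it. First I would record the structural properties of the decreasing filtration $F_nC_q=\prod_{p\geq n}C_{p,q}$. It is \emph{exhaustive}, since $F_0C=C$. It is \emph{Hausdorff}, since $\bigcap_n F_nC=0$: an element whose component in every $C_{p,q}$ vanishes is itself $0$. And it is \emph{complete}, since $C_q/F_nC_q=\prod_{0\leq p<n}C_{p,q}$ with the obvious projections as transition maps, whence ${\varprojlim_n} C_q/F_nC_q=\prod_{p}C_{p,q}=C_q$. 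These three properties are exactly what is needed for the filtered complex to be conditionally convergent.

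The key step is to turn the finite-dimensionality hypothesis into \emph{regularity} of the spectral sequence. Since $E^0_{p,q}=F_pC_q/F_{p+1}C_q=C_{p,q}$ is finite-dimensional and each subsequent page $E^{r+1}_{p,q}$ is a subquotient of $E^r_{p,q}$, the sequence $r\mapsto\dim E^r_{p,q}$ is a non-increasing sequence of non-negative integers and hence eventually constant. Writing $d^r_{\mathrm{in}}\colon E^r_{p-r,q-1}\to E^r_{p,q}$ and $d^r_{\mathrm{out}}\colon E^r_{p,q}\to E^r_{p+r,q+1}$ for the incoming and outgoing differentials, the identity $\dim E^{r+1}_{p,q}=\dim E^r_{p,q}-\dim\mathrm{im}\,d^r_{\mathrm{out}}-\dim\mathrm{im}\,d^r_{\mathrm{in}}$ shows that once the dimension stabilizes both differentials touching the bidegree $(p,q)$ must vanish. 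Thus every bidegree is eventually untouched, $E^\infty_{p,q}:=E^r_{p,q}$ for $r\gg 0$ is well defined, and the spectral sequence is regular. Conditional convergence together with regularity then yields weak convergence, i.e.\ $E^\infty_{p,q}\cong F_pH_q(C)/F_{p+1}H_q(C)$ for the induced filtration $F_nH_q(C)=\mathrm{im}\big(H_q(F_nC)\to H_q(C)\big)$.

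The part I expect to require the most care is the distinction between weak and strong convergence, which is precisely why only the weaker conclusion is stated. A more hands-on route that makes the obstruction visible is to write $C={\varprojlim_N} C/F_NC$, apply Proposition \ref{prop:App1} to each quotient $C/F_NC$ (whose induced filtration is bounded), and assemble via the Milnor sequence $0\to{\varprojlim_N}^1 H_{q-1}(C/F_NC)\to H_q(C)\to{\varprojlim_N} H_q(C/F_NC)\to 0$. Here finite-dimensionality guarantees that each $H_q(C/F_NC)$ is finite-dimensional, so the tower is automatically Mittag--Leffler and the ${\varprojlim}^1$ term vanishes, identifying $H_q(C)$ with ${\varprojlim_N} H_q(C/F_NC)$. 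What this does \emph{not} force, however, is that the filtration $F_nH_q(C)$ be Hausdorff: a class may be represented by cycles lying in $F_nC$ for every $n$ yet not by a single cycle in $\bigcap_n F_nC=0$, so $\bigcap_n F_nH_q(C)$ can be nonzero. This failure---controlled by a ${\varprojlim}^1$ of approximate boundaries rather than by the finite-dimensional $E^\infty$ terms---is exactly the reason the statement asserts only weak convergence, and it is the point I would scrutinize most carefully.
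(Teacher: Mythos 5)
Your argument is correct and follows essentially the same route as the paper: finite-dimensionality of the $C_{p,q}$ forces the pages $E^r_{p,q}$ to stabilize, hence the spectral sequence is regular, and then Weibel's Complete Convergence Theorem (for a complete exhaustive filtration with a regular spectral sequence) gives weak convergence. The paper's proof is exactly this two-step citation; your dimension-count for regularity and the Milnor-sequence remark on why only \emph{weak} convergence can be asserted are correct elaborations of the same argument.
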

\begin{proof}
One can see that the finite dimensionality of $C_{p,q}$ implies regularity of the spectral sequence (\cite[Definition 5.2.10]{Weibel}). Then \cite[Complete Convergence Theorem 5.5.10]{Weibel} implies the result.
\end{proof}

\bibliographystyle{plain}

\begin{thebibliography}{1}
\bibitem{AroneLV}
Gregory Arone, Pascal Lambrechts, and Ismar Voli{\'c}.
\newblock Calculus of functors, operad formality, and rational homology of
  embedding spaces.
\newblock {\em Acta Math.}, 199(2):153--198, 2007.

\bibitem{Turchin2}
Gregory Arone and Victor Turchin.
\newblock On the rational homology of high-dimensional analogues of spaces of
  long knots.
\newblock {\em Geom. Topol.}, 18(3):1261--1322, 2014.

%
\bibitem{Turchin3}
G.~{Arone} and V.~{Turchin}.
\newblock {Graph-complexes computing the rational homotopy of high dimensional
  analogues of spaces of long knots}.
\newblock {\em to appear in Annales de l'Institut Fourier}.
%

\bibitem{barnatan}
Dror Bar-Natan.
\newblock On the Vassiliev knot invariants.
\newblock \emph{Topology}, 34(2):423--472, 1995.

\bibitem{barnatanmk}
Dror Bar-Natan and Brendan McKay.
\newblock Graph Cohomology - An Overview and Some Computations.
\newblock unpublished, available at \url{http://www.math.toronto.edu/~drorbn/papers/GCOC/GCOC.ps}.

\bibitem{brown}
Francis Brown.
\newblock Mixed {T}ate motives over {$\mathbb Z$}.
\newblock {\em Ann. of Math. (2)}, 175(2):949--976, 2012.

\bibitem{CullerVogtmann}
Marc Culler and Karen Vogtmann.
\newblock Moduli of graphs and automorphisms of free groups.
\newblock {\em Invent. Math.}, 84(1):91--119, 1986.

\bibitem{vastwisting}
Vasily Dolgushev and Thomas Willwacher.
\newblock Operadic Twisting -- with an application to Deligne's conjecture.
\newblock {\em J. Pure Appl. Alg.} 219(5):1349--1428 (2015).

\bibitem{kneissler}
Jan Kneissler.
\newblock The number of primitive Vassiliev invariants up to degree 12.
\newblock arXiv:q-alg/9706022

\bibitem{kneissler2}
Jan Kneissler.
\newblock On spaces of connected graphs II: Relations in the algebra $\Lambda$. 
\newblock \emph{Knots in Hellas '98, Vol. 3 (Delphi)}.
\newblock \emph{ J. Knot Theory Ramifications}, 10(5):667--674, 2001.

\bibitem{K3}
Maxim Kontsevich.
\newblock Formality {C}onjecture.
\newblock {\em Deformation Theory and Symplectic Geometry}, pages 139--156,
  1997.
\newblock D. Sternheimer et al. (eds.).

\bibitem{K4}
Maxim Kontsevich.
\newblock Formal (non)commutative symplectic geometry.
\newblock Proceedings of the I. M. Gelfand seminar 1990-1992, 173--188, Birkhauser, 1993.

\bibitem{K5}
Maxim Kontsevich.
\newblock Feynman diagrams and low-dimensional topology.
\newblock {\em Progr. Math.}, 120:97--121, 1994.
\newblock First European Congress of Mathematics, Vol. II, (Paris, 1992).



\bibitem{LambrechtsTurchin}
Pascal Lambrechts and Victor Turchin.
\newblock Homotopy graph-complex for configuration and knot spaces.
\newblock {\em Trans. Amer. Math. Soc.}, 361(1):207--222, 2009.

\bibitem{lodayval}
J.-L. Loday and B. Vallette.
\newblock Algebraic operads.
\newblock {\em Grundlehren Math. Wiss.}, 346, Springer, Heidelberg, 2012. 

\bibitem{penner}
R.~C. Penner.
\newblock {The decorated Teichm\"uller space of punctured surfaces}.
\newblock {\em Comm. Math. Phys.}, 113(2):299--339, 1987.

\bibitem{carlothomas}
Carlo Rossi and Thomas Willwacher.
\newblock P.~{E}tingof's conjecture about {D}rinfeld associators.
\newblock  arXiv:1404.2047 (2014).


\bibitem{Turchin1}
Victor Turchin.
\newblock Hodge-type decomposition in the homology of long knots.
\newblock {\em J. Topol.}, 3(3):487--534, 2010.

\bibitem{vogel}
Pierre Vogel.
\newblock Algebraic structures on modules of diagrams.
\newblock \emph{J. Pure Appl. Algebra}, 215(6): 1292--1339, 2011. 

\bibitem{Weibel}
Charles~A. Weibel.
\newblock {\em An introduction to Homological Algebra}.
\newblock Cambridge University Press, Cambridge, 1994.

\bibitem{grt}
Thomas Willwacher.
\newblock {M. Kontsevich's graph complex and the Grothendieck-Teichm\"uller Lie
  algebra}.
\newblock arxiv:1009.1654, to appear in Invent. Math. (2015).

\bibitem{TW}
Victor Tourtchine and Thomas Willwacher.
\newblock Relative (non-)formality of the little cubes operads and the algebraic Schoenflies theorem.
\newblock arXiv:1409.0163, 2014.

\bibitem{gcor}
Thomas Willwacher.
\newblock The oriented graph complexes.
\newblock {\em Communications in Mathematical Physics}, 2014.
\newblock \url{http://dx.doi.org/10.1007/s00220-014-2168-9}.

\bibitem{eulerchar}
Thomas Willwacher and Marko \v{Z}ivkovi\'c.
\newblock {The Dimensions and Euler Characteristics of M. Kontsevich's graph
  complexes}.
\newblock {\em Adv. Math.} 272 (2015), 553--578.

\end{thebibliography}

\end{document}